\newtheorem{lemma}{{\sc Lemma}}[section]
\newtheorem{corollary}[lemma]{{\sc Corollary}}
\newtheorem{proposition}[lemma]{{\sc Proposition}}
\newtheorem{theorem}[lemma]{{\sc Theorem}}
\theoremstyle{definition}
\newtheorem{remark}[lemma]{{\sc Remark}}
\numberwithin{equation}{section}
\def\Gb{{\mathfrak{b}}}
\def\Gc{{\mathfrak{c}}}
\def\Gg{{\mathfrak{g}}}
\def\Gh{{\mathfrak{h}}}
\def\Gn{{\mathfrak{n}}}
\def\GM{{\mathfrak{M}}}
\def\GN{{\mathfrak{N}}}
\def\BC{{\mathbf{C}}}
\def\BF{{\mathbf{F}}}
\def\BQ{{\mathbf{Q}}}
\def\BZ{{\mathbf{Z}}}
\def\CB{{\mathcal B}}
\def\DD{{\mathcal D}}
\def\CO{{\mathcal O}}
\def\CL{{\mathcal L}}
\def\ad{{\mathop{\rm ad}\nolimits}}
\def\Comod{\mathop{\rm Comod}\nolimits}
\def\deru{\partial}
\def\End{\mathop{\rm{End}}\nolimits}
\def\eq{\mathop{\rm eq}\nolimits}
\def\For{{\mathop{\rm For}\nolimits}}
\def\gr{{\mathrm gr}}
\def\Hom{\mathop{\rm Hom}\nolimits}
\def\id{\mathop{\rm id}\nolimits}
\def\Id{\mathop{\rm Id}\nolimits}
\def\Ind{{\mathop{\rm Ind}\nolimits}}
\def\inte{{\mathop{\rm int}\nolimits}}
\def\Image{\mathop{\rm Im}\nolimits}
\def\Ker{\mathop{\rm Ker\hskip.5pt}\nolimits}
\def\Mod{\mathop{\rm Mod}\nolimits}
\def\op{{\mathop{\rm op}\nolimits}}
\def\Proj{\mathop{\rm Proj}\nolimits}
\def\res{{\mathop{\rm res}\nolimits}}
\def\simto{\xrightarrow{\sim}}
\def\Tor{{\rm{Tor}}}
\def\eq{{\rm{eq}}}
\def\tU{\widetilde{U}}
\def\tGamma{{\Breve{\Gamma}}}
\def\Df{D^{f}}
\def\tDD{{\widetilde\DD}}
\def\tDDf{{\widetilde\DD}^f}
\def\zDf{{}^0\!D^f}
\def\iDf{{}^1\!D^f}
\def\tDDzf{{}^0{\widetilde\DD}^{f}}
\def\EN{{\overline{E}_R^{N_q}}}
\def\Uac{\triangleleft}
\def\rac{\blacktriangleleft}
\def\lac{\blacktriangleright}
\def\Loc{\CL oc}
\def\tchi{\tilde{\chi}}
\def\Uf{U_q^f}
\def\Ue{U_q^e}
\begin{document}
\title[$D$-modules on a quantized flag manifold]
{
Categories of $D$-modules \\
on a quantized flag manifold}
\author{Toshiyuki TANISAKI}
\subjclass[2020]{Primary: 20G42, Secondary: 17B37}
\begin{abstract}
There are two approaches in defining the category of $D$-modules on a quantized flag manifold.
One is due to Lunts and Rosenberg based on the $\Proj$-construction of the quantized flag manifold, 
and the other is due to Backelin and Kremnizer 
using equivariant $D$-modules on the corresponding quantized algebraic group.
In this paper we compare the two approaches.

\end{abstract}
\maketitle
\section{Introduction}
\subsection{}
Let $G$ be a connected, simply-connected simple algebraic group over the complex number field $\BC$, and let $B$ be a Borel subgroup of $G$.
Representations of the Lie algebra $\Gg$ of $G$ are realized using $D$-modules on the flag manifold $\CB=B\backslash G$ 
through the works of Beilinson and Bernstein  \cite{BB} and Brylinski and Kashiwara \cite{BrK}.
In this paper we are concerned with the analogue of this theory for quantum groups.
\subsection{}
The quantized flag manifold $\CB_q$ is an non-commutative algebraic variety.
By a general philosophy of the non-commutative geometry, 
giving a non-commutative algebraic variety $X$ is 
the same as giving
a category 
(denoted by $\Mod(\CO_{X})$) which can be regarded as  the category of quasi-coherent sheaves on $X$.

One way to define $\Mod(\CO_{\CB_q})$ is to use 
the $\Proj$-construction.
Note that the ordinary flag manifold $\CB$ is a projective algebraic variety
written as $\CB=\Proj(A)$
for a graded (commutative) algebra 
$A=O(N\backslash G)$,
where $N$ is the unipotent radical of $B$ and 
$O(N\backslash G)$ is the  subalgebra of the affine coordinate algebra $O(G)$ of $G$ consisting of 
left $N$-invariant functions.
Using the quantized enveloping algebra $U_q(\Gg)$ 
we can construct  natural quantum analogues  $A_q=O(N_q\backslash G_q)\subset O(G_q)$ of $A=O(N\backslash G)\subset O(G)$.
Then the theory of non-commutative projective schemes,
which was  developed by Artin and Zhang \cite{AZ}, Ver\"{e}vkin \cite{V}, Rosenberg \cite{R} following Manin's idea \cite{M},
allows us to define a category 
$\Mod(\CO_{\CB_q})$ using 
the non-commutative graded ring 
$A_q$, by which we can write
$\CB_q=\Proj(A_q)$
(see Lunts and Rosenberg \cite{LR}).

Another way to construct $\Mod(\CO_{\CB_q})$ was given by Backelin and Kremnizer.
Let $p:G\to\CB$ be the natural morphism.
The pull-back functor 
$p^*:\Mod(\CO_\CB)\to\Mod(\CO_G)$ induces the equivalence
$
\Mod(\CO_\CB)\cong\Mod(\CO_G,B)
$
of categories, where $\Mod(\CO_G,B)$ denotes the category of $B$-equivariant quasi-coherent $\CO_G$-modules.
Since $G$ is an affine algebraic variety, $\Mod(\CO_G,B)$ is naturally identified with the category 
 of $O(G)$-modules equipped with an $O(B)$-comodule structure satisfying a certain compatibility condition.
Using the quantum analogues $O(G_q)$, 
$O(B_q)$ of $O(G)$, $O(B)$
we obtain a natural quantum analogue 
$\Mod(\CO_{G_q},B_q)$ of the category 
$\Mod(\CO_{G},B)$.
In fact Backelin and Kremnizer have shown the equivalence 
\begin{equation}
\Mod(\CO_{\CB_q})\cong\Mod(\CO_{G_q},B_q)
\end{equation}
of categories
using a characterization of $\Mod(\CO_{\CB_q})$ by Aritin and Zhang \cite{AZ}.
\subsection{}
The main theme of this paper is the theory of $D$-modules on the quantized flag manifold.
We will compare the two approaches, 
one by Lunts and Rosenberg \cite{LR} 
and the other by Backelin and Kremnizer 
\cite{BK}.
We point out that there is  also another  earlier approach by Joseph \cite{Jo0}.

Lunts and Rosenberg \cite{LRD} developed a theory of $D$-modules on a general non-commutative projective scheme $X=\Proj(R)$.
Using a certain ring of differential operators $D^\dagger(R)\subset\End(R)$, 
they defined a category $\Mod(\DD^\dagger_{X,\gamma})$, which can be regarded as an analogue of the category of modules over the ring of twisted differential operators, 
where $\gamma$ is a datum for the twist.
In \cite{LR} they applied it to $\CB_q$, 
and considered the category 
$\Mod(\DD^\dagger_{\CB_q,\lambda})$
for integral weights $\lambda$.
This category is related to $U_q(\Gg)$-modules via the global section functor 
\begin{equation}
\label{eq:IGamma1}
\Gamma:\Mod(\DD^\dagger_{\CB_q,\lambda})
\to
\Mod(U_q(\Gg),\lambda),
\end{equation}
where $\Mod(U_q(\Gg),\lambda)$ denotes the category of $U_q(\Gg)$-modules with central character associated to 
$\lambda$.
They conjectured that \eqref{eq:IGamma1} gives an equivalence when $\lambda$ is dominant.

In \cite{T0} we 
considered a modified category 
$\Mod(\DD_{\CB_q,\lambda})$
using
a subring
$D(A_q)$ of $D^\dagger(A_q)$ 
generated by the left multiplication of elements of $A_q$, the natural action of $U_q(\Gg)$, and the grading operators.
We have also the global section functor
\begin{equation}
\label{eq:IGamma2}
\Gamma:\Mod(\DD_{\CB_q,\lambda})
\to
\Mod(U_q(\Gg),\lambda).
\end{equation}
We proved that 
that \eqref{eq:IGamma2} gives an equivalence when $\lambda$ is dominant.

On the other hand Backelin and Kremnizer 
\cite{BK} developed the theory of $D$-modules on the quantized flag manifold
based on  $\Mod(\CO_{G_q}, B_q)$.
We have a natural $U_q(\Gg)$-bimodule structure of $O(G_q)$, where the left 
(resp.\ right) action of $U_q(\Gg)$ is the quantum analogue of the left 
(resp.\ right) action of $U(\Gg)$ on $O(G)$ induced by the right 
(resp.\ left) multiplication of $G$ on $G$.
Let $D(G_q)$ be  the ring of differential operators acting on $O(G_q)$
generated by the left multiplication of elements of $O(G_q)$ and the right action of $U_q(\Gg)$.
For an integral weight $\lambda$ Backelin and Kremnizer considered the category $\Mod(\DD_{G_q},B_q,\lambda)$ 
consisting of $D(G_q)$-modules equipped with an $O(B_q)$-comodule structure satisfying certain conditions.
Let $\Uf(\Gg)$ be the subalgebra of $U_q(\Gg)$ consisting of $\ad$-finite elements, and let 
$\Mod(\Uf(\Gg),\lambda)$ be the category of $\Uf(\Gg)$-modules with central character associated to $\lambda$.
We have a functor
\begin{equation}
\label{eq:BKBB}
(\bullet)^{B_q}:
\Mod(\DD_{G_q},B_q,\lambda)
\to\Mod(\Uf(\Gg),\lambda)
\end{equation}
sending $M\in \Mod(\DD_{G_q},B_q,\lambda)$ to the subspace $M^{B_q}$ 
of $M$ consisting of $O(B_q)$-coinvariant elements.
Here the action of $\Uf(\Gg)$ on $M^{B_q}$  is induced by the ring homomorphism $\Uf(\Gg)\to D(G_q)$
given by the left (not right!) action of $\Uf(\Gg)$ on $O(G_q)$.

In this paper we 
consider the category 
$\Mod(\DD^f_{\CB_q,\lambda})$ 
defined similarly to 
$\Mod(\DD_{\CB_q,\lambda})$, 
using a subring 
$\Df(A_q)$ of $D(A_q)$ 
generated by left multiplication of elements of $A_q$, the natural action of $\Uf(\Gg)$, and the grading operators.
We have the global section functor
\begin{equation}
\label{eq:IGamma3}
\Gamma:\Mod(\DD^f_{\CB_q,\lambda})
\to
\Mod(\Uf(\Gg),\lambda).
\end{equation}
Our main theorem is the equivalence
\begin{equation}
\label{eq:I-equiv}
\Mod(\DD^f_{\CB_q,\lambda})
\cong
\Mod(\DD_{G_q},B_q,\lambda)
\end{equation}
of categories for any integral weights $\lambda$.
Similarly to the case of 
$\Mod(\DD_{\CB_q,\lambda})$ 
we can show
that \eqref{eq:IGamma3} gives an equivalence when $\lambda$ is dominant.
Since the equivalence \eqref{eq:I-equiv} 
is compatible with \eqref{eq:BKBB} and \eqref{eq:IGamma3}, 
this recovers  the equivalence of \eqref{eq:BKBB} 
for dominant integral weights $\lambda$
in \cite{BK}.

\subsection{}
\label{subsec:notation}
For a ring $R$ we denote by $R^\op$ the ring opposite to it.
Namely, we have $R^\op=\{r^\circ\mid r\in R\}$ with
$r^\circ_1+r^\circ_2=(r_1+r_2)^\circ$, 
$r^\circ_1r^\circ_2=(r_2r_1)^\circ$ for
$r_1, r_2\in R$.

For a ring $R$ we denote by $\Mod(R)$ the category of left $R$-modules.

For a graded ring $R$ graded by an abelian group $\Gamma$ we denote by 
$\Mod_\Gamma(R)$ the category of $\Gamma$-graded left $R$-modules.
Namely, an object of $\Mod_\Gamma(R)$ 
is a left $R$-module $M$ equipped with the decomposition $M=\bigoplus_{\gamma\in\Gamma}M(\gamma)$ satisfying 
$R(\gamma)M(\delta)\subset M(\gamma+\delta)$ for $\gamma, \delta\in\Gamma$, 
and for $M, N\in\Mod_\Gamma(R)$ a morphism from $M$ to $N$ is a homomorphism $f:M\to N$ of $R$-modules satisfying $f(M(\gamma))\subset N(\gamma)$ for any $\gamma\in\Gamma$.
For $M, N\in\Mod_\Gamma(R)$ 
we denote  by $\Hom_{R}^\gr(M,N)$ the set of morphisms from $M$ to $N$.

For $\gamma\in\Gamma$ we define a functor 
\[
(\bullet)[\gamma]:\Mod_\Gamma(R)\to \Mod_\Gamma(R)
\qquad(M\mapsto M[\gamma])
\]
by 
$(M[\gamma])(\delta)=M(\gamma+\delta)$ for $\delta\in \Gamma$.

For a Hopf algebra $H$ we use Sweedler's notation 
\[
\Delta(h)=\sum_{(h)}h_{(0)}\otimes h_{(1)}
\]
for the 
comultiplication $\Delta:H\to H\otimes H$.
\section{Quantum groups}
\subsection{}
Let $G$ be a connected, simply-connected simple algebraic group over the complex number field $\BC$, 
and let $H$ be a maximal torus of $G$.
We denote by $\Gg$, $\Gh$ the Lie algebras of $G$ and $H$ respectively.
Let $\Delta\subset \Gh^*$ be the set of roots, and 
let $W\subset GL(\Gh^*)$ be the Weyl group.
We denote by 
$(\,,\,)$ the $W$-invariant symmetric bilinear form on $\Gh^*$ such that $(\alpha,\alpha)=2$ for any short root $\alpha\in\Delta$.
For $\alpha\in\Delta$ we set $\alpha^\vee=2\alpha/(\alpha,\alpha)$.
Let $\Lambda\subset\Gh^*$ and $Q\subset \Gh^*$ be the weight lattice and the root lattice respectively.
We fix a set of simple roots $\{\alpha_i\mid i\in I\}$ and denote by $\Delta^+$ the  set of positive roots.
We set 
\[
Q^+=\sum_{\alpha\in\Delta^+}\BZ_{\geqq0}\alpha,
\qquad
\Lambda^+=
\{\lambda\in\Lambda\mid(\lambda,\alpha^\vee)\geqq0\;(\forall\alpha\in\Delta^+)\}.
\]
We define subalgebras $\Gn$, $\Gn^+$, $\Gb$, $\Gb^+$ of $\Gg$ by
\[
\Gn=\bigoplus_{\alpha\in\Delta^+}\Gg_{-\alpha},
\qquad
\Gn^+=\bigoplus_{\alpha\in\Delta^+}\Gg_{\alpha},
\qquad
\Gb=\Gh\oplus\Gn,
\qquad
\Gb^+=\Gh\oplus\Gn^+,
\]
where $\Gg_\alpha$ for $\alpha\in\Delta$ denotes the corresponding root subspace.
We denote by $N$, $B$ the closed subgroups of $G$  corresponding to $\Gn$, $\Gb$ respectively.
\subsection{}
For $i\in I$ we set $d_i=(\alpha_i,\alpha_i)/2$, 
and for $i, j\in I$ we set
$a_{ij}=(\alpha_j,\alpha_i^\vee)$.
We fix a positive integer $N$ such that $N(\Lambda,\Lambda)\subset\BZ$.
The quantized enveloping algebra $U_q(\Gg)$ is the associative algebra over the rational function field
\begin{equation}
\BF=\BQ(q^{1/N})
\end{equation}
generated by the elements
\[
k_\lambda\;\;(\lambda\in\Lambda),
\qquad
e_i, \; f_i\;\;(i\in I)
\]
satisfying the relations
\begin{align}
&k_0=1,\qquad
k_\lambda k_\mu=k_{\lambda+\mu}
&(\lambda,\mu\in\Lambda),
\\
&k_\lambda e_i=
q_i^{(\lambda,\alpha^\vee_i)}e_ik_\lambda,
\qquad
k_\lambda f_i=
q_i^{-(\lambda,\alpha^\vee_i)}f_ik_\lambda
&(\lambda\in\Lambda, i\in I),
\\
&
e_if_j-f_je_i=\delta_{ij}\frac{k_i-k_i^{-1}}{q_i-q_i^{-1}}
&(i, j\in I),
\\
&
\sum_{r=0}^{1-a_{ij}}
(-1)^r
\begin{bmatrix}
1-a_{ij}
\\
r
\end{bmatrix}_{q_i}
e_i^{1-a_{ij}-r}e_je_i^r=0
&(i, j\in I, \; i\ne j),
\\
&
\sum_{r=0}^{1-a_{ij}}
(-1)^r
\begin{bmatrix}
1-a_{ij}
\\
r
\end{bmatrix}_{q_i}
f_i^{1-a_{ij}-r}f_jf_i^r=0
&(i, j\in I, \; i\ne j).
\end{align}
Here, $q_i=q^{d_i}$, $k_i=k_{\alpha_i}$ for $i\in I$, 
and 
\[
\begin{bmatrix}
m
\\
n
\end{bmatrix}_{t}
=
\prod_{a=1}^{n}
\frac{t^{m+1-a}-t^{-(m+1-a)}}{t^a-t^{-a}}
\in\BZ[t,t^{-1}]
\]
for $m, n\in \BZ$ with $m\geqq n\geqq0$.
We have a natural Hopf algebra structure of $U_q(\Gg)$ given by 
\begin{align}
&\Delta(k_\lambda)=k_\lambda\otimes k_\lambda,
\quad
\Delta(e_i)=e_i\otimes 1+k_i\otimes e_i,
\quad
\Delta(f_i)=f_i\otimes k_i^{-1}+1\otimes f_i,
\\
&
\varepsilon(k_\lambda)=1,
\quad 
\varepsilon(e_i)=\varepsilon(f_i)=0,
\\
&S(k_\lambda)=k_{-\lambda},
\quad
S(e_i)=-k_i^{-1}e_i,
\quad
S(f_i)=-f_ik_i
\end{align}
for 
$\lambda\in\Lambda$, $i\in I$.
We define subalgebras $U_q(\Gh)$, $U_q(\Gn)$, $U_q(\Gb)$, $U_q(\Gn^+)$, $U_q(\Gb^+)$ of $U_q(\Gg)$ by
\begin{gather*}
U_q(\Gh)=\langle k_\lambda\mid\lambda\in\Lambda\rangle,
\\
U_q(\Gn)=\langle f_i\mid i\in I\rangle,
\qquad
U_q(\Gb)=\langle k_\lambda, f_i\mid \lambda\in\Lambda,  i\in I\rangle,
\\
U_q(\Gn^+)=\langle e_i\mid i\in I\rangle,
\qquad
U_q(\Gb^+)=\langle k_\lambda, e_i\mid \lambda\in\Lambda,  i\in I\rangle.
\end{gather*}
We also use 
\[
\tilde{U}_q(\Gn)=S(U_q(\Gn)).
\]
Note that $U_q(\Gh)$, $U_q(\Gb)$ and $U_q(\Gb^+)$ are Hopf subalgebras of $U_q(\Gg)$.
We define the adjoint action of $U_q(\Gg)$ on $U_q(\Gg)$ by 
\[
\ad(x)(y)=\sum_{(x)}x_{(0)}y(Sx_{(1)})
\qquad(x, y\in U_q(\Gg)).
\]
We have
\[
U_q(\Gh)=\bigoplus_{\lambda\in\Lambda}\BF k_\lambda.
\]
For $\lambda\in\Lambda$ we define a character 
\[
\chi_\lambda:U_q(\Gh)\to\BF
\]
by $\chi_\lambda(k_\mu)=q^{(\lambda,\mu)}
=(q^{1/N})^{N(\lambda,\mu)}$ for $\mu\in\Lambda$.
For $\gamma\in Q^+$ we set
\begin{align*}
U_q(\Gn)_{-\gamma}=&
\{x\in U_q(\Gn)\mid
\ad(h)(x)=\chi_{-\gamma}(x)
\;\;(\forall h\in U_q(\Gh))\},
\\
\tilde{U}_q(\Gn)_{-\gamma}=&
\{x\in \tilde{U}_q(\Gn)\mid
\ad(h)(x)=\chi_{-\gamma}(x)
\;\;(\forall h\in U_q(\Gh))\}.
\end{align*}
Then we have
\[
U_q(\Gn)=
\bigoplus_{\gamma\in Q^+}U_q(\Gn)_{-\gamma},
\qquad
\tilde{U}_q(\Gn)=
\bigoplus_{\gamma\in Q^+}\tilde{U}_q(\Gn)_{-\gamma}
\]
with
$
\tilde{U}_q(\Gn)_{-\gamma}
=k_\gamma
U_q(\Gn)_{-\gamma}.
$
The multiplication of $U_q(\Gg)$ induces isomorphisms
\begin{gather*}
U_q(\Gg)\cong U_q(\Gn)\otimes U_q(\Gh)\otimes U_q(\Gn^+),
\\
U_q(\Gb)\cong U_q(\Gh)\otimes U_q(\Gn)
\cong U_q(\Gn)\otimes U_q(\Gh)
\end{gather*}
of $\BF$-modules.
\subsection{}
For a left 
(resp.\ right) $U_q(\Gh)$-module $V$ and $\lambda\in\Lambda$ we set
\begin{gather*}
V_\lambda=\{v\in V\mid hv=\chi_\lambda(h)v
\;(h\in U_q(\Gh))\}
\\
(\text{resp.}\;\;
V_\lambda=\{v\in V\mid vh=\chi_\lambda(h)v
\;(h\in U_q(\Gh))\}).
\end{gather*}
We say that a left (or right) $U_q(\Gh)$-module $V$ is a weight module if $V=\bigoplus_{\lambda\in \Lambda}V_\lambda$.
Let $\Gc=\Gh$, $\Gb$ or $\Gg$.
We say that a left $U_q(\Gc)$-module $V$ is in integrable if 
it is a weight module as a $U_q(\Gh)$-module and 
$\dim U_q(\Gc)v<\infty$ for any $v\in V$.
We define the notion of an integrable right $U_q(\Gc)$-module similarly.
We denote by $\Mod_\inte(U_q(\Gc))$
(resp.\ $\Mod^r_\inte(U_q(\Gc))$) 
the category of integrable left
(resp.\ right) $U_q(\Gc)$-modules.

For $\lambda\in \Lambda^+$ we define a left $U_q(\Gg)$-module $V(\lambda)$ by
\begin{equation}
V(\lambda)=U_q(\Gg)/
\left(
\sum_{h\in U_q(\Gh)}U_q(\Gg)(h-\chi_\lambda(h))
+
\sum_{i\in I}U_q(\Gg)e_i
+
\sum_{i\in I}U_q(\Gg)f_i^{\langle\lambda,\alpha_i^\vee\rangle+1}
\right).
\end{equation}
Then $V(\lambda)$ is a finite dimensional irreducible $U_q(\Gg)$-module.
Moreover, any finite dimensional irreducible $U_q(\Gg)$-module is isomorphic to some $V(\lambda)$.
It is known that a $U_q(\Gg)$-module is integrable if and only if it is a sum of $V(\lambda)$'s for $\lambda\in\Lambda^+$.

For $\lambda\in\Lambda^+$ set
\[
V^*(\lambda)=\Hom_\BF(V(\lambda),\BF).
\]
It is a finite dimensional irreducible right $U_q(\Gg)$-module with respect to the right $U_q(\Gg)$-module structure given by
\[
\langle v^* x,v\rangle=\langle v^*, xv\rangle
\qquad
(v^*\in V^*(\lambda), v\in V(\lambda), x\in U_q(\Gg)).
\]
\subsection{}
Set 
\[
\Ue(\Gh)=\bigoplus_{\lambda\in\Lambda}\BF k_{2\lambda}
\subset 
U_q(\Gh).
\]
We define a twisted action of $W$ on $\Ue(\Gh)$ 
by
\begin{equation}
w\circ k_{2\lambda}=q^{2(w\lambda-\lambda,\rho)}k_{2w\lambda}
\qquad(\lambda\in\Lambda),
\end{equation}
where $\rho\in\Lambda$ is defined by $\langle\rho,\alpha_i^\vee\rangle=1$ for any $i\in I$.

Denote by $Z(U_q(\Gg))$ the center of $U_q(\Gg)$.

The following fact is well-known.
\begin{proposition}
The composite of 
\[
Z(U_q(\Gg))\hookrightarrow
U_q(\Gg)
\cong 
U_q(\Gn)\otimes U_q(\Gh)\otimes U_q(\Gn^+)
\xrightarrow{\varepsilon\otimes1\otimes\varepsilon}
U_q(\Gh)
\]
is an injective algebra homomorphism whose image is
\[
\Ue(\Gh)^{W\circ}
=\{
h\in \Ue(\Gh)\mid w\circ h=h\;(w\in W)\}.
\]
\end{proposition}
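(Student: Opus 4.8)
The plan is to identify the Harish-Chandra-type homomorphism explicitly and then show its image is exactly the Weyl-invariants. First I would set up notation: write $p\colon U_q(\Gg)\to U_q(\Gh)$ for the composite map in the statement, so that for $z\in U_q(\Gg)$ with triangular decomposition $z=\sum f_\gamma h_\gamma e_\gamma$ (in $U_q(\Gn)\otimes U_q(\Gh)\otimes U_q(\Gn^+)$) we have $p(z)=\sum \varepsilon(f_\gamma)h_\gamma\varepsilon(e_\gamma)$, which picks off the component lying in $1\otimes U_q(\Gh)\otimes 1$. It is routine that $p$ is linear and that its restriction to $Z(U_q(\Gg))$ is an algebra homomorphism: if $z,z'\in Z(U_q(\Gg))$ then modulo the left ideal $U_q(\Gg)U_q(\Gn^+)_{>0}$ and the right ideal generated by $U_q(\Gn)_{<0}$ one checks $zz'\equiv p(z)p(z')$, using that $U_q(\Gn)_{<0}U_q(\Gh)U_q(\Gn^+)_{>0}$ together with $U_q(\Gn)_{<0}U_q(\Gh)$ and $U_q(\Gh)U_q(\Gn^+)_{>0}$ span a complement to $U_q(\Gh)$.

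Next I would establish that $p|_{Z(U_q(\Gg))}$ is injective. The standard argument is via Verma modules: for $\lambda\in\Lambda$ let $M(\lambda)$ be the Verma module with highest weight $\lambda$ and highest weight vector $v_\lambda$. Then any central $z$ acts on $M(\lambda)$ by a scalar, and that scalar equals $\chi_\lambda(p(z))$, because $z v_\lambda = p(z)v_\lambda$ (the $U_q(\Gn^+)_{>0}$ part kills $v_\lambda$ and the $U_q(\Gn)_{<0}$ part, if nonzero, would produce a lower weight vector, contradicting that $zv_\lambda$ is a highest weight vector of weight $\lambda$). If $p(z)=0$ then $z$ acts by $0$ on every $M(\lambda)$, $\lambda\in\Lambda$; since the Verma modules for $\lambda\in\Lambda^+$ (say) have as subquotients all the finite-dimensional $V(\mu)$, and these separate points of $U_q(\Gg)$ in the sense that $\bigcap_\mu \operatorname{Ann}(V(\mu))=0$, one concludes $z=0$. (Alternatively one uses that $U_q(\Gg)$ acts faithfully on $\bigoplus_\lambda M(\lambda)$.)

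For the image, I would argue inclusion in both directions. For $Z(U_q(\Gg))\subset p^{-1}(\Ue(\Gh))$ first: a central element commutes with all $k_\mu$, and a short computation with the commutation relations $k_\mu f_\gamma = q^{-(\mu,\gamma)}f_\gamma k_\mu$ forces the $U_q(\Gh)$-component to lie in the span of $k_{2\lambda}$ — more precisely, writing $z=\sum_\gamma f_\gamma h_\gamma e_\gamma$ graded by $Q^+\times Q^+$, centrality under $U_q(\Gh)$ forces each surviving term to have matching weights, and for the $\gamma=0$ term this is automatic, but the constraint that $z$ also commutes with $e_i,f_i$ pins down $p(z)\in\Ue(\Gh)$; the $W\circ$-invariance of $p(z)$ then comes from comparing the central character on $M(\lambda)$ and on $M(w\cdot\lambda)$, which share the same central character (linkage), giving $\chi_\lambda(p(z)) = \chi_{w\cdot\lambda}(p(z))$ for all $\lambda\in\Lambda$ and all $w\in W$, which by the definition of the twisted action $w\circ$ and density of characters is equivalent to $w\circ p(z)=p(z)$. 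Conversely, to see $\Ue(\Gh)^{W\circ}\subset \operatorname{image}(p)$, I would invoke the Harish-Chandra isomorphism for $U_q(\Gg)$ in the form already in the literature (this is the ``well-known'' part): one constructs enough central elements — e.g.\ from the quantum Casimir and, more systematically, from traces on the finite-dimensional modules $V(\lambda)\otimes V^*(\lambda)$ — to show the image has the right ``size'', and a dimension/character count in each weight graded piece matches $\dim \Ue(\Gh)^{W\circ}$. The main obstacle is this last surjectivity onto the invariants: injectivity and the inclusion into the invariants are formal, but producing central elements realizing every $W\circ$-invariant requires either the explicit quantum Harish-Chandra construction or a separation-of-points argument using the completeness of the characters $\{\chi_\lambda\}_{\lambda\in\Lambda^+}$ on $\Ue(\Gh)^{W\circ}$ — I would cite Joseph--Letzter or the corresponding section of the standard references rather than reprove it here.
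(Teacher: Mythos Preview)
The paper does not prove this proposition at all: it is introduced with ``The following fact is well-known'' and no argument is given. So there is no proof in the paper to compare against; your sketch is in effect supplying what the paper omits, and it follows the standard route (Harish-Chandra projection, Verma-module central characters for injectivity and $W\circ$-invariance, literature for surjectivity onto the invariants). That outline is sound, and you are right that surjectivity onto $\Ue(\Gh)^{W\circ}$ is the step where one typically cites Joseph--Letzter or Jantzen rather than redoing the construction of central elements.

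One place where your write-up is thinner than the rest is the claim that $p(z)$ lands in $\Ue(\Gh)=\bigoplus_\lambda\BF k_{2\lambda}$. Saying that ``commuting with $e_i,f_i$ pins this down'' is not quite an argument: commuting with $U_q(\Gh)$ gives nothing, and the relations $[e_i,f_i]=(k_i-k_i^{-1})/(q_i-q_i^{-1})$ by themselves only show that reshuffling PBW monomials produces $k_\mu$ with $\mu$ in the root lattice, which is not contained in $2\Lambda$ in general. The usual way to see the ``evenness'' is either through the explicit construction of central elements (quantum traces over $V(\lambda)$, which visibly yield $\sum_\mu \dim V(\lambda)_\mu\, k_{2\mu}$ up to a $\rho$-shift), or by first proving $W\circ$-invariance of $p(z)$ on the level of the functions $\lambda\mapsto\chi_\lambda(p(z))$ and then observing that a $W\circ$-invariant element of $U_q(\Gh)$ is automatically in $\Ue(\Gh)$. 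Either route is routine, but the sentence as written does not yet contain the idea.
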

Hence we have an isomorphism 
\begin{equation}
\label{eq:HC}
\Xi:Z(U_q(\Gg))\simto \Ue(\Gh)^{W\circ}
\end{equation}
of $\BF$-algebras.
We have
\begin{equation}
\label{eq:z-on-V}
z|_{V(\lambda)}=\chi_\lambda(\Xi(z))\id
\qquad(z\in Z(U_q(\Gg)), \lambda\in\Lambda).
\end{equation}
\subsection{}
Let $\Gc=\Gh$, $\Gb$ or $\Gg$ and 
set $C=H$, $B$, $G$ accordingly.
The dual space $U_q(\Gc)^*=\Hom_\BF(U_q(\Gc),\BF)$ turns out to be a $U_q(\Gc)$-bimodule by
\begin{equation}
\label{eq:bimod}
\langle x\cdot\varphi\cdot y,z\rangle
=\langle\varphi,yzx\rangle
\qquad
(\varphi\in U_q(\Gc)^*, \; x, y, z\in U_q(\Gc)).
\end{equation}
We denote by $O(C_q)$ the subspace of $U_q(\Gc)^*$ consisting of $\varphi\in U_q(\Gc)^*$ satisfying 
$U_q(\Gc)\cdot\varphi\in\Mod_\inte(U_q(\Gc))$.
It is known that we have $\varphi\in O(C_q)$ if and only if $\varphi\cdot U_q(\Gc)\in\Mod^r_\inte(U_q(\Gc))$.
Hence $O(C_q)$ is a $U_q(\Gc)$-subbimodule of $U_q(\Gc)^*$.
Moreover, $O(C_q)$ turns out to be a Hopf algebra whose multiplication, unit, comultiplication, counit, antipode 
are given by the transposes of the comultiplication, counit, multiplication, unit, antipode of $U_q(\Gc)$ respectively.

\begin{remark}
\label{rem:UgOG}
The Hopf algebra $O(C_q)$ is a quantum analogue of the 
coordinate algebra $O(C)$ of the affine algebraic group $C$.
By differentiating the $C$-biaction on $O(C)$:
\[
C\times O(C)\times C\ni(g,\varphi,g')\mapsto 
g\cdot{\varphi}\cdot{g'}\in O(C)
\]
given by $(g\cdot\varphi\cdot{g'})(x)=\varphi(g'xg)$ for $x\in C$
we obtain the corresponding $U(\Gc)$-bimodule structure of
$O(C)$:
\begin{equation}
\label{eq:UgOG}
U(\Gc)\times O(C)\times U(\Gc)
\ni
(u,\varphi,u')\mapsto
u\cdot\varphi\cdot u'\in O(C),
\end{equation}
where $U(\Gc)$ denotes the enveloping algebra of $\Gc$.
The $U_q(\Gc)$-bimodule structure of $O(C_q)$ given by \eqref{eq:bimod} is a quantum analogue of \eqref{eq:UgOG}.
\end{remark}

We denote by $\Comod(O(C_q))$ 
(resp.\ $\Comod^r(O(C_q))$)
the category of left 
(resp.\ right) $O(C_q)$-comodules.
We have equivalences 
\begin{equation}
\label{mod:comodmod}
\Mod_\inte(U_q(\Gc))\cong\Comod^r(O(C_q)),
\qquad
\Mod^r_\inte(U_q(\Gc))\cong\Comod(O(C_q))
\end{equation}
of categories.
The correspondence is given as follows.
Assume that $V$ is a left 
(resp.\ right) $O(C_q)$-comodule 
with respect to $\beta:V\to O(C_q)\otimes V$
(resp.\ $\beta':V\to V\otimes O(C_q)$).
Then the corresponding integrable
right (resp.\ left) $U_q(\Gc)$-module structure of $V$ is given by
\begin{gather*}
\beta(v)=\sum_r\varphi_r\otimes v_r
\;\;
\Longrightarrow\;\;
vu=\sum_r\langle\varphi_r,u\rangle v_r
\\
(\text{resp.}\;
\beta'(v)=\sum_rv_r\otimes\varphi_r\;\;
\Longrightarrow\;\;
uv=\sum_r\langle\varphi_r,u\rangle v_r)
\end{gather*}
for $v\in V$, $u\in U_q(\Gc)$.

We have 
\begin{equation}
O(H_q)=\bigoplus_{\lambda\in\Lambda}\BF\chi_\lambda
\subset U_q(\Gh)^*.
\end{equation}

For $\lambda\in\Lambda$ let
$\tchi_\lambda:U_q(\Gb)\to\BF$ be the character defined by 
\[
\tchi_\lambda(h)=\chi_\lambda(h)
\quad(h\in U_q(\Gh)),
\qquad
\tchi_\lambda(y)=\varepsilon(y)
\quad(y\in U_q(\Gn)).
\]
Then we have
\begin{equation}
\tchi_\lambda\in O(B_q).
\end{equation}
Set 
\[
\tilde{U}_q(\Gn)^\bigstar
=\bigoplus_{\gamma\in Q^+}
(\tilde{U}_q(\Gn)_{-\gamma})^*
\subset 
\tilde{U}_q(\Gn)^*.
\]
Then we have an isomorphism 
\begin{equation}
\tilde{U}_q(\Gn)^\bigstar\otimes O(H_q)
\simto O(B_q)
\end{equation}
of $\BF$-modules sending $\psi\otimes\chi\in \tilde{U}_q(\Gn)^\bigstar\otimes O(H_q)$ to 
$\varphi\in O(B_q)$ given by
\[
\langle\varphi,hy\rangle
=\langle\psi,y\rangle
\langle\chi,h\rangle
\qquad(h\in U_q(\Gh), y\in \tilde{U}_q(\Gn)).
\]

For $\lambda\in\Lambda^+$ we have an embedding 
\[
\Phi_\lambda:V(\lambda)\otimes V^*(\lambda)\to O(G_q)
\]
of $U_q(\Gg)$-bimodules given by 
\[
\langle\Phi_\lambda(v\otimes v^*),x\rangle
=\langle v^*,xv\rangle
\qquad
(v\in V(\lambda), v^*\in V^*(\lambda), x\in U_q(\Gg)).
\]
This gives an isomorphism
\begin{equation}
\label{eq:PW}
O(G_q)\cong \bigoplus_{\lambda\in\Lambda^+}V(\lambda)\otimes V^*(\lambda)
\end{equation}
of $U_q(\Gg)$-bimodules.

We  denote by
\begin{equation}
\res:O(G_q)\to O(B_q)
\end{equation}
the canonical Hopf algebra homomorphism induced by the inclusion $U_q(\Gb)\hookrightarrow U_q(\Gg)$.
\subsection{}
We set
\begin{equation}
\label{eq:Ufdef}
\Uf(\Gg)
=
\{
u\in U_q(\Gg)\mid
\dim\ad(U_q(\Gg))(u)<\infty\}.
\end{equation}
It is a subalgebra of $U_q(\Gg)$ satisfying 
\begin{equation}
\label{eq:Uf}
\Delta(\Uf(\Gg))\subset
U_q(\Gg)\otimes \Uf(\Gg).
\end{equation}
Denote by $\Ue(\Gg)$ the subalgebra of $U_q(\Gg)$ generated by $k_{2\lambda}$ \;($\lambda\in\Lambda$), $\tilde{U}_q(\Gn)$, $U_q(\Gn^+)$.
Then we have
\begin{equation}
\label{eq:Ue}
\Delta(\Ue(\Gg))\subset
U_q(\Gg)\otimes \Ue(\Gg),
\end{equation}
and the multiplication of $\Ue(\Gg)$ induces an isomorphism
\begin{equation}
\Ue(\Gg)\cong
\tilde{U}_q(\Gn)\otimes
\Ue(\Gh)\otimes 
U_q(\Gn^+)
\end{equation}
of $\BF$-modules.
It is known that $k_{2\lambda}$ for $\lambda\in\Lambda^+$ is contained in $\Uf(\Gg)$, and the multiplicative set $T=\{k_{2\lambda}\mid\lambda\in\Lambda^+\}$ satisfies the left and right Ore conditions in $\Uf(\Gg)$.
Moreover, we have $T^{-1}\Uf(\Gg)=\Ue(\Gg)$
(see Joseph \cite{JoB}).
In particular, we have the following.
\begin{lemma}
\label{lem:gen}
The linear map
$U_q(\Gh)\otimes \Uf(\Gg)\to U_q(\Gg)$
given by the multiplication of $U_q(\Gg)$ is surjective.
\end{lemma}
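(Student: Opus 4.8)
The plan is to exhibit explicit weight-space generators and use the $T$-localization identity $T^{-1}\Uf(\Gg)=\Ue(\Gg)$ to produce, for every element of $U_q(\Gg)$, a decomposition as a $U_q(\Gh)$-multiple of an $\ad$-finite element. First I would reduce to a triangular situation: by the PBW-type isomorphism $U_q(\Gg)\cong U_q(\Gn)\otimes U_q(\Gh)\otimes U_q(\Gn^+)$, it suffices to show that each factor $U_q(\Gn)$ and $U_q(\Gn^+)$ lies in $U_q(\Gh)\cdot\Uf(\Gg)$, since $U_q(\Gh)$ itself is trivially in the image and the image of the multiplication map $U_q(\Gh)\otimes\Uf(\Gg)\to U_q(\Gg)$ is a left $U_q(\Gh)$-submodule (using that $U_q(\Gh)$ normalizes weight spaces) that is closed under right multiplication by $\Uf(\Gg)$. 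Concretely, if $x\in U_q(\Gn)_{-\gamma}$ and $y\in U_q(\Gn^+)_{\delta}$ are homogeneous, I want $h_1 x \in \Uf(\Gg)$ and $y h_2\in \Uf(\Gg)$ for suitable $k_\lambda$'s, and then $x h\, k_\mu^{-1}\cdot(\text{unit})$-type manipulations assemble a general PBW monomial.

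The key step is the following observation about $\ad$-finiteness of $k_\lambda$-twists of nilpotent generators. For $\gamma\in Q^+$ the space $\tilde U_q(\Gn)_{-\gamma}=k_\gamma U_q(\Gn)_{-\gamma}$ sits inside $\Ue(\Gg)$, and $\Ue(\Gg)=T^{-1}\Uf(\Gg)$ means every element of $\tilde U_q(\Gn)_{-\gamma}$ can be written as $k_{2\nu}^{-1}u$ with $\nu\in\Lambda^+$ and $u\in\Uf(\Gg)$; hence $k_{2\nu}\tilde U_q(\Gn)_{-\gamma}\subset\Uf(\Gg)$, i.e. $k_{2\nu+\gamma}U_q(\Gn)_{-\gamma}\subset\Uf(\Gg)$. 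Choosing $\nu$ to depend (linearly, dominantly) on $\gamma$, this shows $U_q(\Gn)_{-\gamma}\subset U_q(\Gh)\cdot\Uf(\Gg)$ for every $\gamma$, so $U_q(\Gn)\subset U_q(\Gh)\cdot\Uf(\Gg)$. For $U_q(\Gn^+)$: the generators $e_i$ are already $\ad$-finite (each $e_i$ generates a finite-dimensional $\ad$-module, being a highest weight vector for $\ad U_q(\Gb^+)$ up to the Serre relations), and more generally $U_q(\Gn^+)$ is spanned by $\ad$-finite elements after a $k_{2\lambda}$-twist by the same localization argument applied inside $\Ue(\Gg)$, whose defining generators include $U_q(\Gn^+)$ directly; so $U_q(\Gn^+)\subset\Uf(\Gg)\subset U_q(\Gh)\cdot\Uf(\Gg)$, using $\eqref{eq:Ue}$ and $\Ue(\Gg)=T^{-1}\Uf(\Gg)$.

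Putting these together: a PBW monomial $x\, k_\mu\, y$ with $x\in U_q(\Gn)_{-\gamma}$, $y\in U_q(\Gn^+)$ can be rewritten, via the commutation relations $k_\mu e_i=q_i^{(\mu,\alpha_i^\vee)}e_i k_\mu$ (which let us move all torus elements to the left at the cost of scalars), as $k_{\mu'}\,x'\,y'$ with $x'\in U_q(\Gn)_{-\gamma}$, $y'\in U_q(\Gn^+)$; then $x'\in U_q(\Gh)\Uf(\Gg)$ and $y'\in\Uf(\Gg)$ give $x'y'\in U_q(\Gh)\Uf(\Gg)$ because $\Uf(\Gg)$ is a subalgebra and $U_q(\Gh)$-cosets absorb right multiplication by $\Uf(\Gg)$; finally $k_{\mu'}\cdot(U_q(\Gh)\Uf(\Gg))\subset U_q(\Gh)\Uf(\Gg)$ since $U_q(\Gh)$ is closed under multiplication. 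Hence every monomial, and so all of $U_q(\Gg)$, lies in the image of $U_q(\Gh)\otimes\Uf(\Gg)\to U_q(\Gg)$. The main obstacle I anticipate is bookkeeping the weight shifts: one must choose a single $\Lambda^+$-valued (or $\Lambda$-valued) function $\gamma\mapsto\nu(\gamma)$ making $k_{2\nu(\gamma)+\gamma}U_q(\Gn)_{-\gamma}\subset\Uf(\Gg)$ uniformly, and verify that the finitely many $k_\lambda$'s appearing across a spanning set of monomials can be absorbed on the left — this is where one leans on $\eqref{eq:Uf}$, the Ore property of $T$, and Joseph's $T^{-1}\Uf(\Gg)=\Ue(\Gg)$, rather than on any new computation.
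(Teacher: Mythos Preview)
Your approach is correct and matches the paper's: the lemma is stated there as an immediate consequence (``In particular'') of Joseph's identity $T^{-1}\Uf(\Gg)=\Ue(\Gg)$, and your argument is precisely the natural unpacking of that remark via the PBW decomposition. One small point: your claim $U_q(\Gn^+)\subset\Uf(\Gg)$ is true but does not follow from the localization argument alone (which only gives $U_q(\Gn^+)\subset U_q(\Gh)\Uf(\Gg)$); however the weaker inclusion already suffices, since $\Uf(\Gg)$ is a weight module for $\ad(U_q(\Gh))$ and hence $\Uf(\Gg)\,U_q(\Gh)\subset U_q(\Gh)\,\Uf(\Gg)$, which makes $U_q(\Gh)\Uf(\Gg)$ closed under multiplication.
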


\section{$\CO$-modules in the $\Proj$ picture}
\subsection{}
We briefly recall the $\Proj$-construction of the category of $\CO$-modules on the quantized flag manifolds in the following
(see Lunts and Rosenberg \cite{LR}).

Set 
\begin{align}
\label{eq:A}
{A_q}=&
\{\varphi\in O(G_q)\mid
\varphi\cdot y=
\varepsilon(y)\varphi
\quad(y\in U_q(\Gn))\},
\\
\label{eq:Alam1}
{A_q}(\lambda)=&
\{\varphi\in O(G_q)\mid
\varphi\cdot y=
\tchi_\lambda(y)\varphi
\quad(y\in U_q(\Gb)\}
\\
\nonumber
=&
\{\varphi\in O(G_q)\mid
(\res\otimes1)(\Delta(\varphi))=
\tchi_\lambda\otimes \varphi)\}
\qquad(\lambda\in\Lambda).
\end{align}
They are $(U_q(\Gg), U_q(\Gh))$-submodules of 
the $U_q(\Gg)$-bimodule $O(G_q)$.
By \eqref{eq:PW}  we have
\[
{A_q}=\bigoplus_{\lambda\in\Lambda^+}{A_q}(\lambda),
\]
and
\begin{equation}
\label{eq:Alam2}
{A_q}(\lambda)
\cong
\begin{cases}
 V(\lambda)\otimes V^*(\lambda)_\lambda
\quad&(\lambda\in\Lambda^+)
\\
0&(\lambda\notin\Lambda^+),
\end{cases}
\end{equation}
as a $(U_q(\Gg), U_q(\Gh))$-module.
It is easily seen that  ${A_q}(0)=\BF 1$ and ${A_q}(\lambda){A_q}(\mu)\subset {A_q}(\lambda+\mu)$ for $\lambda, \mu\in\Lambda^+$ so that
 ${A_q}$ turns out to be a $\Lambda$-graded $\BF$-algebra.
 
By Joseph \cite{Jo0} we have the following.
\begin{proposition}
\label{prop:A}
\begin{itemize}
\item[(i)]
If $\varphi, \psi\in {A_q}$ satisfy $\varphi\psi=0$, then 
we have $\varphi=0$ or $\psi=0$.
\item[(ii)]
The $\BF$-algebra ${A_q}$ is left and right noetherian.
\end{itemize}
\end{proposition}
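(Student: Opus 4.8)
The plan is to deduce both statements from the Peter--Weyl-type decomposition \eqref{eq:PW} together with the identification \eqref{eq:Alam2}, reducing everything to known finiteness facts about integrable $U_q(\Gg)$-modules. For part (i), the key point is that $A_q$ is $\Lambda^+$-graded and, by \eqref{eq:Alam2}, the homogeneous component $A_q(\lambda)$ is isomorphic as a left $U_q(\Gg)$-module to $V(\lambda)$ (the factor $V^*(\lambda)_\lambda$ being one-dimensional). First I would reduce to the case of homogeneous $\varphi\in A_q(\lambda)$, $\psi\in A_q(\mu)$: if $\varphi=\sum_\lambda\varphi_\lambda$ and $\psi=\sum_\mu\psi_\mu$ are nonzero, pick minimal $\lambda_0$ (resp.\ $\mu_0$) with $\varphi_{\lambda_0}\ne0$ (resp.\ $\psi_{\mu_0}\ne0$) in some chosen total order refining the dominance order; then the $A_q(\lambda_0+\mu_0)$-component of $\varphi\psi$ is $\varphi_{\lambda_0}\psi_{\mu_0}$, so it suffices to show $\varphi_{\lambda_0}\psi_{\mu_0}\ne0$. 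For the homogeneous case, the multiplication map $A_q(\lambda)\otimes A_q(\mu)\to A_q(\lambda+\mu)$ is a morphism of left $U_q(\Gg)$-modules $V(\lambda)\otimes V(\mu)\to V(\lambda+\mu)$, and one checks (using that the highest weight vectors correspond to the "group-like'' generators $\Phi_\lambda$) that the highest weight vector of $V(\lambda)\otimes V(\mu)$ is sent to a nonzero highest weight vector of $V(\lambda+\mu)$; since $V(\lambda+\mu)$ is irreducible, any nonzero $U_q(\Gg)$-submodule image is all of it, and in particular no nonzero product can vanish. Concretely, writing $a_\lambda\in A_q(\lambda)$ for the canonical generator with $\langle a_\lambda, 1\rangle\ne0$ (the image under $\Phi_\lambda$ of $v_\lambda\otimes v^*_{-\lambda}$), one has $a_\lambda a_\mu = c\, a_{\lambda+\mu}$ with $c\ne0$, and a general element of $A_q(\lambda)$ is of the form $u\cdot a_\lambda$ for $u\in U_q(\Gg)$; the bimodule compatibility then propagates non-vanishing from the top.

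For part (ii), I would invoke the standard strategy for noetherianity of such "multi-homogeneous coordinate rings''. The algebra $A_q=\bigoplus_{\lambda\in\Lambda^+}A_q(\lambda)$ is $\Lambda^+$-graded with $A_q(0)=\BF$ and each $A_q(\lambda)$ finite-dimensional over $\BF$; moreover $A_q$ is generated in "degree one'' in the appropriate sense, i.e.\ by $\bigoplus_{i}A_q(\varpi_i)$ together with enough fundamental-weight pieces — more precisely, the multiplication maps $A_q(\lambda)\otimes A_q(\mu)\to A_q(\lambda+\mu)$ are surjective (again by irreducibility of $V(\lambda+\mu)$ and the Cartan-component argument above, since $V(\lambda+\mu)$ is the Cartan component of $V(\lambda)\otimes V(\mu)$). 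Given finite generation in a strong graded sense plus the fact that each graded piece is finite-dimensional, one runs the usual argument: a graded left ideal $\GI\subset A_q$ is generated by its pieces $\GI\cap A_q(\lambda)$ for $\lambda$ in a bounded region of $\Lambda^+$, because $\Lambda^+$ (being a finitely generated submonoid of $\Lambda$) has Dickson-type finiteness for its order ideals, and then ordinary-module noetherianity over the finite-dimensional pieces closes the argument; the passage from graded to ungraded noetherianity is standard for $\BN$-graded (here $\Lambda^+$-graded) algebras with $A_q(0)$ a field. I would cite Joseph \cite{Jo0} for the precise form of this, since the statement attributes the proposition to that reference.

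The main obstacle, I expect, is part (ii): establishing noetherianity genuinely from scratch would require either a PBW/straightening-law argument for $A_q$ (realizing it as a quotient or deformation of a polynomial-type algebra, e.g.\ via quantum minors and quantum Plücker relations) or an external input such as the fact that $A_q$ is a Noetherian algebra because it is a finitely-generated module over a central Noetherian subalgebra, or a filtered-degeneration argument to the classical coordinate ring $O(N\backslash G)$. Since the proposition is explicitly credited to Joseph \cite{Jo0}, the honest move is to cite it there rather than reprove it; in a self-contained write-up I would single out the degeneration-to-the-classical-ring route (using the fact that $\dim_\BF A_q(\lambda)=\dim A(\lambda)$ for all $\lambda$, which follows from \eqref{eq:Alam2} and the corresponding classical decomposition) as the cleanest path, with the caveat that making the filtration and its associated graded precise is the technical heart. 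Part (i), by contrast, is a clean consequence of \eqref{eq:PW}--\eqref{eq:Alam2} and irreducibility, and I do not anticipate difficulties there.
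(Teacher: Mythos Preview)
The paper gives no proof of this proposition at all; it simply attributes both parts to Joseph \cite{Jo0}. Your instinct to cite Joseph for part~(ii) is therefore exactly what the paper does, and your discussion of possible self-contained routes (degeneration to $q=1$, PBW/straightening) is reasonable commentary but not required here.

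For part~(i), however, your direct argument has a genuine gap. The reduction to homogeneous $\varphi\in A_q(\lambda)$, $\psi\in A_q(\mu)$ is fine once you use a total group order on $\Lambda$. But in the homogeneous case you identify the multiplication $A_q(\lambda)\otimes A_q(\mu)\to A_q(\lambda+\mu)$ with the Cartan projection $V(\lambda)\otimes V(\mu)\to V(\lambda+\mu)$, observe that it is surjective (highest weight vector maps to a nonzero highest weight vector, target irreducible), and then conclude ``in particular no nonzero product can vanish''. Surjectivity does not give this. What you actually need is that no nonzero \emph{decomposable} tensor $v\otimes w$ lies in the kernel of the Cartan projection, and that is precisely the nontrivial content of~(i), not a consequence of irreducibility. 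Your follow-up sentence about ``bimodule compatibility propagating non-vanishing from the top'' does not close the gap either: from $\varphi=u\cdot a_\lambda$, $\psi=u'\cdot a_\mu$ and $a_\lambda a_\mu\ne 0$ there is no formal way to deduce $(u\cdot a_\lambda)(u'\cdot a_\mu)\ne 0$. Classically this fact holds because $A$ is the coordinate ring of the irreducible variety $N\backslash G$; in the quantum case it is typically obtained by a filtration/degeneration argument to $q=1$ --- exactly the kind of input you correctly flagged as the hard part of~(ii). So part~(i) is not the free consequence of \eqref{eq:PW}--\eqref{eq:Alam2} that you anticipate, and the honest move there too is to cite \cite{Jo0}.
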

We define an abelian category 
$\Mod(\CO_{\CB_q})$ by
\begin{equation}
\Mod(\CO_{\CB_q}):=
\Mod_\Lambda({A_q})/\Tor_{\Lambda^+}({A_q})
:=
\Sigma^{-1}\Mod_\Lambda({A_q}),
\end{equation}
where $\Tor_{\Lambda^+}({A_q})$ is the full subcategory of 
$\Mod_\Lambda({A_q})$
consisting of $K\in\Mod_\Lambda({A_q})$ 
such that for any $k\in K$ 
we have ${A_q}(\lambda)k=0$ for sufficiently large
$\lambda\in\Lambda^+$, i.e., 
there exists some $n>0$ satisfying 
\[
\lambda\in \Lambda^+,
\quad
\langle\lambda,\alpha_i^\vee\rangle\geqq n
\quad(\forall i\in I)
\quad\Rightarrow\quad
{A_q}(\lambda)k=0.
\]
Moreover, $\Sigma$ consists of morphisms in $\Mod_\Lambda({A_q})$ whose kernel and cokernel belong to $\Tor_{\Lambda^+}({A_q})$, 
and 
$\Sigma^{-1}\Mod_\Lambda({A_q})$ denotes the localization of the category $\Mod_\Lambda({A_q})$ so that the morphisms in $\Sigma$ turn out to be isomorphisms (see \cite{GZ}, \cite{P} for the notion of the localization of categories).
For $\GM, \GN\in\Mod(\CO_{\CB_q})$ we denote by 
$\Hom_{\CO_{\CB_q}}(\GM,\GN)$ the set of morphisms from $\GM$ to $\GN$.

Denote by 
\begin{equation}
\label{eq:pi}
\varpi:
\Mod_\Lambda({A_q})
\to
\Mod(\CO_{\CB_q})
\end{equation}
the natural functor.
It admits a right adjoint 
\begin{equation}
\label{eq:Gamma*}
\tGamma:
\Mod(\CO_{\CB_q})
\to
\Mod_\Lambda({A_q})
\end{equation}
(see \cite[Ch 4, Proposition 5.2]{P}).
Note that $\varpi$ is exact and $\tGamma$ is left exact.
We have canonical morphisms
\begin{align}
\label{eq:canon}
&M\to \tGamma(\varpi M)
\qquad(M\in\Mod_\Lambda({A_q})),
\\
\label{eq:canon2}
&\varpi(\tGamma(\GM))\to \GM
\qquad(\GM\in\Mod(\CO_{\CB_q}))
\end{align}
corresponding to 
\begin{align*}
&\id\in\Hom_{\CO_{\CB_q}}(\varpi M,\varpi M)
\cong
\Hom^\gr_{{A_q}}(M,\tGamma(\varpi M)),
\\
&\id\in\Hom\Hom^\gr_{{A_q}}(\tGamma(\GM),\tGamma(\GM))
\cong
\Hom_{\CO_{\CB_q}}(\varpi(\tGamma(\GM)),\GM)
\end{align*}
respectively.
By \cite[Ch 4, Proposition 4.3]{P} \eqref{eq:canon2} is an isomorphism so that we have an equivalence
\begin{equation}
\label{eq:piGamma}
\varpi\circ\tGamma\simto\Id
\end{equation}
of functors.

The functor 
\[
(\bullet)[\lambda]:
\Mod_\Lambda({A_q})
\to
\Mod_\Lambda({A_q})
\qquad(\lambda\in\Lambda)
\]
(see \ref{subsec:notation}) induces the functor
\begin{equation}
\label{eq:translation}
(\bullet)[\lambda]:
\Mod(\CO_{\CB_q})
\to
\Mod(\CO_{\CB_q})
\qquad(\lambda\in\Lambda).
\end{equation}
We have
$(\bullet)[0]=\Id$ and 
$(\bullet)[\lambda][\mu]=(\bullet)[\lambda+\mu]$ for $\lambda, \mu\in \Lambda$.
It is easily seen that 
\[
\varpi(M[\lambda])=(\varpi M)[\lambda],
\qquad
\tGamma(\GM[\lambda])
=(\tGamma(\GM))[\lambda]
\]
for $M\in\Mod_\Lambda({A_q})$, $\GM\in\Mod(\CO_{\CB_q})$, $\lambda\in\Lambda$.
\begin{remark}
\label{rem:Proj1}
Similarly to ${A_q}$ we can also define at $q=1$ a $\Lambda$-graded $\BC$-algebra $A$ using $U(\Gg)$ and $O(G)$ instead of $U_q(\Gg)$ and $O(G_q)$.
It is a subalgebra of the coordinate algebra $O(G)$ of $G$, and is identified with the homogeneous coordinate algebra of the flag manifold $\CB=B\backslash G$ regarded as a projective scheme. 
Namely, we have $\CB=\Proj(A)$, and the category $\Mod(\CO_\CB)$ of 
quasi-coherent $\CO_\CB$-modules is given by 
\[
\Mod(\CO_{\CB})=
\Mod_\Lambda(A)/\Tor_{\Lambda^+}(A).
\]
The functor corresponding to \eqref{eq:translation} is given by 
\[
\Mod(\CO_{\CB})\to \Mod(\CO_{\CB})
\qquad
(\GM\mapsto
\GM\otimes_{\CO_\CB} \CO_\CB(\lambda)),
\]
where 
$\CO_\CB(\lambda)$ is the invertible 
$\CO_\CB$-module corresponding to 
$\lambda\in\Lambda$.
Moreover, the functor corresponding to \eqref{eq:Gamma*} is given by 
\[
\Mod(\CO_{\CB})\to \Mod_\Lambda(A)
\qquad
(\GM\mapsto
\bigoplus_{\lambda\in\Lambda}
\Gamma(\CB,\GM\otimes_{\CO_\CB}\CO_\CB(\lambda))).
\]
\end{remark}
\subsection{}
Note that for $\lambda\in\Lambda^+$ and $w\in W$ we have $\dim {A_q}(\lambda)_{w^{-1}\lambda}=1$ by
\eqref{eq:Alam2}, where ${A_q}(\lambda)$ is regarded as a $U_q(\Gh)$-module via the left action of $U_q(\Gg)$ on ${A_q}$. 
For each $\lambda\in\Lambda^+$ take a non-zero element $c_{w,\lambda}\in {A_q}(\lambda)_{w^{-1}\lambda}$.
We may assume  
$c_{w,\lambda}c_{w,\mu}=c_{w,\lambda+\mu}$ for any $\lambda, \mu\in \Lambda^+$.
Set
\begin{equation}
\label{eq:Sw}
S_w=\{c_{w,\lambda}\mid
\lambda\in\Lambda^+\}\subset {A_q}
\qquad(w\in W).
\end{equation}
By Joseph \cite{Jo0} 
the multiplicative subset $S_w$ of $A_q$ satisfies the following (see also \cite{T0}).
\begin{proposition}
\label{prop:Ore}
Let $w\in W$.
\begin{itemize}
\item[(i)]
$S_w$ satisfies the left and right Ore conditions in ${A_q}$.
\item[(ii)]
The canonical homomorphism ${A_q}\to S_w^{-1}{A_q}$ is injective.
\end{itemize}
\end{proposition}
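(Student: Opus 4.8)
The plan is to dispose of (ii) first and then prove (i) via a stronger property. Since $A_q$ has no zero divisors by Proposition \ref{prop:A}\,(i), once (i) is known, the kernel of the localization map $A_q\to S_w^{-1}A_q$ — which is the set of elements of $A_q$ annihilated by some member of $S_w$ — is automatically $0$; this gives (ii). For (i) I would show that every $c_{w,\lambda}$ ($\lambda\in\Lambda^+$) is a \emph{normal} element of $A_q$, i.e.\ that $c_{w,\lambda}A_q=A_q\,c_{w,\lambda}$. Granting this, (i) is formal: a multiplicatively closed set of normal elements satisfies the left and right Ore conditions (for $a\in A_q$ and $c\in S_w$, the identities $ca=b_1c$ and $ac=cb_2$ with $b_1,b_2\in A_q$ are precisely the two Ore conditions with $c$ itself as the common denominator), and $S_w$ is multiplicatively closed because $c_{w,\lambda}c_{w,\mu}=c_{w,\lambda+\mu}$. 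As $c_{w,\lambda}$ is homogeneous of degree $\lambda$, normality amounts to the equalities $c_{w,\lambda}A_q(\mu)=A_q(\mu)c_{w,\lambda}$ inside $A_q(\lambda+\mu)$ for every $\mu\in\Lambda^+$; both sides are finite dimensional of dimension $\dim V(\mu)$ since $A_q$ is a domain, so it suffices to prove one inclusion — or, better, to identify the two subspaces outright.

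For that I would transport the question to the representation side. By $\Phi_\nu$ and \eqref{eq:Alam2} there is an isomorphism $A_q(\nu)\simto V(\nu)$ of left $U_q(\Gg)$-modules under which $c_{w,\nu}$ corresponds to an extremal weight vector $v_{w^{-1}\nu}\in V(\nu)_{w^{-1}\nu}$, and — since a product of matrix coefficients on $V(\lambda)$ and $V(\mu)$ is the matrix coefficient on $V(\lambda)\otimes V(\mu)$, while the covector $v^*_\lambda\otimes v^*_\mu$ annihilates every irreducible component of $V(\lambda)\otimes V(\mu)$ other than the top one $V(\lambda+\mu)$ — the multiplication $A_q(\lambda)\otimes A_q(\mu)\to A_q(\lambda+\mu)$ becomes, up to a nonzero scalar, the projection $\pi_{\lambda,\mu}\colon V(\lambda)\otimes V(\mu)\to V(\lambda+\mu)$ onto that top component. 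A short computation of this kind identifies $c_{w,\lambda}A_q(\mu)$ with $\pi_{\lambda,\mu}(v_{w^{-1}\lambda}\otimes V(\mu))$ and $A_q(\mu)c_{w,\lambda}$ with $\pi_{\mu,\lambda}(V(\mu)\otimes v_{w^{-1}\lambda})$, so the claim becomes
\[
\pi_{\lambda,\mu}\bigl(v_{w^{-1}\lambda}\otimes V(\mu)\bigr)=\pi_{\mu,\lambda}\bigl(V(\mu)\otimes v_{w^{-1}\lambda}\bigr)\quad\text{in }V(\lambda+\mu).
\]
When $w=e$ this follows from the braiding. Let $\CR\colon V(\mu)\otimes V(\lambda)\simto V(\lambda)\otimes V(\mu)$ be the $U_q(\Gg)$-module isomorphism given by a suitable normalization of the universal $R$-matrix, chosen so that its unipotent part acts on the second tensor factor through $U_q(\Gn^+)$. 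As $v_\lambda$ is a highest weight vector it is killed by that part, so $\CR(V(\mu)\otimes v_\lambda)=v_\lambda\otimes V(\mu)$; and $\CR$ carries the top component of $V(\mu)\otimes V(\lambda)$ onto that of $V(\lambda)\otimes V(\mu)$, whence $\pi_{\lambda,\mu}\circ\CR$ equals $\pi_{\mu,\lambda}$ up to a scalar. Applying this to $V(\mu)\otimes v_\lambda$ gives the displayed equality for $w=e$, i.e.\ the normality of $c_{e,\lambda}$.

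The passage to general $w$ is, I expect, the main obstacle: now $v_{w^{-1}\lambda}$ is extremal but not of highest weight, so the plain braiding does not map $V(\mu)\otimes v_{w^{-1}\lambda}$ into $v_{w^{-1}\lambda}\otimes V(\mu)$. The plan is to rerun the argument for the $w^{-1}$-twisted triangular decomposition via Lusztig's braid operators $T_{w^{-1}}$: $v_{w^{-1}\lambda}$ is highest weight for the Borel conjugated by $w^{-1}$, and one uses the correspondingly twisted $R$-matrix, which is still a $U_q(\Gg)$-module isomorphism $V(\mu)\otimes V(\lambda)\simto V(\lambda)\otimes V(\mu)$. The real work is to verify that $v_{w^{-1}\lambda}$ is genuinely annihilated by the relevant twisted raising operators and that the twisted $R$-matrix still intertwines $\pi_{\mu,\lambda}$ with $\pi_{\lambda,\mu}$ up to a scalar, tracking carefully the $q$-powers and signs produced by the braid-group action. (An alternative would be to construct a graded $\BF$-algebra automorphism of $A_q$ carrying $S_e$ onto $S_w$ and then transport the case $w=e$; exhibiting such an automorphism seems no easier.) Once $c_{w,\lambda}$ is known to be normal for all $w$ and all $\lambda\in\Lambda^+$, assertions (i) and (ii) follow as explained above.
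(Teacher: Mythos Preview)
The paper does not prove this proposition; it simply attributes it to Joseph \cite{Jo0} (see also \cite{T0}). Your overall strategy---deduce (ii) from (i) and the domain property, and deduce (i) from normality of each $c_{w,\lambda}$---is exactly the route taken in those references, and your reduction of normality to the identity $\pi_{\lambda,\mu}(v_{w^{-1}\lambda}\otimes V(\mu))=\pi_{\mu,\lambda}(V(\mu)\otimes v_{w^{-1}\lambda})$ is correct. The $w=e$ argument via the $R$-matrix is fine.

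The gap is in your plan for general $w$. A ``twisted $R$-matrix'' obtained via Lusztig's $T_{w^{-1}}$ is problematic because the braid operators are \emph{not} coalgebra automorphisms: $T_{w^{-1}}\otimes T_{w^{-1}}$ does not intertwine $\Delta(u)$ with $\Delta(T_{w^{-1}}u)$, so the conjugated braiding is no longer a $U_q(\Gg)$-module map, and hence need not commute with the projections $\pi_{\lambda,\mu}$, $\pi_{\mu,\lambda}$. Equivalently, the universal $R$-matrix is canonical, so there is no genuinely ``different'' $U_q(\Gg)$-intertwiner to appeal to. (Your alternative, a graded algebra automorphism of $A_q$ carrying $S_e$ to $S_w$, does not exist in general either.)

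What Joseph and \cite{T0} actually do is work directly with the $R$-matrix commutation relations in $O(G_q)$. The point is that for two elements of $A_q$ the covector parts are \emph{both} highest weight, so one half of the $R$-matrix action is already trivial; the remaining half acts on the vector parts, and one shows, using the factorisation of $\Theta$ along a convex order on $\Delta^+$ adapted to $w$ together with the extremal-weight equalities $e_\beta v_{w^{-1}\lambda}=0$ (for $w\beta>0$) and $f_\beta v_{w^{-1}\lambda}=0$ (for $w\beta<0$), that in the resulting expansion of $c_{w,\lambda}\varphi$ every term lies in $A_q\,c_{w,\lambda}$. This is the step you should replace your braid-operator sketch with.
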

Since $S_w$ consists of homogeneous elements, the localization $S_w^{-1}{A_q}$ turns out to be a $\Lambda$-graded $\BF$-algebra.
Set
\[
R_{w,q}=(S_w^{-1}{A_q})(0)
\qquad(w\in W).
\]
For $\lambda\in\Lambda$ write $\lambda=\lambda_1-\lambda_2$ for $\lambda_1, \lambda_2\in\Lambda^+$ and set $c_{w,\lambda}=c_{w,\lambda_1}
c_{w,\lambda_2}^{-1}\in (S_w^{-1}{A_q})(\lambda)$.
It does not depend on the choice of $\lambda_1, \lambda_2$.
Then the  homogeneous component $(S_w^{-1}{A_q})(\lambda)$ is a free $R_{w,q}$-module of rank one
generated by $c_{w,\lambda}$.
Hence
we have an equivalence 
\[
\Mod_\Lambda(S_w^{-1}{A_q})\cong
\Mod(R_{w,q})
\qquad(M\mapsto M(0))
\]
of abelian categories.

We see easily that $S_w^{-1}M=0$ for any $M\in \Tor_{\Lambda^+}({A_q})$, and hence the localization functor $\Mod_\Lambda({A_q})\to \Mod_\Lambda(S_w^{-1}{A_q})$ induces an exact functor 
\begin{equation}
\label{eq:jw}
j_{w,q}^*:\Mod(\CO_{\CB_q})\to \Mod(R_{w,q}).
\end{equation}
\begin{remark}
Similarly to $R_{w,q}$ and $j_{w,q}^*$, we can also define at $q=1$ a commutative $\BC$-algebra $R_{w}$ and an exact functor $j_{w}^*:\Mod(\CO_\CB)\to \Mod(R_{w})$.
Then $R_{w}$ is naturally isomorphic to the coordinate algebra of the affine open subset $U_w=B\backslash BB^+{w}$ of $\CB$ so that 
$\Mod(R_{w})$ is identified with the category $\Mod(\CO_{U_w})$ of quasi-coherent $\CO_{U_w}$-modules. 
Moreover, $j^*_{w}:\Mod(\CO_{\CB})\to \Mod(\CO_{U_w})$ is the pull-back with respect to the embedding $j_{w}:U_w\to \CB$.
\end{remark}

It is shown in \cite{Jo0} and \cite{LR} using reduction to $q=1$ that for any $\mu\in\Lambda^+$ we have 
\begin{equation}
\label{eq:Apatch}
\sum_{w\in W}{A_q}(\lambda){A_q}(\mu)_{w^{-1}\mu}
={A_q}(\lambda+\mu)
\end{equation}
for any sufficiently large $\lambda\in\Lambda^+$.
From this we obtain the following patching property.
\begin{proposition}[\cite{LR}]
\label{prop:patch}
A morphism $f$ in $\Mod(\CO_{\CB_q})$ is an isomorphism if $j_w^*f$ is an isomorphism for any $w\in W$.
\end{proposition}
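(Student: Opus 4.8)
The plan is to show that a morphism $f\colon\GM\to\GN$ in $\Mod(\CO_{\CB_q})$ with $j_{w,q}^*f$ an isomorphism for every $w\in W$ is itself an isomorphism, by reducing to a statement about graded $A_q$-modules and then invoking the covering relation \eqref{eq:Apatch}. First I would pass to the kernel and cokernel: since each $j_{w,q}^*$ is an exact functor (this is noted just before \eqref{eq:jw}), it suffices to prove that an object $\GK\in\Mod(\CO_{\CB_q})$ with $j_{w,q}^*\GK=0$ for all $w\in W$ is zero. Using the equivalence $\varpi\circ\tGamma\simto\Id$ of \eqref{eq:piGamma}, write $\GK=\varpi K$ with $K=\tGamma(\GK)\in\Mod_\Lambda(A_q)$; here $K$ is \emph{saturated} in the sense that the canonical map \eqref{eq:canon} $K\to\tGamma(\varpi K)$ is an isomorphism, which will be needed to detect vanishing of $K$ from vanishing of $\varpi K$.

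Next I would translate the hypothesis $j_{w,q}^*\GK=0$ into the statement $S_w^{-1}K=0$ for every $w\in W$. Indeed $j_{w,q}^*$ is induced by $M\mapsto (S_w^{-1}M)(0)$, and since each graded piece $(S_w^{-1}K)(\lambda)$ is a free $R_{w,q}$-module of rank one on $c_{w,\lambda}$ whenever $(S_w^{-1}K)(0)\ne 0$, vanishing in degree $0$ forces $S_w^{-1}K=0$ as a graded module. Concretely, $S_w^{-1}K=0$ means that for each homogeneous $k\in K(\nu)$ there is some $\lambda\in\Lambda^+$ with $c_{w,\lambda}k=0$ in $K(\nu+\lambda)$. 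I would fix $k\in K(\nu)$ and, running over the finite set $W$, choose a single $\mu\in\Lambda^+$ large enough that $c_{w,\mu}k=0$ for all $w\in W$ simultaneously (replacing each $\lambda_w$ by $\mu=\sum_w\lambda_w$ and using $c_{w,\mu}=c_{w,\lambda_w}\prod_{w'\ne w}c_{w,\lambda_{w'}}$, which still lies in $A_q(\mu)_{w^{-1}\mu}$ since the $c$'s were chosen multiplicatively compatible). Then for any $\lambda\in\Lambda^+$,
\[
\Bigl(\sum_{w\in W}A_q(\lambda)\,A_q(\mu)_{w^{-1}\mu}\Bigr)k
\subset\sum_{w\in W}A_q(\lambda)\bigl(A_q(\mu)_{w^{-1}\mu}k\bigr)=0,
\]
and by \eqref{eq:Apatch}, for all sufficiently large $\lambda\in\Lambda^+$ the left-hand bracket equals $A_q(\lambda+\mu)$, whence $A_q(\lambda+\mu)k=0$ for all large $\lambda$. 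This says precisely that $k$ generates a submodule lying in $\Tor_{\Lambda^+}(A_q)$; as $k$ was arbitrary, $K\in\Tor_{\Lambda^+}(A_q)$, so $\varpi K=0$, i.e.\ $\GK=0$. Applying this to $\GK=\Ker f$ and $\GK=\Cok f$ gives that $f$ is an isomorphism.

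The step I expect to be the main technical point is the passage from ``$j_{w,q}^*\GK=0$ for all $w$'' to ``$S_w^{-1}K=0$ for all $w$,'' together with the uniform choice of $\mu$ over $W$: one must be careful that the freeness statement for $(S_w^{-1}K)(\lambda)$ over $R_{w,q}$ is available for an arbitrary graded module (it follows from the rank-one freeness of $(S_w^{-1}A_q)(\lambda)$ on $c_{w,\lambda}$ established above, since $S_w^{-1}K\cong S_w^{-1}A_q\otimes_{A_q}K$ and $(S_w^{-1}A_q)(\lambda)=R_{w,q}c_{w,\lambda}$), and that the finiteness of $W$ is genuinely used to clear denominators simultaneously. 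The remainder — exactness of $j_{w,q}^*$, the identification $\GK\cong\varpi K$, and the invocation of \eqref{eq:Apatch} — is formal once these points are in place. (If one prefers to avoid the saturation remark, it is enough to observe directly that $\varpi K=0$ is exactly the conclusion $K\in\Tor_{\Lambda^+}(A_q)$ produces, so the detour through $\tGamma$ can be skipped and one argues with an arbitrary preimage $K$ of $\GK$ under $\varpi$.)
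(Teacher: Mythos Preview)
Your argument is correct and is exactly the standard unpacking of \eqref{eq:Apatch} that the paper has in mind (the paper gives no details beyond citing \cite{LR}): reduce to $\GK$ with $j_{w,q}^*\GK=0$ for all $w$, lift to a graded module $K$, use Ore localization to get $c_{w,\mu}k=0$ for a common $\mu$, and then apply \eqref{eq:Apatch} to conclude $K\in\Tor_{\Lambda^+}(A_q)$. One small wording fix: the phrase ``$(S_w^{-1}K)(\lambda)$ is a free $R_{w,q}$-module of rank one on $c_{w,\lambda}$'' is not correct for general $K$ --- the point you actually need (and which your later parenthetical essentially says) is that multiplication by the invertible homogeneous element $c_{w,\lambda}$ gives a bijection $(S_w^{-1}K)(0)\to(S_w^{-1}K)(\lambda)$, so vanishing in degree $0$ forces vanishing in every degree.
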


Lunts and Rosenberg \cite{LR} proved the following using Proposition \ref{prop:patch} among other things (see also \ref{subsec:Ind} below).
\begin{proposition}
\label{prop:BW}
We have $\tGamma(\varpi {A_q})\cong {A_q}$.
\end{proposition}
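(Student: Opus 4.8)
The plan is to establish the isomorphism $\tGamma(\varpi A_q)\cong A_q$ by showing that the canonical morphism \eqref{eq:canon} with $M=A_q$, namely $A_q\to\tGamma(\varpi A_q)$, is an isomorphism in $\Mod_\Lambda(A_q)$. By the general formalism of $\Proj$-categories (Artin--Zhang, or Popescu \cite{P}), the graded module $\tGamma(\varpi A_q)$ has the explicit description
\[
\tGamma(\varpi A_q)(\lambda)=\Hom_{\CO_{\CB_q}}(\varpi A_q,(\varpi A_q)[\lambda])
=\varinjlim_{\lambda'\gg0}\Hom^\gr_{A_q}(A_q(\lambda'),\ A_q[\lambda]),
\]
where the colimit runs over the directed set of sufficiently large $\lambda'\in\Lambda^+$ and $A_q(\lambda')$ denotes the submodule $\bigoplus_{\mu}A_q(\lambda'+\mu)$ (or an appropriate truncation); equivalently $\tGamma(\varpi A_q)(\lambda)=\varinjlim_{n}\Hom^\gr_{A_q}(A_q^{\geqq n},A_q[\lambda])$. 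So the task is to compute this colimit of graded Hom-groups and see that it reproduces $A_q(\lambda)$ in each degree.

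First I would reduce the computation to an affine/localized one using the patching setup already in the excerpt. Applying the exact functors $j_{w,q}^*$ for $w\in W$ and invoking Proposition \ref{prop:patch}, it suffices to check that the canonical morphism becomes an isomorphism after restriction to each $R_{w,q}$. On the affine chart, $j_{w,q}^*(\varpi A_q)$ is just $(S_w^{-1}A_q)(0)=R_{w,q}$ as a module over itself, and $j_{w,q}^*$ of its twists $(\varpi A_q)[\lambda]$ are the free rank-one $R_{w,q}$-modules $(S_w^{-1}A_q)(\lambda)$; since $R_{w,q}$ is (a quantum analogue of) the coordinate ring of the affine chart $U_w$, the relevant sheaf Hom is computed directly and one gets exactly $(S_w^{-1}A_q)(\lambda)$ back. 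The key input making the colimit over large $\lambda'$ collapse correctly is precisely the estimate \eqref{eq:Apatch}: for sufficiently large $\lambda$, $\sum_{w}A_q(\lambda)A_q(\mu)_{w^{-1}\mu}=A_q(\lambda+\mu)$, which says the twisting sheaves are globally generated in high degree and controls the difference between $A_q$ and its ``saturation'' $\tGamma(\varpi A_q)$. Concretely, I would show that $\tGamma(\varpi A_q)$ is the $S_w$-saturation-intersection $\bigcap_{w\in W}(S_w^{-1}A_q)$ inside $\prod_w S_w^{-1}A_q$, and then that this intersection equals $A_q$ by a weight/degree argument: an element lying in every $S_w^{-1}A_q$ in degree $\lambda$ must, by \eqref{eq:Apatch} together with the one-dimensionality $\dim A_q(\lambda)_{w^{-1}\lambda}=1$ and Proposition \ref{prop:Ore}(ii) (injectivity of $A_q\to S_w^{-1}A_q$), already lie in $A_q(\lambda)$; here finiteness/noetherianity from Proposition \ref{prop:A}(ii) and the integrability of $A_q$ as a $U_q(\Gg)$-module bound denominators uniformly.

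The main obstacle I anticipate is the passage from ``$j_{w,q}^*$ of the canonical map is an isomorphism for each $w$'' — which only gives that \eqref{eq:canon} is an isomorphism after each $j_{w,q}^*$, hence that its kernel and cokernel are $S_w$-torsion for all $w$ — to the honest statement that $A_q\to\tGamma(\varpi A_q)$ is an isomorphism of graded modules. Proposition \ref{prop:patch} applies to morphisms in $\Mod(\CO_{\CB_q})$, and indeed tells us $\varpi$ of the canonical map is an isomorphism (which is just \eqref{eq:piGamma}); upgrading this to the level of $\Mod_\Lambda(A_q)$ requires knowing that $\tGamma(\varpi A_q)$ contains no $\Tor_{\Lambda^+}$-subobject (automatic, since it is a subobject of a product of localizations, hence torsion-free) and, more delicately, that $A_q$ is already ``saturated'', i.e. $\coker(A_q\to\tGamma(\varpi A_q))$ vanishes rather than merely being torsion. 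This is exactly where one needs the sharp form of \eqref{eq:Apatch} (valid for all large $\lambda$, uniformly) rather than a weaker generic statement, and where reduction to $q=1$ — comparing with the classical fact that $\CB=\Proj(A)$ is projectively normal so that $\Gamma(\CB,\CO_\CB(\lambda))=A(\lambda)$ — gives the cleanest route: one shows the graded pieces of $\tGamma(\varpi A_q)$ and $A_q$ have the same (finite) dimension by a flatness/specialization argument, forcing the injection \eqref{eq:canon} to be an equality. The bookkeeping of which $\lambda$ are ``sufficiently large'' and verifying the colimit identifications are compatible with the $U_q(\Gg)$- and grading-module structures is routine but needs care.
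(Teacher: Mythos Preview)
Your outline follows the Lunts--Rosenberg route that the paper merely cites (``proved the following using Proposition~\ref{prop:patch} among other things''); the paper's own argument, given later in \S\ref{subsec:Ind}, is different and considerably shorter. There the authors observe that $A_q$ carries an equivariant structure, pass through the equivalence $\Mod^\eq(\CO_{\CB_q})\cong\Mod^\eq(\CO_{G_q},B_q)$ of Proposition~\ref{prop:AZBKeq}, and identify each graded piece $\tGamma(\varpi A_q)(\lambda)$ with the induced module $\Ind(\BF'_\lambda)$; quantum Borel--Weil (from \cite{APW}) then gives $\Ind(\BF'_\lambda)\cong V(\lambda)$ for $\lambda\in\Lambda^+$ and $0$ otherwise, matching \eqref{eq:Alam2}. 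This bypasses the saturation problem entirely by computing $\tGamma$ representation-theoretically rather than sheaf-theoretically.

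Your approach is sound in principle, and you correctly isolate the real difficulty: Proposition~\ref{prop:patch} lives in $\Mod(\CO_{\CB_q})$, so it only reproves $\varpi\circ\tGamma\cong\Id$ and cannot by itself upgrade the canonical map to an isomorphism in $\Mod_\Lambda(A_q)$. Your proposed fix via \eqref{eq:Apatch} alone is not quite enough, though: from $c_{w,\mu}x\in A_q$ for all $w$ one gets (using \eqref{eq:Apatch}) that $A_q(\lambda+\mu)x\subset A_q$ for large $\lambda$, which only says $x$ lies in the saturation --- exactly what you started with. What is genuinely needed is the vanishing $\tGamma(\varpi A_q)(\nu)=0$ for $\nu\notin\Lambda^+$ and the dimension bound for $\nu\in\Lambda^+$, i.e.\ a Borel--Weil-type input. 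Your fallback of specializing to $q=1$ and comparing dimensions supplies this (and is what \cite{LR} does), but note that this is doing the same representation-theoretic work that the paper's induction argument does directly; the patching apparatus is then somewhat beside the point. The induction proof buys you a clean, self-contained argument once Proposition~\ref{prop:AZBK} is in hand; your approach buys independence from the Backelin--Kremnizer equivalence, at the cost of the specialization step.
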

\section{$\CO$-modules in the pull-back picture}
\subsection{}
In this section we recall another approach to  the quantized flag manifolds given by 
Backelin and Kremnizer \cite{BK}.

We define an abelian category  
$\Mod(\CO_{G_q},B_q)$ as follows.
An object is a left  $O(G_q)$-module $M$ equipped with a left $O(B_q)$-comodule structure 
\[
\beta_M:M\to O(B_q)\otimes M
\]
such that $\beta_M$ is a homomorphism of left $O(G_q)$-modules with respect to the diagonal left action of $O(G_q)$ on 
$O(B_q)\otimes M$ 
(equivalently, the left $O(G_q)$-module structure 
$O(G_q)\otimes M\to M$ is 
a homomorphism of left $O(B_q)$-comodules 
with respect to the diagonal left coaction of 
$O(B_q)$ on $O(G_q)\otimes M$).
Here, $O(G_q)$ is regarded as a left $O(B_q)$-comodule by 
$(\res\otimes1)\circ\Delta:
O(G_q)\to O(B_q)\otimes O(G_q)$.
A morphism  in 
$\Mod(\CO_{G_q},B_q)$ is a homomorphism of left $O(G_q)$-modules respecting the left coaction of $O(B_q)$.

\begin{remark}
At $q=1$ we can similarly define a category 
$\Mod(\CO_{G},B)$
using $O(G)$, $O(B)$ instead of $O(G_q)$, $O(B_q)$.
Namely, an object of 
$\Mod(\CO_{G},B)$
is an $O(G)$-module $M$ equipped with a left $O(B)$-comodule structure 
satisfying the obvious compatibility condition.
This category
is naturally identified with
the category of $B$-equivariant quasi-coherent $\CO_G$-modules.
Moreover, we have an equivalence 
\begin{equation}
\label{eq:O-equivalence}
\Mod(\CO_{\CB})\cong
\Mod(\CO_{G},B)
\qquad
(\GM\mapsto p^*\GM)
\end{equation}
of categories, where $p:G\to \CB=B\backslash G$ is the canonical morphism.
\end{remark}
\subsection{}
For $\lambda\in\Lambda$ we define a one-dimensional left $O(B_q)$-comodule $\BF_\lambda=\BF 1_\lambda$ by
\[
\BF_\lambda\to O(B_q)\otimes\BF_\lambda
\qquad(1_\lambda\mapsto \tchi_\lambda\otimes 1_\lambda),
\]
and define a functor 
\begin{equation}
\label{eq:stranslation}
(\bullet)[\lambda]:
\Mod(\CO_{G_q},B_q)
\to 
\Mod(\CO_{G_q},B_q)
\qquad(M\mapsto M[\lambda])
\end{equation}
by $M[\lambda]=M\otimes \BF_{-\lambda}$, 
where the $O(G_q)$-module structure of $M[\lambda]$ is given by the $O(G_q)$-action on the first factor and the $O(B_q)$-comodule structure is given by the diagonal coaction.
We have
$(\bullet)[0]=\Id$ and 
$(\bullet)[\lambda][\mu]=(\bullet)[\lambda+\mu]$.

We define a left exact functor 
\begin{equation}
\label{eq:sGamma*}
(\bullet)^{N_q}:\Mod(\CO_{G_q},B_q)\to
\Mod_\Lambda({A_q})
\end{equation}
by
\[
M^{N_q}=
\sum_{\lambda\in\Lambda}
M^{N_q}(\lambda)
\subset M,
\quad
M^{N_q}(\lambda)
=
\{m\in M\mid\beta_M(m)=\tchi_\lambda\otimes m\}.
\]
Note that the sum in the first formula is a direct sum.
We define 
\begin{equation}
\label{eq:sGamma}
(\bullet)^{B_q}:\Mod(\CO_{G_q},B_q)\to
\Mod(\BF)
\end{equation}
by
$M^{B_q}=
M^{N_q}(0)
$.
\section{Comparison of the two categories of $\CO$-modules}
Let $K\in\Mod_\Lambda({A_q})$.
Then $O(G_q)\otimes_{{A_q}}K$ turns out to be an object of $\Mod(\CO_{G_q},B_q)$ by the left $O(G_q)$-module structure given the left multiplication on the first factor and the left $O(B_q)$-comodule structure given by 
\[
\beta_{O(G_q)\otimes_{{A_q}}K}(\varphi\otimes k)=
\sum_{(\varphi)}
\res(\varphi_{(0)})\tchi_\lambda\otimes
(\varphi_{(1)}\otimes k)
\qquad
(\varphi\in O(G_q),\; k\in K(\lambda)).
\]
This defines a functor 
\begin{equation}
\label{eq:spi}
O(G_q)\otimes_{A_q}(\bullet):
\Mod_\Lambda({A_q})
\to
\Mod(\CO_{G_q},B_q).
\end{equation}
By \cite{BK} we have the following result.
\begin{proposition}
\label{prop:AZBK}
There exists an equivalence 
\begin{equation}
\label{eq:AZBKsim}
p_q^*:\Mod(\CO_{\CB_q})
\simto
\Mod(\CO_{G_q},B_q)
\end{equation}
of abelian categories such that the following diagrams are commutative:
\begin{equation}
\label{eq:AZBK1}
\vcenter{
\xymatrix{
   &
   \Mod_\Lambda({A_q})
\ar[dl]_\varpi
\ar[dr]^{\;O(G_q)\otimes_{A_q}(\bullet)}
   \\
   \Mod(\CO_{\CB_q})
\ar[rr]^{p_q^*}_\cong
&
&
\Mod(\CO_{G_q},B_q),
}}
\end{equation}
\begin{equation}
\label{eq:AZBK2}
\vcenter{
\xymatrix{
\Mod(\CO_{\CB_q})
\ar[d]_{(\bullet)[\lambda]}
\ar[r]^-{p_q^*}_-\cong
&
\Mod(\CO_{G_q},B_q)
\ar[d]^{(\bullet)[\lambda]}
\\
\Mod(\CO_{\CB_q})
\ar[r]^-{p_q^*}_-\cong
&
\Mod(\CO_{G_q},B_q)
}
}
\qquad(\lambda\in\Lambda).
\end{equation}
\end{proposition}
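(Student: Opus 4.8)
The strategy is to build $p_q^*$ as the localization of the functor $O(G_q)\otimes_{A_q}(\bullet)$ along the same system of weak isomorphisms $\Sigma$ that defines $\Mod(\CO_{\CB_q})$. First I would check that $O(G_q)\otimes_{A_q}(\bullet)$ kills $\Tor_{\Lambda^+}(A_q)$: if $K\in\Tor_{\Lambda^+}(A_q)$ then every $k\in K$ is annihilated by $A_q(\lambda)$ for all large $\lambda\in\Lambda^+$, and using Proposition~\ref{prop:Ore} (the Ore property of each $S_w$) together with the patching identity \eqref{eq:Apatch} one sees that $1\otimes k=0$ in $O(G_q)\otimes_{A_q}K$; more carefully, since $\varphi\otimes k = \varphi c_{w,\lambda}^{-1}\otimes c_{w,\lambda}k$ after localizing and $c_{w,\lambda}k=0$ for large $\lambda$, the tensor product vanishes. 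More generally $O(G_q)\otimes_{A_q}(\bullet)$ sends morphisms in $\Sigma$ to isomorphisms, because $O(G_q)$ is flat over $A_q$ on the relevant graded pieces — here one uses \eqref{eq:PW} and the decomposition $A_q=\bigoplus_{\lambda\in\Lambda^+}A_q(\lambda)$ with $A_q(\lambda)\cong V(\lambda)\otimes V^*(\lambda)_\lambda$, so that $O(G_q)$ is a (co)induced module and tensoring is exact. By the universal property of the localization $\varpi:\Mod_\Lambda(A_q)\to\Mod(\CO_{\CB_q})=\Sigma^{-1}\Mod_\Lambda(A_q)$, the functor $O(G_q)\otimes_{A_q}(\bullet)$ factors uniquely through $\varpi$, yielding $p_q^*$ with \eqref{eq:AZBK1} commutative.

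Next I would verify that $p_q^*$ is an equivalence. This is exactly where the work of Backelin and Kremnizer \cite{BK} enters: they characterize $\Mod(\CO_{G_q},B_q)$ via the Artin--Zhang axioms, identifying it with the quotient $\Mod_\Lambda(A_q)/\Tor_{\Lambda^+}(A_q)$ through the functor $(\bullet)^{N_q}$ of \eqref{eq:sGamma*} as a quasi-inverse up to the torsion subcategory. Concretely, I would show that $M\mapsto M^{N_q}$ composed with $\varpi$ is inverse to $p_q^*$: for $K\in\Mod_\Lambda(A_q)$ one has a canonical map $K\to (O(G_q)\otimes_{A_q}K)^{N_q}$ which is an isomorphism modulo $\Tor_{\Lambda^+}(A_q)$ (injectivity uses Proposition~\ref{prop:Ore}(ii), and surjectivity modulo torsion uses \eqref{eq:Apatch} and Proposition~\ref{prop:patch}); conversely, for $M\in\Mod(\CO_{G_q},B_q)$ the counit $O(G_q)\otimes_{A_q}M^{N_q}\to M$ is an isomorphism, which is the content of the Backelin--Kremnizer faithfully-flat-descent statement for the quantum principal $N_q$-bundle $O(G_q)$ over $A_q$. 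I would cite \cite{BK} for this last point rather than reproving it, noting that it ultimately rests on the same patching phenomenon recorded in \eqref{eq:Apatch} and Proposition~\ref{prop:patch}.

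Finally, the compatibility \eqref{eq:AZBK2} with the twist functors is a direct check at the level of $\Mod_\Lambda(A_q)$. On the one side $(\bullet)[\lambda]$ shifts the grading, $M[\lambda](\delta)=M(\lambda+\delta)$; on the other side $(\bullet)[\lambda]$ on $\Mod(\CO_{G_q},B_q)$ is tensoring with $\BF_{-\lambda}$. One has a natural isomorphism $O(G_q)\otimes_{A_q}(K[\lambda])\cong (O(G_q)\otimes_{A_q}K)\otimes\BF_{-\lambda}$ compatible with both the $O(G_q)$-module and $O(B_q)$-comodule structures — on the comodule side this is precisely the $\tchi_\lambda$ appearing in the formula for $\beta_{O(G_q)\otimes_{A_q}K}$ on $K(\lambda)$. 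Passing to the localization via \eqref{eq:AZBK1} then gives \eqref{eq:AZBK2}. The main obstacle is the second paragraph: establishing that the counit $O(G_q)\otimes_{A_q}M^{N_q}\to M$ is an isomorphism, i.e. the faithfully flat descent along $O(G_q)\to A_q$ in the quantum setting; I would lean on \cite{BK} for this and merely indicate how it is phrased in the present notation.
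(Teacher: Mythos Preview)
Your approach differs substantially from the paper's, and the gap lies in the flatness step you try to slip in.

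The paper does not build $p_q^*$ by hand. It simply invokes (an obvious multigraded generalization of) \cite[Theorem~4.5]{AZ}: given an abelian category $\mathcal{C}$ with a distinguished object and an autoequivalence satisfying certain noetherianity and ampleness conditions, $\mathcal{C}$ is equivalent to the noncommutative $\Proj$ of the associated graded endomorphism ring. One applies this with $\mathcal{C}=\Mod(\CO_{G_q},B_q)$, the object $O(G_q)$, and the shifts $(\bullet)[\lambda]$; the graded ring one recovers is exactly $A_q$. The hypotheses of the Artin--Zhang theorem are verified using \cite[Corollary~5.7, Theorem~5.8]{APW} (vanishing for the derived induction functor, which supplies ampleness) and the fact that $O(G_q)$ is left noetherian \cite{JoB}. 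None of these ingredients appear in your argument.

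Your direct route has two concrete problems. First, the torsion-killing argument is broken as written: the element $c_{w,\lambda}^{-1}$ does not live in $O(G_q)$, so the manipulation $\varphi\otimes k=\varphi c_{w,\lambda}^{-1}\otimes c_{w,\lambda}k$ is not valid inside $O(G_q)\otimes_{A_q}K$; it only makes sense after localizing $O(G_q)$ itself, which changes the module you are computing. Second, and more seriously, the claim that $O(G_q)$ is flat over $A_q$ is not a consequence of the Peter--Weyl decomposition \eqref{eq:PW}. That decomposition describes $O(G_q)$ as a $U_q(\Gg)$-bimodule, not as an $A_q$-module; the $A_q$-module structure involves the multiplication map $A_q(\mu)\otimes V(\lambda)\otimes V^*(\lambda)\to O(G_q)$, which mixes the summands in a way governed by tensor product decompositions, and nothing here gives flatness for free. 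In fact the paper runs the logic in the opposite direction: exactness of $O(G_q)\otimes_{A_q}(\bullet)$ and the vanishing on $\Tor_{\Lambda^+}(A_q)$ are deduced in Corollary~\ref{cor:equivalence}(iii),(iv) \emph{from} Proposition~\ref{prop:AZBK}, not used to prove it.

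Finally, when you defer the counit isomorphism $O(G_q)\otimes_{A_q}M^{N_q}\cong M$ to \cite{BK}, you are effectively citing the very Artin--Zhang argument the paper uses; the phrase ``faithfully-flat descent'' is a reinterpretation, not what \cite{BK} actually does. So the honest version of your proof collapses to the paper's: quote \cite[Theorem~4.5]{AZ} and point to \cite{APW} and \cite{JoB} for the hypotheses.
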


Proposition \ref{prop:AZBK} is a consequence of (an obvious generalization of) \cite[Theorem 4.5]{AZ} 
giving a characterization of the category of $\CO$-modules for non-commutative projective schemes.
In order to show that the assumption of \cite[Theorem 4.5]{AZ} is satisfied in our situation one needs to use 
some facts on the derived induction functor for quantum groups (\cite[Corollary 5.7, Theorem 5.8]{APW}) and 
the fact that 
 $O(G_q)$ is a left noetherian ring (\cite{JoB}).

\begin{corollary}
\label{cor:equivalence}
\begin{itemize}
\item[(i)]
The functor 
\begin{equation}
\label{eq:AZBK-inv}
\varpi\circ((\bullet)^{N_q})
:
\Mod(\CO_{G_q},B_q)
\to
\Mod(\CO_{\CB_q})
\end{equation}
gives an inverse to \eqref{eq:AZBKsim}.

\item[(ii)]
The following diagram is commutative:
\begin{equation}
\vcenter{
\xymatrix{
   \Mod(\CO_{\CB_q})
\ar[rr]^-{p_q^*}_-\cong
\ar[dr]_{\tGamma}
&
&
\Mod(\CO_{G_q},B_q)
\ar[dl]^{(\bullet)^{N_q}}
\\
   &
   \Mod_\Lambda({A_q}).
}}
\end{equation}
\item[(iii)]
The functor 
\eqref{eq:spi} is exact.
\item[(iv)]
For $M\in\Tor_{\Lambda^+}({A_q})$ we have
$O(G_q)\otimes_{A_q}M=0$.
\item[(v)]
We have
\[
O(G_q)\otimes_{A_q}M^{N_q}\cong M
\qquad(M\in\Mod(\CO_{G_q},B_q)).
\]
\end{itemize}
\end{corollary}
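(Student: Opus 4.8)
The plan is to derive all five items from Proposition \ref{prop:AZBK} together with the basic properties of $\varpi$, $\tGamma$ recorded in \S3, so that essentially no new computation is needed. I would begin with item (i): since \eqref{eq:AZBK1} says $p_q^*\circ\varpi\cong O(G_q)\otimes_{A_q}(\bullet)$, composing $p_q^*$ with \eqref{eq:AZBK-inv} and using \eqref{eq:piGamma} reduces the claim to the natural isomorphism $(\bullet)^{N_q}\circ p_q^*\circ\varpi\cong\tGamma\circ\varpi\cong\Id$ after $\varpi$, i.e. to item (ii). So the logical order is really (ii) first, then (i), then (iii)–(v).

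For (ii), the point is that both $\tGamma$ and $(\bullet)^{N_q}\circ p_q^*$ are right adjoint to $\varpi$. Indeed $\tGamma$ is right adjoint to $\varpi$ by construction \eqref{eq:Gamma*}; on the other side, $O(G_q)\otimes_{A_q}(\bullet)$ is left adjoint to $(\bullet)^{N_q}$ (this is the standard tensor–hom / coinduction adjunction: a map $O(G_q)\otimes_{A_q}K\to M$ in $\Mod(\CO_{G_q},B_q)$ is the same as a graded $A_q$-linear map $K\to M^{N_q}$, because an $A_q$-linear map out of $O(G_q)\otimes_{A_q}K$ respecting the $O(B_q)$-coaction lands in the $\tchi_\lambda$-isotypic part in degree $\lambda$), so by \eqref{eq:AZBK1} the functor $(\bullet)^{N_q}\circ p_q^*$ is right adjoint to $p_q^{*-1}\circ(O(G_q)\otimes_{A_q}(\bullet))\cong\varpi$. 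Uniqueness of adjoints gives $\tGamma\cong(\bullet)^{N_q}\circ p_q^*$, and one checks the isomorphism is the canonical one. This adjunction bookkeeping is the part I expect to require the most care — one must verify the compatibility of the comodule structure on $O(G_q)\otimes_{A_q}K$ defined in \S5 with the grading on $M^{N_q}$, and confirm that the resulting unit/counit match those of $\varpi\dashv\tGamma$.

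Given (ii), item (i) follows: $p_q^*\circ\varpi\circ(\bullet)^{N_q}\cong (O(G_q)\otimes_{A_q}(\bullet))\circ(\bullet)^{N_q}$, and post-composing (ii) with $p_q^*$ gives $p_q^*\circ\tGamma\cong p_q^*\circ(\bullet)^{N_q}\circ p_q^*$, hence $\varpi\circ(\bullet)^{N_q}\circ p_q^*\cong\varpi\circ\tGamma\cong\Id$ by \eqref{eq:piGamma}; since $p_q^*$ is an equivalence this yields $\varpi\circ(\bullet)^{N_q}\cong (p_q^*)^{-1}$, which is exactly (i). Item (iii) is immediate from \eqref{eq:AZBK1}: $\varpi$ is exact and $p_q^*$ is an equivalence, so $O(G_q)\otimes_{A_q}(\bullet)\cong p_q^*\circ\varpi$ is exact. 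Item (iv) follows because $\varpi(M)=0$ for $M\in\Tor_{\Lambda^+}(A_q)$ by definition of $\Mod(\CO_{\CB_q})$, whence $O(G_q)\otimes_{A_q}M\cong p_q^*(\varpi M)=p_q^*(0)=0$. Finally (v) is the statement that \eqref{eq:AZBK-inv} is a genuine inverse read the other way: applying $p_q^*$ to (i) gives $p_q^*\circ\varpi\circ(\bullet)^{N_q}\cong\Id_{\Mod(\CO_{G_q},B_q)}$, and by \eqref{eq:AZBK1} the left side is $(O(G_q)\otimes_{A_q}(\bullet))\circ(\bullet)^{N_q}$, so $O(G_q)\otimes_{A_q}M^{N_q}\cong M$ naturally in $M$.
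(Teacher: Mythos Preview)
Your proposal is correct and follows essentially the same approach as the paper: the key step is (ii), proved via the adjunction $(O(G_q)\otimes_{A_q}(\bullet))\dashv(\bullet)^{N_q}$ together with uniqueness of right adjoints to $\varpi$, after which (i), (iii), (iv), (v) are formal consequences of \eqref{eq:AZBK1}, the exactness of $\varpi$, the definition of $\Tor_{\Lambda^+}(A_q)$, and \eqref{eq:piGamma}. The paper's proof differs only in bookkeeping order (it derives (v) before (i) rather than after), but the logic is the same.
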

\begin{proof}
(ii) follows from the fact  that \eqref{eq:sGamma*}
is right adjoint to \eqref{eq:spi}.
In view of (ii) the assertions (iii), (iv), (v) are
consequences of 
the exactitude of $\varpi$, the definition of $\varpi$,
and \eqref{eq:piGamma} respectively.
Hence have
\[
p_q^*\circ\varpi\circ((\bullet)^{N_q})
=
(O(G_q)\otimes_{A_q}(\bullet))\circ((\bullet)^{N_q})
=\Id,
\]
and (i) follows.
\end{proof}
\section{Equivariant $\CO$-modules}
\subsection{}
We denote by $\Mod^\eq_\Lambda({A_q})$ the category whose objects are 
$K\in \Mod_\Lambda({A_q})$ equipped with a right $O(G_q)$-comodule structure 
$\gamma_K:K\to K\otimes O(G_q)$ satisfying
\begin{itemize}
\item[(a)]
$\gamma_K(K(\lambda))\subset K(\lambda)\otimes O(G_q)$ \qquad($\lambda\in\Lambda$),
\item[(b)]
$\gamma_K(\varphi k)=\Delta(\varphi)\gamma_K(k)$
\qquad($\varphi\in {A_q}, k\in K$).
\end{itemize}
Note that (b) makes sense by $\Delta({A_q})\subset {A_q}\otimes O(G_q)$.
In terms of the corresponding left integrable $U_q(\Gg)$-module structure of $K$
the condition (b) is equivalent to 
\begin{itemize}
\item[(b)']
$u(\varphi k)=
\sum_{(u)}
(u_{(0)}\cdot\varphi)
(u_{(1)}k)$
\qquad($u\in U_q(\Gg), \varphi\in {A_q}, k\in K$).
\end{itemize}

We define a category $\Mod^\eq(\CO_{\CB_q})$ by
\begin{equation}
\Mod^\eq(\CO_{\CB_q})
=\Mod^\eq_\Lambda({A_q})/
(\Mod^\eq_\Lambda({A_q})\cap\Tor_{\Lambda^+}({A_q})),
\end{equation}
where $\Mod^\eq_\Lambda({A_q})\cap\Tor_{\Lambda^+}({A_q})$ denotes the subcategory of $\Mod^\eq_\Lambda({A_q})$ consisting of $M\in \Mod^\eq_\Lambda({A_q})$ which belongs to $\Tor_{\Lambda^+}({A_q})$ as an object of 
$\Mod_\Lambda({A_q})$.
By abuse of notation we denote by
\begin{equation}
\varpi:\Mod^\eq_\Lambda({A_q})\to
\Mod^\eq(\CO_{\CB_q})
\end{equation}
the natural functor.

We denote by $\Mod^\eq(\CO_{G_q},B_q)$ the category whose objects are $M\in\Mod(\CO_{G_q},B_q)$ equipped with a 
right $O(G_q)$-comodule structure 
$\gamma_M:M\to M\otimes O(G_q)$ 
such that $\beta_M$ and $\gamma_M$ commute with each other and that $\gamma_M$ is a homomorphism of left $O(G_q)$-modules with respect to the diagonal left action of $O(G_q)$ on $M\otimes O(G_q)$.

\begin{remark}
At $q=1$ the category $\Mod^\eq(\CO_\CB)$ 
defined similarly to $\Mod^\eq(\CO_{\CB_q})$  is naturally identified with the category of 
$G$-equivariant quasi-coherent $\CO_\CB$-modules with respect to the right $G$-action on $\CB=B\backslash G$  given by
\[
\CB\times G\ni(Bx,g)\mapsto Bxg\in\CB.
\]
Moreover, the category $\Mod^\eq(\CO_G,B)$ 
defined similarly to $\Mod^\eq(\CO_{G_q},B_q)$  is naturally identified with the category of 
quasi-coherent $\CO_G$-modules 
equivariant 
with respect to the $(B,G)$-biaction on $G$  given by
\[
B\times G\times G\ni(b,x,g)\mapsto bxg\in G.
\]
We have an equivalence
\begin{equation}
\label{eq:1-O-eq}
\Mod^\eq(\CO_\CB)
\cong
\Mod^\eq(\CO_G,B)
\end{equation}
induced by 
\eqref{eq:O-equivalence}
\end{remark}

The functors \eqref{eq:sGamma*}, 
\eqref{eq:sGamma}
 and \eqref{eq:spi} induce
\begin{align}
\label{eq:sGamma*eq}
(\bullet)^{N_q}:&\Mod^\eq(\CO_{G_q},B_q)\to
\Mod^\eq_\Lambda({A_q}), 
\\
\label{eq:sGammaeq}
(\bullet)^{B_q}:&\Mod^\eq(\CO_{G_q},B_q)\to
\Comod^r(O(G_q)), 
\\
\label{eq:spieq}
O(G_q)\otimes_{A_q}(\bullet):&
\Mod^\eq_\Lambda({A_q})
\to
\Mod^\eq(\CO_{G_q},B_q).
\end{align}
Here, for $M\in \Mod^\eq(\CO_{G_q},B_q)$ the right $O(G_q)$-comodule structure of 
$M^{N_q}$ is given by the restriction of $\gamma_M$, and
for $K\in\Mod^\eq_\Lambda({A_q})$ the right $O(G_q)$-comodule structure of 
$O(G_q)\otimes_{A_q}K$ is given by 
\[
\gamma_K(k)=\sum_rk_r\otimes\psi_r\;
\Longrightarrow\;
\gamma_{O(G_q)\otimes_{A_q}K}(\varphi\otimes k)=
(\varphi_{(0)}\otimes k_r)\otimes\varphi_{(1)}\psi_r.
\]
\begin{proposition}
\label{prop:AZBKeq}
The functor \eqref{eq:AZBKsim} induces the equivalence 
\begin{equation}
\label{eq:AZBKeqsim}
p_q^*:
\Mod^\eq(\CO_{\CB_q})
\simto
\Mod^\eq(\CO_{G_q},B_q)
\end{equation}
of abelian categories.
\end{proposition}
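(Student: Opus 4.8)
The strategy is to lift the equivalence of Proposition~\ref{prop:AZBK} to the equivariant setting by checking that all the relevant functors respect the extra right $O(G_q)$-comodule structures, and then to invoke the non-equivariant statement coordinate by coordinate. Concretely, I would first observe that $p_q^*$ was defined in Proposition~\ref{prop:AZBK} as (a functor naturally isomorphic to) $O(G_q)\otimes_{A_q}(\bullet)$ composed with $\varpi$ on one side, and as $\varpi\circ((\bullet)^{N_q})$ on the other (Corollary~\ref{cor:equivalence}(i)). Since \eqref{eq:spieq} and \eqref{eq:sGamma*eq} give the equivariant enhancements of these functors, the natural candidate for \eqref{eq:AZBKeqsim} and its inverse are the induced functors; what has to be proved is that they are well defined, mutually inverse, and exact, and that the forgetful functors to the non-equivariant categories intertwine them with $p_q^*$.

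The first key step is to verify that the right $O(G_q)$-comodule structure on $O(G_q)\otimes_{A_q}K$ written just before Proposition~\ref{prop:AZBKeq} genuinely makes $O(G_q)\otimes_{A_q}K$ an object of $\Mod^\eq(\CO_{G_q},B_q)$: one must check that $\gamma$ commutes with the left coaction $\beta$ of $O(B_q)$ (this is a direct Sweedler computation using coassociativity of $\Delta$ and the fact that $\res$ is a Hopf algebra map, so that $\res$ commutes with comultiplication) and that $\gamma$ is a map of left $O(G_q)$-modules for the diagonal action (this uses condition~(b) in the definition of $\Mod^\eq_\Lambda({A_q})$, namely $\gamma_K(\varphi k)=\Delta(\varphi)\gamma_K(k)$, together with multiplicativity of $\Delta$ on $O(G_q)$). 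Dually, for $M\in\Mod^\eq(\CO_{G_q},B_q)$ one checks that $M^{N_q}$ with $\gamma_M$ restricted to it satisfies (a) and (b): condition (a) holds because $\gamma_M$ commutes with $\beta_M$, so it preserves the $\tchi_\lambda$-isotypic pieces $M^{N_q}(\lambda)$; condition (b) is inherited directly from the module-map property of $\gamma_M$. One also needs that $(\bullet)^{N_q}$ and $O(G_q)\otimes_{A_q}(\bullet)$ send the torsion subcategory $\Mod^\eq_\Lambda({A_q})\cap\Tor_{\Lambda^+}({A_q})$ to zero, which follows from Corollary~\ref{cor:equivalence}(iv) since the underlying functors already do.

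The second key step is to transport the isomorphisms $\varpi\circ\tGamma\simto\Id$ \eqref{eq:piGamma}, $O(G_q)\otimes_{A_q}M^{N_q}\cong M$ (Corollary~\ref{cor:equivalence}(v)), and $p_q^*\circ\varpi\circ((\bullet)^{N_q})=\Id$ (proof of Corollary~\ref{cor:equivalence}(i)) to the equivariant level, i.e.\ to check that these natural isomorphisms are morphisms in the equivariant categories. This is the main technical point but it is not deep: each of these canonical maps is built from the unit/counit of the adjunction $O(G_q)\otimes_{A_q}(\bullet)\dashv (\bullet)^{N_q}$ (or $\varpi\dashv\tGamma$), and one verifies that these adjunction units/counits are compatible with $\gamma$ by a Sweedler-notation chase, using that the right coaction on $O(G_q)\otimes_{A_q}K$ was defined precisely so as to be compatible with that on $K$ via $\varphi\otimes k\mapsto\varphi_{(0)}\otimes k_r$ tensored with $\varphi_{(1)}\psi_r$. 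Granting this, the induced functors $\varpi\circ((\bullet)^{N_q})$ and $O(G_q)\otimes_{A_q}(\bullet)$ on the equivariant categories are quasi-inverse, they are exact because the corresponding non-equivariant functors are (Corollary~\ref{cor:equivalence}(iii) and exactness of $\varpi$, and the forgetful functors to $\Mod(\CO_{\CB_q})$, $\Mod(\CO_{G_q},B_q)$ detect exactness since they are faithful and exact), and by construction the forgetful functors intertwine the equivariant $p_q^*$ with the non-equivariant $p_q^*$ of Proposition~\ref{prop:AZBK}.

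The anticipated main obstacle is purely bookkeeping: carefully matching the two left coactions ($\beta$ on one factor versus the diagonal one) and the right coaction $\gamma$ through the tensor product $O(G_q)\otimes_{A_q}(\bullet)$, i.e.\ confirming that all four structures (left $O(G_q)$-module, left $O(B_q)$-comodule, right $O(G_q)$-comodule, and the $\Lambda$-grading) are mutually compatible in the sense required by the definitions of $\Mod^\eq_\Lambda({A_q})$ and $\Mod^\eq(\CO_{G_q},B_q)$. No new geometric or representation-theoretic input beyond Proposition~\ref{prop:AZBK} and its corollary is needed; the equivariance is added "for free" once one checks the Sweedler identities.
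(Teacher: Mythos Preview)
Your proposal is correct and follows the same approach as the paper: the paper's proof is a terse two sentences invoking Corollary~\ref{cor:equivalence}(iv) and the universal property of localization to produce the equivariant $p_q^*$, then Corollary~\ref{cor:equivalence}(i) to identify $\varpi\circ((\bullet)^{N_q})$ as its inverse. Your plan simply spells out the Sweedler-notation verifications that the paper absorbs into ``it is easily seen.''
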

\begin{proof}
By Corollary \ref{cor:equivalence} (iv) and the universality of the localization of categories 
we obtain a functor 
$p_q^*:
\Mod^\eq(\CO_{\CB_q})
\to
\Mod^\eq(\CO_{G_q},B_q)
$ induced by \eqref{eq:AZBKsim}.
It is easily seen from Corollary \ref{cor:equivalence} (i)  that its inverse is given by
$
\varpi\circ((\bullet)^{N_q})
:
\Mod^\eq(\CO_{G_q},B_q)
\to
\Mod^\eq(\CO_{\CB_q})
$.
\end{proof}
We have the following commutative diagram:
\begin{equation}
\vcenter{
\xymatrix{
   & 
   \Mod^\eq_\Lambda({A_q})
\ar[dl]_\varpi
\ar[dr]^{\;O(G_q)\otimes_{A_q}(\bullet)}
   \\
   \Mod^\eq(\CO_{\CB_q})
\ar[rr]^-{p_q^*}_-\cong
&
&
\Mod^\eq(\CO_{G_q},B_q).
}}
\end{equation}
We define a functor
\begin{equation}
\tGamma:
\Mod^\eq(\CO_{\CB_q})
\to
\Mod_\Lambda^\eq({A_q})
\end{equation}
by the commutative diagram:
\begin{equation}
\vcenter{
\xymatrix{
   \Mod^\eq(\CO_{\CB_q})
\ar[rr]^-{p_q^*}_-\cong
\ar[dr]_{\tGamma}
&
&
\Mod^\eq(\CO_{G_q},B_q)
\ar[dl]^{(\bullet)^{N_q}}
\\
   &
   \Mod^\eq_\Lambda({A_q}).
}}
\end{equation}
Then we have the commutative diagram:
\begin{equation}
\vcenter{
\xymatrix{
\Mod^\eq_\Lambda({A_q})
\ar[r]
\ar[d]_\tGamma
&
\Mod_\Lambda({A_q})
\ar[d]^\tGamma
\\
\Mod^\eq(\CO_{\CB_q})
\ar[r]
&
\Mod(\CO_{\CB_q}),
}}
\end{equation}
where the upper horizontal arrow is a forgetful functor and the lower horizontal arrow is the functor induced by it.
\subsection{}
We define a functor
\begin{align}
\label{eq:fiber1}
\prescript{G_q}{}{(}\bullet):\Mod^\eq(\CO_{G_q},B_q)\to \Comod(O(B_q))
\end{align}
by
\[
\prescript{G_q}{}{M}=\{m\in M\mid \gamma_M(m)=m\otimes1\}.
\]
We have also a functor 
\begin{align}
\label{eq:Upsilon}
O(G_q)\otimes(\bullet):
\Comod(O(B_q))\to
\Mod^\eq(\CO_{G_q},B_q)
\end{align}
in the inverse direction 
sending $L\in\Comod(O_q	(G))$ to 
$O(G_q)\otimes L$, 
where $O(G_q)\otimes L$ is regarded as an object of 
$\Mod^\eq(\CO_{G_q},B_q)$ via 
the left $O(G_q)$-module structure given by the left multiplication on the first factor
and the left $O(B_q)$-comodule structure given by the diagonal coaction on the tensor product.

By \cite[Section 3.5]{BK} we have the following.
\begin{proposition}
\label{prop:xi}
The functor \eqref{eq:fiber1} gives an equivalence of categories.
Its inverse is given by \eqref{eq:Upsilon}.
\end{proposition}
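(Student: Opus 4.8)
The plan is to exhibit the two functors $\prescript{G_q}{}{(\bullet)}$ and $O(G_q)\otimes(\bullet)$ as quasi-inverse to each other by constructing explicit natural isomorphisms in both directions. First I would verify that the two functors are well defined, i.e.\ that $\prescript{G_q}{}{M}$ is stable under the $O(B_q)$-coaction $\beta_M$ (this uses that $\beta_M$ and $\gamma_M$ commute, so for $m\in\prescript{G_q}{}{M}$ the element $\beta_M(m)$ is again $\gamma_M$-invariant on the $M$-factor), and dually that $O(G_q)\otimes L$ with the stated structures really lies in $\Mod^\eq(\CO_{G_q},B_q)$ (the left $O(G_q)$-module structure on the first factor and the diagonal $O(B_q)$-coaction are compatible because $\res:O(G_q)\to O(B_q)$ is a Hopf algebra map, and the right $O(G_q)$-coaction $\gamma(\varphi\otimes\ell)=\varphi_{(0)}\otimes\ell)\otimes\varphi_{(1)}$ obviously commutes with $\beta$ and is $O(G_q)$-linear on the first factor, so that $\prescript{G_q}{}{\bigl(O(G_q)\otimes L\bigr)}=\varepsilon\otimes L\cong L$). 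This last computation already gives one of the two natural isomorphisms, $\prescript{G_q}{}{(\bullet)}\circ\bigl(O(G_q)\otimes(\bullet)\bigr)\cong\Id_{\Comod(O(B_q))}$, essentially for free.

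The substantive direction is $\bigl(O(G_q)\otimes(\bullet)\bigr)\circ\prescript{G_q}{}{(\bullet)}\cong\Id_{\Mod^\eq(\CO_{G_q},B_q)}$. Given $M\in\Mod^\eq(\CO_{G_q},B_q)$, the $O(G_q)$-action gives a map
\[
\mu_M:O(G_q)\otimes\prescript{G_q}{}{M}\to M,\qquad \varphi\otimes m\mapsto \varphi m,
\]
which one checks is a morphism in $\Mod^\eq(\CO_{G_q},B_q)$ (linearity over $O(G_q)$ on the first factor is the module axiom; compatibility with $\beta$ uses that the $O(G_q)$-action is a map of $O(B_q)$-comodules, which is part of the definition of $\Mod(\CO_{G_q},B_q)$; compatibility with $\gamma$ uses the $O(G_q)$-linearity of $\gamma_M$ together with $\gamma_M(m)=m\otimes1$ for $m\in\prescript{G_q}{}{M}$). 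The claim is that $\mu_M$ is an isomorphism. This is precisely a faithfully-flat-descent / Galois-type statement: $M$ being a $\gamma$-equivariant $O(G_q)$-module means $M$ descends along the "principal $G_q$-bundle" $\Spec O(G_q)\to\mathrm{pt}$, and the coinvariants $\prescript{G_q}{}{M}$ are the fibre. I expect this to be the main obstacle, and the way I would handle it is to reduce to the structural fact that $O(G_q)$ is itself a $\gamma$-equivariant (in fact cofree) $O(G_q)$-module with $\prescript{G_q}{}{O(G_q)}=\BF$: indeed the comultiplication $\Delta:O(G_q)\to O(G_q)\otimes O(G_q)$ exhibits $O(G_q)$ as $O(G_q)\otimes\BF$ in $\Mod^\eq(\CO_{G_q},B_q)$, so $\mu_{O(G_q)}$ is an isomorphism; since every object of $\Mod^\eq(\CO_{G_q},B_q)$ admits (by cofreeness of comodules) a presentation by copies of $O(G_q)\otimes L$-type objects, and since both functors and the natural transformation $\mu$ are compatible with the relevant (co)limits, the general case follows by a standard comodule-algebra argument.

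Concretely, for the comodule-algebra argument I would use that $M$, as an $O(G_q)$-comodule via $\gamma_M$, embeds into a cofree comodule $M\hookrightarrow \underline M\otimes O(G_q)$ (where $\underline M$ is the underlying space), with the embedding and its cokernel living in $\Mod^\eq(\CO_{G_q},B_q)$ after equipping $\underline M\otimes O(G_q)$ with the evident structures; applying $\prescript{G_q}{}{(\bullet)}$ (which is exact, being the equalizer of $\gamma_M$ and $m\mapsto m\otimes1$, hence left exact, and right exact here by the injectivity/cofreeness setup) and then $O(G_q)\otimes(\bullet)$ (exact), and comparing with the identity via the five lemma, reduces the isomorphism claim for $M$ to the same claim for cofree objects $\underline M\otimes O(G_q)$, which we have already settled. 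This yields $\mu_M$ an isomorphism for all $M$, and naturality is immediate from the constructions; combined with the first paragraph this proves that \eqref{eq:fiber1} and \eqref{eq:Upsilon} are quasi-inverse equivalences, as asserted. (Alternatively, one may simply cite \cite[Section 3.5]{BK} for the whole statement, as indicated, and present the above only as the idea of proof.)
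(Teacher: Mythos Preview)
Your proposal is correct, and as you note at the end, the paper itself does not prove this proposition---it merely cites \cite[Section~3.5]{BK}. So there is nothing in the paper to compare against; you have supplied a proof where the paper outsources one.

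One remark on your approach: the substantive direction ($\mu_M$ an isomorphism) is, after forgetting the $O(B_q)$-coaction (harmless, since $\beta_M$ and $\gamma_M$ commute), precisely the Fundamental Theorem of Hopf Modules for the Hopf algebra $O(G_q)$: a left $O(G_q)$-module $M$ equipped with a right $O(G_q)$-comodule structure satisfying $\gamma_M(\varphi m)=\Delta(\varphi)\gamma_M(m)$ is free on its coinvariants. That theorem has a direct proof by writing down an explicit inverse to $\mu_M$ using the antipode and checking the two composites by the antipode axiom---no exactness or resolution arguments are needed. Your route through cofree embeddings and the five lemma also works, but it requires the additional step of verifying that cofree comodules are injective in the relevant category (so that the short exact sequence survives $\prescript{G_q}{}{(\bullet)}$); you gesture at this with ``right exact here by the injectivity/cofreeness setup,'' which is correct but would itself need a line of justification. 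The direct antipode argument sidesteps this entirely and is what \cite{BK} actually does.
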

\subsection{}
\label{subsec:Ind}
We define a functor
\[
(\bullet)^\dagger:
\Comod^r(O(B_q))
\to
\Comod(O(B_q))
\qquad
(M\mapsto M^\dagger)
\]
as follows.
As an $\BF$-module we have
$M\cong M^\dagger$ 
($m\leftrightarrow m^\dagger$).
The left $O(B_q)$-comodule structure 
$\beta^\dagger:M^\dagger\to O(B_q)\otimes M^\dagger$ 
of $M^\dagger$ is 
defined from
the right $O(B_q)$-comodule structure 
$\beta:M\to M\otimes O(B_q)$ 
of $M$ by 
\[
\beta(m)=\sum_rm_r\otimes\psi_r
\;\;
\Longrightarrow
\;\;
\beta^\dagger(m^\dagger)
=
\sum_r S^{-1}\psi_r\otimes m_r^\dagger.
\]
It is easily seen that the composite of
\begin{align*}
\Comod^r(O(B_q))
&
\xrightarrow{(\bullet)^\dagger}
\Comod(O(B_q))
\xrightarrow{O(G_q)\otimes(\bullet)}
\Mod^\eq(\CO_{G_q},B_q)
\\&
\xrightarrow{(\bullet)^{B_q}}
\Comod^r(O(G_q))
\end{align*}
is nothing but the induction functor
\[
\Ind:
\Comod^r(O(B_q))
\to
\Comod^r(O(G_q))
\]
investigated in \cite{APW}.

Denote by $\BF'_\lambda=\BF 1'_\lambda$ 
the one-dimensional right $O(B_q)$-comodule given by
\[
\BF'_\lambda\to\BF'_\lambda\otimes O(B_q)
\qquad(1'_\lambda\mapsto 1'_\lambda\otimes \tchi_\lambda).
\]
Proposition \ref{prop:BW} also follows from the well-known fact
\[
\Ind(\BF'_\lambda)
=
\begin{cases}
V(\lambda)
\qquad&(\lambda\in\Lambda^+)
\\
0&(\lambda\notin\Lambda^+)
\end{cases}
\]
for $\lambda\in\Lambda$
in view of Proposition \ref{prop:AZBKeq}.

\subsection{}
Let $K\in\Mod^\eq_\Lambda({A_q})$.
Then $\BF\otimes_{A_q}K$ with respect to the restriction $\varepsilon|_{A_q}:{A_q}\to\BF$ of the counit $\varepsilon$ of $O(G_q)$ is naturally endowed with a left $O(B_q)$-comodule structure induced by that of $K$.
This defines a functor
\begin{equation}
\label{eq:eq-inv}
\BF\otimes_{A_q}(\bullet):
\Mod^\eq_\Lambda({A_q})\to\Comod(O(B_q)).
\end{equation}
\begin{lemma}
\label{lem:equ-fiber}
For $K\in\Mod^\eq_\Lambda({A_q})$ 
we have a canonical isomorphism
\[
\prescript{G_q}{}{(}O(G_q)\otimes_{A_q}K)
\cong\BF\otimes_{A_q}K
\]
in $\Comod(O(B_q))$.
\end{lemma}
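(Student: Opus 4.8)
The plan is to unwind the two functors in the statement and identify them explicitly. On the left-hand side, by definition $\prescript{G_q}{}{(}O(G_q)\otimes_{A_q}K)$ is the subspace of $O(G_q)\otimes_{A_q}K$ on which the right $O(G_q)$-coaction $\gamma_{O(G_q)\otimes_{A_q}K}$ is trivial; recall this coaction sends $\varphi\otimes k$ (with $\gamma_K(k)=\sum_r k_r\otimes\psi_r$) to $\sum_r(\varphi_{(0)}\otimes k_r)\otimes\varphi_{(1)}\psi_r$. On the right-hand side, $\BF\otimes_{A_q}K$ is the base change along $\varepsilon|_{A_q}:{A_q}\to\BF$. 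First I would construct the comparison map $O(G_q)\otimes_{A_q}K\to\BF\otimes_{A_q}K$ induced by $\varepsilon\otimes\id_K$ (which is well defined since $\varepsilon$ is an algebra homomorphism restricting to $\varepsilon|_{A_q}$), and check that it is a morphism of left $O(B_q)$-comodules and that its restriction to $\prescript{G_q}{}{(}O(G_q)\otimes_{A_q}K)$ lands where we want it; the coaction on $\BF\otimes_{A_q}K$ is exactly the one in \eqref{eq:eq-inv}, so naturality is automatic.

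Next I would build the inverse. The natural candidate is $\BF\otimes_{A_q}K\to O(G_q)\otimes_{A_q}K$ sending $1\otimes k$ to (the class of) $1\otimes k$, where $1\in O(G_q)$ is the unit; one must check this is well defined over the tensor product, and — this is the crucial point — that its image actually lies in the $G_q$-fixed part. For that one uses condition (b) in the definition of $\Mod^\eq_\Lambda({A_q})$, i.e.\ the compatibility $\gamma_K(\varphi k)=\Delta(\varphi)\gamma_K(k)$ for $\varphi\in A_q$: the point is that modulo the relations defining $O(G_q)\otimes_{A_q}K$, the element $1\otimes k$ can be rewritten so that the right $O(G_q)$-coaction becomes trivial, because the ``extra'' coaction on the $K$-factor gets absorbed into the left $A_q$-action on the $O(G_q)$-factor via $\Delta$ and then killed by passing to $O(G_q)\otimes_{A_q}(\bullet)$. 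I would verify the two composites are identities by a direct Sweedler computation, using $(\varepsilon\otimes\id)\Delta=\id$ on $O(G_q)$.

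The step I expect to be the main obstacle is precisely the well-definedness and $G_q$-invariance of the map $\BF\otimes_{A_q}K\to\prescript{G_q}{}{(}O(G_q)\otimes_{A_q}K)$: one is manipulating elements in a tensor product over the non-commutative ring $A_q$, and the identity $\gamma_{O(G_q)\otimes_{A_q}K}(1\otimes k)=1\otimes k\otimes 1$ in the quotient is not literally true in $O(G_q)\otimes K$ but only after imposing the $A_q$-relations; making this rigorous requires choosing, for each $k$, a representation reflecting $\gamma_K(k)\in K\otimes O(G_q)$ and moving the $O(G_q)$-component across the tensor-over-$A_q$ using (b). A clean way to package this is to note that $\varepsilon\otimes\id_K$ and $1\otimes(\bullet)$ realize $\BF\otimes_{A_q}K$ and $\prescript{G_q}{}{(}O(G_q)\otimes_{A_q}K)$ as, respectively, the cokernel and the $G_q$-invariants of the same two-term complex, and then invoke that $(\bullet)^{G_q}$ (being $\prescript{G_q}{}{(}\bullet)$) is exact here — or simply that $O(G_q)$ is faithfully flat over $\BF$ via $\varepsilon$ in the relevant sense — to conclude. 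Once the two maps are shown mutually inverse, naturality in $K$ is clear since every construction used is functorial, which completes the proof.
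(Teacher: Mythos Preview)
Your forward map $(\varepsilon\otimes\id_K)$ restricted to $\prescript{G_q}{}{(}O(G_q)\otimes_{A_q}K)$ is the right one, but the proposed inverse $1\otimes k\mapsto 1\otimes k$ is broken on both counts you flag, and condition~(b) cannot repair it. For well-definedness, take $K=A_q$ itself: then $O(G_q)\otimes_{A_q}A_q\cong O(G_q)$ while $\BF\otimes_{A_q}A_q\cong\BF$, and your map would have to send $\varepsilon(\varphi)$ to $\varphi$, which is nonsense; concretely, $1\otimes\varphi=0$ in $\BF\otimes_{A_q}A_q$ for $\varphi\in\Ker(\varepsilon|_{A_q})$ but $1\otimes\varphi=\varphi\ne0$ in $O(G_q)\otimes_{A_q}A_q$. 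For $G_q$-invariance, the diagonal coaction gives $\gamma(1\otimes k)=\sum_r(1\otimes k_r)\otimes\psi_r$ with $\gamma_K(k)=\sum_r k_r\otimes\psi_r$, which equals $(1\otimes k)\otimes1$ only when $k$ is already invariant. Condition~(b) is a compatibility between the $A_q$-\emph{action} and the $O(G_q)$-coaction on $K$; it lets you move factors of $A_q$ across $\otimes_{A_q}$, but the $\psi_r$ lie in $O(G_q)$, not in $A_q$, so there is nothing to ``absorb''. Your fallback (``cokernel and invariants of the same two-term complex'', faithful flatness of $\varepsilon$) is not a proof as stated: no complex is specified, and it is unclear how exactness of $\prescript{G_q}{}{(}\bullet)$ would identify a kernel-type object with a cokernel-type one.

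The paper sidesteps the construction of an explicit inverse entirely. It invokes Proposition~\ref{prop:xi}, which says $\prescript{G_q}{}{(}\bullet)$ is an equivalence $\Mod^\eq(\CO_{G_q},B_q)\simto\Comod(O(B_q))$ with inverse $O(G_q)\otimes(\bullet)$. Hence there is a canonical $L\in\Comod(O(B_q))$ with $O(G_q)\otimes_{A_q}K\cong O(G_q)\otimes L$ in $\Mod^\eq(\CO_{G_q},B_q)$, and both sides of the lemma are then identified with $L$: on the left $\prescript{G_q}{}{(}O(G_q)\otimes L)\cong L$, and on the right
\[
\BF\otimes_{A_q}K\;\cong\;\BF\otimes_{O(G_q)}(O(G_q)\otimes_{A_q}K)\;\cong\;\BF\otimes_{O(G_q)}(O(G_q)\otimes L)\;\cong\;L.
\]
If you want to salvage a direct argument, the honest inverse is not $k\mapsto 1\otimes k$ but rather the map coming from the isomorphism $O(G_q)\otimes_{A_q}K\cong O(G_q)\otimes L$ of Proposition~\ref{prop:xi}; writing it as an explicit formula in $k$ requires exactly that equivalence, so you may as well quote it.
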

\begin{proof}
By Proposition \ref{prop:xi} we can canonically associate to $K$ an object $L$ of $\Comod(O(B_q))$ such that 
$O(G_q)\otimes_{A_q}K\cong O(G_q)\otimes L$.
Then we have canonical isomorphisms
\begin{align*}
&
\prescript{G_q}{}{(}O(G_q)\otimes_{A_q}K)
\cong
\prescript{G_q}{}(O(G_q)\otimes L)\cong L,
\\
&
\BF\otimes_{A_q}K
\cong
\BF\otimes_{O(G_q)}(O(G_q)\otimes_{A_q}K)
\cong
\BF\otimes_{O(G_q)}(O(G_q)\otimes L)
\cong
L.
\end{align*}
\end{proof}
By Proposition \ref{prop:AZBKeq}, 
Proposition \ref{prop:xi}, 
Lemma \ref{lem:equ-fiber} 
we have the following.
\begin{proposition}
\label{prop:iota}
The functor \eqref{eq:eq-inv} induces an equivalence
\begin{equation}
\label{eq:iota}
\iota_q^*:
\Mod^\eq(\CO_{\CB_q})\simto \Comod(O(B_q)).
\end{equation}
Moreover, we have the following commutative diagram:
\begin{equation}
\vcenter{
\xymatrix{
   \Mod^\eq(\CO_{\CB_q})
\ar[rr]_\cong^{p_q^*}
\ar[dr]^{\iota_q^*}_\cong
&
&
\Mod^\eq(\CO_{G_q},B_q)
\ar[dl]_{\prescript{G_q}{}{(}\bullet)}^\cong
\\
   &
  \Comod(O(B_q)).
}}
\end{equation}
\end{proposition}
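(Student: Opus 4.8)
The plan is to exhibit the equivalence \eqref{eq:iota} and the triangle by assembling the three functorial facts already in hand. First I would recall that Proposition \ref{prop:AZBKeq} gives the equivalence $p_q^*:\Mod^\eq(\CO_{\CB_q})\simto\Mod^\eq(\CO_{G_q},B_q)$ and that Proposition \ref{prop:xi} gives the equivalence $\prescript{G_q}{}{(}\bullet):\Mod^\eq(\CO_{G_q},B_q)\simto\Comod(O(B_q))$. Composing these two produces \emph{some} equivalence $\Mod^\eq(\CO_{\CB_q})\simto\Comod(O(B_q))$; the content of the proposition is that this composite is (canonically isomorphic to) the functor $\iota_q^*$ \emph{induced by} the explicit functor \eqref{eq:eq-inv}, $\BF\otimes_{A_q}(\bullet)$, which is the point at which the triangle becomes a genuine assertion rather than a tautology.

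The key step is therefore to check that $\prescript{G_q}{}{(}\bullet)\circ p_q^*$, precomposed with $\varpi:\Mod^\eq_\Lambda({A_q})\to\Mod^\eq(\CO_{\CB_q})$, agrees with $\BF\otimes_{A_q}(\bullet)$ up to natural isomorphism. Here I would use the commutative triangle in Section~6 (the one with vertices $\Mod^\eq_\Lambda({A_q})$, $\Mod^\eq(\CO_{\CB_q})$, $\Mod^\eq(\CO_{G_q},B_q)$), which says $p_q^*\circ\varpi\cong O(G_q)\otimes_{A_q}(\bullet)$. Thus for $K\in\Mod^\eq_\Lambda({A_q})$ we get $\prescript{G_q}{}{(}p_q^*(\varpi K))\cong\prescript{G_q}{}{(}O(G_q)\otimes_{A_q}K)$, and Lemma \ref{lem:equ-fiber} identifies the right-hand side with $\BF\otimes_{A_q}K$ in $\Comod(O(B_q))$, naturally in $K$. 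Since $\varpi$ is essentially surjective and $\BF\otimes_{A_q}(\bullet)$ kills $\Mod^\eq_\Lambda({A_q})\cap\Tor_{\Lambda^+}({A_q})$ (because $O(G_q)\otimes_{A_q}M=0$ for $M\in\Tor_{\Lambda^+}({A_q})$ by Corollary \ref{cor:equivalence}(iv), hence $\BF\otimes_{A_q}M\cong\BF\otimes_{O(G_q)}(O(G_q)\otimes_{A_q}M)=0$ as in the proof of Lemma \ref{lem:equ-fiber}), the universality of the localization yields a well-defined functor $\iota_q^*$ on $\Mod^\eq(\CO_{\CB_q})$ fitting into the required triangle. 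Being naturally isomorphic to the composite of two equivalences, $\iota_q^*$ is itself an equivalence.

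The main obstacle, such as it is, is bookkeeping: one must make sure that all the identifications—$p_q^*\circ\varpi\cong O(G_q)\otimes_{A_q}(\bullet)$, the isomorphism of Lemma \ref{lem:equ-fiber}, and the comodule structures carried along through Proposition \ref{prop:xi}—are natural in $K$ and compatible with one another, so that the induced $\iota_q^*$ is genuinely well defined on the quotient category and the diagram commutes on the nose (not merely object-wise). Nothing new needs to be proved; the argument is a diagram chase gluing Propositions \ref{prop:AZBKeq} and \ref{prop:xi} and Lemma \ref{lem:equ-fiber}, and the only care required is to invoke the universal property of $\Sigma^{-1}$ correctly and to note that the three equivalences in the triangle are mutually inverse-compatible, which is immediate from Corollary \ref{cor:equivalence}(i) and Proposition \ref{prop:xi}.
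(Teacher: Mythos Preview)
Your proposal is correct and follows precisely the paper's own approach: the paper simply states that the proposition follows from Proposition~\ref{prop:AZBKeq}, Proposition~\ref{prop:xi}, and Lemma~\ref{lem:equ-fiber}, without further elaboration. You have spelled out the bookkeeping (in particular the use of Corollary~\ref{cor:equivalence}(iv) to descend $\BF\otimes_{A_q}(\bullet)$ through the localization) more carefully than the paper does, but the ingredients and the logical structure are identical.
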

In particular, \eqref{eq:eq-inv} is an exact functor 
by $\BF\otimes_{A_q}(\bullet)=\iota_q^*\circ\varpi$.\begin{remark}
At $q=1$ we have a functor 
\[
\prescript{G}{}{(}\bullet):
\Mod^\eq(\CO_{G},B)\simto 
\Comod(O(B))
\]
defined similarly to \eqref{eq:fiber1}.
This corresponds under 
the equivalence \eqref{eq:1-O-eq} 
to the inverse image functor 
\[
\iota^*:\Mod^\eq(\CO_\CB)
\simto\Comod(O(B))
\]
for the embedding $\iota:\{Be\}\hookrightarrow\CB=B\backslash G$.
Our $\iota_q^*$ above is a quantum analogue of $\iota^*$.
\end{remark}

\section{$\DD$-modules in the $\Proj$ picture}
\subsection{}
Recall that ${A_q}$ is a $(U_q(\Gg),U_q(\Gh))$-submodule of the $U_q(\Gg)$-bimodule $O(G_q)$.
For $\varphi\in {A_q}$, $u\in U_q(\Gg)$, $h\in U_q(\Gh)$ we define 
$\ell_\varphi, \deru_u, \sigma_h\in \End_\BF({A_q})$ by
\[
\ell_\varphi(\psi)=\varphi\psi,
\quad
\deru_u(\psi)=u\cdot\psi,
\quad
\sigma_h(\psi)=\psi\cdot h
\]
for $\psi\in {A_q}$.
Then we have
\begin{align}
\label{eq:rel0}
&\ell_1=\deru_1=\sigma_1=\id,
\\
\label{eq:rel1}
&\ell_\varphi\ell_\psi=\ell_{\varphi\psi}
&(\varphi, \psi\in {A_q}),
\\
\label{eq:rel2}
&
\deru_u\deru_{u'}=\deru_{uu'}
&(u, u'\in U_q(\Gg)),
\\
\label{eq:rel3}
&
\sigma_h\sigma_{h'}=\sigma_{hh'}
&(h, h'\in U_q(\Gh)),
\\
\label{eq:rel4}
&\deru_u\ell_\varphi=\sum_{(u)}
\ell_{u_{(0)}\cdot\varphi}\deru_{u_{(1)}}
&(u\in U_q(\Gg),\; \varphi\in {A_q}),
\\
\label{eq:rel5}
&\sigma_h\deru_u=
\deru_u\sigma_h
&(h\in U_q(\Gh),\;  u\in U_q(\Gg)),
\\
\label{eq:rel6}
&\sigma_h\ell_\varphi=
\sum_{(h)}
\chi_\lambda(h_{(0)})\ell_\varphi\sigma_{h_{(1)}}
&(h\in U_q(\Gh),\; \lambda\in\Lambda,\;\varphi\in {A_q}(\lambda)).
\end{align}
It follows that
\begin{align}
\label{eq:4a}
&\ell_\varphi\deru_u=
\sum_{(u)}
\deru_{u_{(1)}}\ell_{(S^{-1}u_{(0)})\cdot\varphi}
&
(u\in U_q(\Gg),\; \varphi\in {A_q}),
\\
\label{eq:6a}
&\ell_\varphi\sigma_h=
\sum_{(h)}\chi_{-\lambda}(h_{(0)})\sigma_{h_{(1)}}
\ell_\varphi
\qquad&(h\in U_q(\Gh),\; \lambda\in\Lambda,\;\varphi\in {A_q}(\lambda)).
\end{align}

By \eqref{eq:z-on-V} we have
\begin{align}
\label{eq:zsigma}
\deru_z=&\sigma_{\Xi(z)}
\qquad&(z\in Z(U_q(\Gg))).
\end{align}

We denote by $\Df$ the subalgebra of $\End_\BF({A_q})$ generated by 
$\ell_\varphi, \deru_u, \sigma_{h}$ 
for 
$\varphi\in {A_q}$, $u\in \Uf(\Gg)$, $h\in U_q(\Gh)$.
\begin{remark}
In \cite{T0} 
we used, instead of $\Df$, a larger subalgebra $D$ of $\End_\BF({A_q})$ generated by 
$\ell_\varphi, \deru_u, \sigma_{h}$ 
for 
$\varphi\in {A_q}$, $u\in U_q(\Gg)$, $h\in U_q(\Gh)$.
The arguments in \cite{T0} for $D$ basically work for our $\Df$ as explained below.
\end{remark}
We sometimes regard ${A_q}$ as a subalgebra of $\Df$ by 
the embedding ${A_q}\ni\varphi\mapsto\ell_\varphi\in \Df$ (see Proposition \ref{prop:A} (i)).
For $\lambda\in\Lambda$ set 
\[
\Df(\lambda)=\{d\in \Df\mid d({A_q}(\mu))\subset {A_q}(\mu+\lambda)\;(\mu\in\Lambda)\}.
\]
Then we have
\[
\Df=\bigoplus_{\lambda\in\Lambda^+}\Df(\lambda),
\]
and $\Df$ turns out to be a $\Lambda$-graded algebra.

Similarly to \cite[Proposition 5.1]{T0} we have the following.
\begin{proposition}
\label{prop:OreD}
For any $w\in W$ the multiplicative set $S_w$ satisfies the left and right Ore conditions in $\Df$.
Moreover, the canonical homomorphism $\Df\to S_w^{-1}\Df$ is injective.
\end{proposition}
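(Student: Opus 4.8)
The plan is to reduce the Ore conditions for $\Df$ to those for $\CA_q$ itself, which are already available from Proposition \ref{prop:Ore}. The key structural fact needed is a normal form: every element $d\in\Df$ can be written as a finite sum $\sum_j \deru_{u_j}\sigma_{h_j}\ell_{\varphi_j}$ (or with the factors in some other fixed order), using the commutation relations \eqref{eq:rel4}, \eqref{eq:rel5}, \eqref{eq:rel6} together with \eqref{eq:zsigma} and Lemma \ref{lem:gen} to rewrite grading operators $\sigma_h$ for general $h\in U_q(\Gh)$ in terms of $\deru$'s on $\Uf(\Gg)$ and a bounded supply of $\sigma$'s. First I would fix such a presentation; the essential point is that $\Df$ is a filtered algebra whose associated graded object is a quotient of $\CA_q\otimes U$ for an appropriate algebra $U$ (built from $\Uf(\Gg)$ and $U_q(\Gh)$), with $\CA_q$ sitting as the ``multiplication'' part.

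Next I would verify the left Ore condition: given $d\in\Df$ and $s=c_{w,\lambda}\in S_w$, I must find $d'\in\Df$ and $s'\in S_w$ with $s'd=d's$. Write $d=\sum_j \deru_{u_j}\sigma_{h_j}\ell_{\varphi_j}$ in normal form. The relations \eqref{eq:rel4} and \eqref{eq:rel6} show how $\ell_{c_{w,\mu}}$ moves past $\deru_u$ and $\sigma_h$: moving $\ell_{c_{w,\mu}}$ to the left past $\deru_{u_j}$ produces terms $\ell_{(u_{j(0)}\cdot c_{w,\mu})}\deru_{u_{j(1)}}$, and since $\CA_q$ is a $U_q(\Gg)$-submodule of $O(G_q)$ with $c_{w,\mu}\in\CA_q(\mu)_{w^{-1}\mu}$ of extremal weight, for $\mu$ large (dominant) the element $u\cdot c_{w,\mu}$ lies in $\CA_q$ and, after multiplying by a further $c_{w,\nu}$, becomes divisible by a power of $c_{w,\mu+\nu}$ inside $S_w^{-1}\CA_q$ — this is exactly the kind of statement underlying Proposition \ref{prop:Ore}. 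Concretely I would first pass to $S_w^{-1}\CA_q$, observe that each $\ell_{u\cdot c_{w,\mu}}$ becomes $\ell_{c_{w,\mu}}\cdot(\text{unit adjustment})$ in a suitable sense, collect denominators, and then multiply through by a single large $c_{w,\rho}$ to clear them, producing the required $s'$, $d'$ in $\Df$ itself. The right Ore condition is handled symmetrically using \eqref{eq:4a} and \eqref{eq:6a}.

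For injectivity of $\Df\to S_w^{-1}\Df$: an element $d$ maps to zero iff $c_{w,\lambda}d=0$ in $\Df$ for some $\lambda\in\Lambda^+$ (left multiplication), i.e. $\ell_{c_{w,\lambda}}\circ d=0$ as an operator on $\CA_q$. For any $\psi\in\CA_q$ this gives $c_{w,\lambda}\cdot d(\psi)=0$ in $\CA_q$, and by Proposition \ref{prop:A}(i) (the domain property) together with $c_{w,\lambda}\neq0$ we conclude $d(\psi)=0$ for all $\psi$, hence $d=0$ since $\Df\subset\End_\BF(\CA_q)$. This mirrors Proposition \ref{prop:Ore}(ii) and \cite[Proposition 5.1]{T0}.

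The main obstacle I anticipate is the normal-form/PBW-type step: controlling what happens to $u\cdot c_{w,\mu}$ under the left $U_q(\Gg)$-action well enough to guarantee that, for $\mu$ sufficiently dominant, the relevant denominators in $S_w^{-1}\Df$ are all powers of a \emph{single} $c_{w,\rho}$ and can be cleared simultaneously while staying inside $\Df$ (not just inside $S_w^{-1}\End_\BF(\CA_q)$). This is where the finiteness built into $\Uf(\Gg)$ — that $\ad(U_q(\Gg))(u)$ is finite-dimensional — is crucial, since it bounds the weights appearing when $\deru_u$ is commuted past $\ell_{c_{w,\mu}}$, and it is precisely the reason the statement is phrased for $\Df$ rather than for the full $D$ of \cite{T0} (where the analogous bound is supplied differently). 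Once the normal form and this boundedness are in place, the argument is essentially a transcription of the proof of \cite[Proposition 5.1]{T0}, and I would cite that proof for the routine bookkeeping.
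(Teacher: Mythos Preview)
Your overall strategy is correct and matches what the paper does: it simply refers to \cite[Proposition 5.1]{T0}, whose proof proceeds exactly along the lines you sketch---reduce to the Ore condition in $A_q$ (Proposition \ref{prop:Ore}) by using the commutation relations \eqref{eq:rel4}--\eqref{eq:rel6}, \eqref{eq:4a}, \eqref{eq:6a} to move $\ell_{c_{w,\mu}}$ past the generators $\deru_u$ and $\sigma_h$, then clear the finitely many resulting $A_q$-coefficients by a common element of $S_w$. Your injectivity argument via the domain property of $A_q$ is exactly right.

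Two small corrections. First, the appeal to \eqref{eq:zsigma} and Lemma \ref{lem:gen} in establishing the normal form is unnecessary: $\sigma_h$ for $h\in U_q(\Gh)$ is already a generator of $\Df$, and the relations \eqref{eq:rel4}--\eqref{eq:6a} alone suffice to put any word in the generators into the form $\sum_j\ell_{\varphi_j}\deru_{u_j}\sigma_{h_j}$ (or any other fixed ordering), using only $\Delta(\Uf(\Gg))\subset U_q(\Gg)\otimes\Uf(\Gg)$ from \eqref{eq:Uf}. Second, your remark that the $\ad$-finiteness of $\Uf(\Gg)$ is ``precisely the reason the statement is phrased for $\Df$ rather than for the full $D$'' is a misconception: in \cite{T0} the Ore property is proved for the larger ring $D$ built from all of $U_q(\Gg)$, and the same argument goes through there because the relevant finiteness is just that $\Delta(u)$ is a finite sum for any $u\in U_q(\Gg)$---no $\ad$-finiteness is needed. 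The restriction to $\Df$ in this paper is for the comparison with the Backelin--Kremnizer category, not for the Ore argument.
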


\subsection{}
We define an abelian category 
$\Mod(\tDDf_{\CB_q})$ by
\begin{equation}
\label{eq:D}
\Mod(\tDDf_{\CB_q})
=
\Mod_\Lambda(\Df)/
(\Mod_\Lambda(\Df)\cap\Tor_{\Lambda^+}({A_q})),
\end{equation}
where 
$\Mod_\Lambda(\Df)\cap\Tor_{\Lambda^+}({A_q})$
denotes the subcategory of $\Mod_\Lambda(\Df)$
consisting of objects of $\Mod_\Lambda(\Df)$
which belongs to $\Tor_{\Lambda^+}({A_q})$ as an object of $\Mod_\Lambda({A_q})$.
For $\GM, \GN\in\Mod(\tDDf_{\CB_q})$ we denote by
$\Hom_{\tDDf_{\CB_q}}(\GM,\GN)$ the set of morphisms from $\GM$ to $\GN$.

The natural functor 
\begin{equation}
\label{eq:pi D}
\varpi_{\Df}:
\Mod_\Lambda(\Df)\to \Mod(\tDDf_{\CB_q})
\end{equation}
admits a right adjoint 
\begin{equation}
\label{eq:Gamma* D}
\tGamma_{\Df}:\Mod(\tDDf_{\CB_q})
\to \Mod_\Lambda(\Df),
\end{equation}
satisfying $\varpi_{\Df}\circ\tGamma_{\Df}\simto\Id$.
By the universal property  of the localization of categories the forgetful functor 
\[
\For:\Mod_\Lambda(\Df)
\to
\Mod_\Lambda({A_q})
\]
induces an exact functor
\[
\For:\Mod(\tDDf_{\CB_q})
\to
\Mod(\CO_{\CB_q})
\]
such that the following diagram is commutative:
\[
\vcenter{
\xymatrix{
\Mod_\Lambda(\Df)
\ar[r]^{\For}
\ar[d]_{\varpi_{\Df}}
&
\Mod_\Lambda({A_q})
\ar[d]^{\varpi}
\\
\Mod(\tDDf_{\CB_q})
\ar[r]^{\For}
&
\Mod(\CO_{\CB_q}).
}}
\]

The proof of the following result uses Proposition \ref{prop:patch} and Proposition \ref{prop:OreD} among other things 
(see \cite[Lemma 4.5]{T0}).
\begin{proposition}
\label{prop:Gamma-compati}
We have the following commutative diagram of functors:
\[
\vcenter{
\xymatrix{
\Mod(\tDDf_{\CB_q})
\ar[r]^{\For}
\ar[d]_{\tGamma_{\Df}}
&
\Mod(\CO_{\CB_q})
\ar[d]^{\tGamma}
\\
\Mod_\Lambda(\Df)
\ar[r]^{\For}
&
\Mod_\Lambda({A_q}).
}}
\]
\end{proposition}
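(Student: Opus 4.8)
The plan is to follow \cite[Lemma 4.5]{T0}, where the analogous statement is proved with $D$ in place of $\Df$; the argument there uses only the properties of $\Df$ recorded above, so it carries over. First I would construct the comparison morphism. From $\varpi\circ\For=\For\circ\varpi_{\Df}$ and $\varpi_{\Df}\circ\tGamma_{\Df}\simto\Id$ one obtains, for $\GM\in\Mod(\tDDf_{\CB_q})$, a canonical isomorphism $\varpi(\For(\tGamma_{\Df}\GM))\cong\For\GM$, and adjoining along $\varpi\dashv\tGamma$ produces a natural morphism
\[
\eta_{\GM}\colon\For(\tGamma_{\Df}\GM)\longrightarrow\tGamma(\For\GM)
\]
in $\Mod_\Lambda(A_q)$. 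The assertion is that $\eta$ is an isomorphism of functors, and the commutative square is then precisely this. Applying $\varpi$ and using the triangle identities shows $\varpi(\eta_{\GM})$ is an isomorphism, so $\Ker\eta_{\GM}$ and $\Cok\eta_{\GM}$ lie in $\Tor_{\Lambda^+}(A_q)$. Since $\tGamma(\For\GM)$, being in the essential image of $\tGamma$, is $\Tor_{\Lambda^+}(A_q)$-closed (hence torsion-free), it suffices to prove that $N:=\For(\tGamma_{\Df}\GM)$ is $\Tor_{\Lambda^+}(A_q)$-closed as well: torsion-freeness of $N$ forces $\eta_{\GM}$ injective, and applying the left exact functor $\tGamma\circ\varpi$ to $0\to N\to\tGamma(\For\GM)\to\Cok\eta_{\GM}\to0$, using $\tGamma\varpi(\Cok\eta_{\GM})=0$ and that both $N$ and $\tGamma(\For\GM)$ are closed, forces $\eta_{\GM}$ surjective.

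So the problem reduces to showing that $N=\For(\tGamma_{\Df}\GM)$ is $\Tor_{\Lambda^+}(A_q)$-closed, i.e. $\Hom^\gr_{A_q}(T,N)=\Ext^{1,\gr}_{A_q}(T,N)=0$ for every $T\in\Tor_{\Lambda^+}(A_q)$. Put $M=\tGamma_{\Df}\GM$; as an object of the essential image of $\tGamma_{\Df}$ it is torsion-free and closed for the localizing subcategory $\CT_{\Df}:=\Mod_\Lambda(\Df)\cap\Tor_{\Lambda^+}(A_q)$, so $\Hom^\gr_{\Df}(T',M)=\Ext^{1,\gr}_{\Df}(T',M)=0$ for $T'\in\CT_{\Df}$. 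I would transfer this to $\For M$ through the adjunction $(\Df\otimes_{A_q}(\bullet))\dashv\For$, using two facts: (a) $\Df\otimes_{A_q}T\in\CT_{\Df}$ for $T\in\Tor_{\Lambda^+}(A_q)$, which is immediate from \eqref{eq:rel1}, \eqref{eq:4a}, \eqref{eq:6a}, since commuting a left multiplication $\ell_{\varphi}$ past the generators $\ell,\deru_u,\sigma_h$ of $\Df$ never decreases the $\Lambda$-degree, so each $d\otimes t$ is annihilated by $A_q(\lambda)$ for $\lambda$ large; and (b) $\Tor_1^{A_q}(\Df,T)\in\CT_{\Df}$ for $T\in\Tor_{\Lambda^+}(A_q)$, which holds because $S_w^{-1}(\bullet)$ is exact and annihilates $\Tor_{\Lambda^+}(A_q)$, whence $S_w^{-1}\Tor_1^{A_q}(\Df,T)\cong\Tor_1^{S_w^{-1}A_q}(S_w^{-1}\Df,S_w^{-1}T)=0$ for all $w\in W$ — here Proposition \ref{prop:OreD} is used to write $S_w^{-1}\Df=S_w^{-1}A_q\otimes_{A_q}\Df$ — and because vanishing of all localizations $S_w^{-1}(\bullet)$ forces membership in $\Tor_{\Lambda^+}(A_q)$, which is a consequence of the patching relation \eqref{eq:Apatch} behind Proposition \ref{prop:patch}.

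Granting (a) and (b), the transfer goes as follows. One has $\Hom^\gr_{A_q}(T,\For M)=\Hom^\gr_{\Df}(\Df\otimes_{A_q}T,M)=0$ by (a) and torsion-freeness of $M$. For the $\Ext^1$-vanishing, start from an extension $0\to\For M\to E\to T\to0$ in $\Mod_\Lambda(A_q)$ and apply $\Df\otimes_{A_q}(\bullet)$: by the long exact sequence the kernel of $\Df\otimes_{A_q}\For M\to\Df\otimes_{A_q}E$ is a quotient of $\Tor_1^{A_q}(\Df,T)$, hence lies in $\CT_{\Df}$ by (b), so the action map $\Df\otimes_{A_q}\For M\to M$ (a $\Df$-linear map) annihilates it, being a torsion submodule mapped into the torsion-free $M$; pushing out along the action map yields an extension $0\to M\to Q\to\Df\otimes_{A_q}T\to0$ in $\Mod_\Lambda(\Df)$ with $\Df\otimes_{A_q}T\in\CT_{\Df}$ by (a), which splits because $M$ is $\CT_{\Df}$-closed, and pulling the splitting back through $E\to\Df\otimes_{A_q}E\to Q$ splits the original extension. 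Thus $\For M$ is $\Tor_{\Lambda^+}(A_q)$-closed, so $\eta$ is an isomorphism of functors, which is the asserted commutativity. The step I expect to be the main obstacle is fact (b), together with the routine but delicate bookkeeping required to keep everything compatible with the $\Lambda$-grading and with the shift functors $(\bullet)[\lambda]$ — exactly the content handled in \cite[Lemma 4.5]{T0}.
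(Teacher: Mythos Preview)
Your argument is correct and follows the approach the paper points to: the paper does not give its own proof but defers to \cite[Lemma 4.5]{T0}, singling out Proposition~\ref{prop:patch} and Proposition~\ref{prop:OreD} as the essential inputs, and these are exactly the two facts you invoke (patching via the $S_w$-localizations for (b), and the Ore property of $S_w$ in $\Df$ to identify $S_w^{-1}\Df$ with $S_w^{-1}A_q\otimes_{A_q}\Df$). One small imprecision worth tightening: your justification of (a) via ``commuting $\ell_\varphi$ past the generators'' is not literally what happens when the generator is another $\ell_\psi$; the clean statement is that every $d\in\Df(\mu)$ admits a right normal form $\sum_j\deru_{u_j}\sigma_{h_j}\ell_{\psi_j}$ with $\psi_j\in A_q(\mu)$, $\mu\in\Lambda^+$, and then \eqref{eq:4a}, \eqref{eq:6a} give $\ell_\varphi d=\sum_k d'_k\ell_{\varphi''_k}$ with $\varphi''_k\in A_q(\lambda+\mu)$, whence $(\ell_\varphi d)\otimes t=\sum_k d'_k\otimes\varphi''_k t=0$ for $\lambda$ large. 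The same computation also shows the $A_q$-torsion part of $\For M$ is a $\Df$-submodule, hence zero, which is what makes your reduction to ``$N$ closed'' go through.
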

In the rest of this paper we simply write $\varpi$ and $\tGamma$ for $\varpi_{\Df}$ and $\tGamma_{\Df}$ respectively.

\begin{remark}
\label{rem:Proj3}
Denote by $\DD_\CB$ the ring of differential operators on $\CB$.
We have also a sheaf of rings $\tDD_\CB$ on $\CB$ 
called the sheaf of universally twisted differential operators.
It contains the enveloping algebra $U(\Gh)$ of $\Gh$ in its center, and its specialization 
$\DD_{\CB,\lambda}:=\tDD_\CB\otimes_{U(\Gh)}\BC$ with respect to the algebra homomorphism $U(\Gh) \to \BC$ associated to $\lambda\in\Lambda$ is naturally isomorphic to 
$\CO_\CB(\lambda)\otimes_{\CO_\CB}\DD_\CB\otimes_{\CO_\CB}\CO_\CB(-\lambda)$
(see Remark \ref{rem:Proj1} for the notation).
Our 
$\Mod(\tDDf_{\CB_q})$ 
is a quantum analogue of the category
$\Mod(\tDD_{\CB})$ consisting of quasi-coherent $\tDD_{\CB}$-modules. 
\end{remark}

For $\lambda\in\Lambda$ we denote by 
\[
(\bullet)[\lambda]:
\Mod(\tDDf_{\CB_q})\to 
\Mod(\tDDf_{\CB_q})
\]
the functor induced by
\[ 
(\bullet)[\lambda]:
\Mod_\Lambda(\Df)\to 
\Mod_\Lambda(\Df).
\]

\subsection{}
\begin{lemma}
\label{lem:gpM}
For $\GM\in\Mod(\tDDf_{\CB_q})$ we have a natural isomorphism
\begin{equation}
\label{eq:gpM}
\tGamma(\GM)
\cong
\bigoplus_{\lambda\in\Lambda}
\Hom_{\Df}^\gr(\tGamma(\varpi \Df),\tGamma(\GM)[\lambda])
\end{equation}
of graded $\BF$-modules.
\end{lemma}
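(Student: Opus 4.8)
The plan is to realize $\tGamma(\GM)$ as a graded Hom out of the "structure sheaf" object $\varpi\Df$, exploiting that $\Df$, viewed as a graded left module over itself, is a projective generator-like object for the graded module category and that the grading shift functors $(\bullet)[\lambda]$ are auto-equivalences compatible with $\varpi$ and $\tGamma$. First I would recall that $\tGamma:\Mod(\tDDf_{\CB_q})\to\Mod_\Lambda(\Df)$ is right adjoint to $\varpi$, so that for any $\GM$ and any $\lambda\in\Lambda$ there is a natural isomorphism
\[
\Hom_{\tDDf_{\CB_q}}(\varpi(\Df[\lambda]),\GM)
\cong
\Hom_{\Df}^{\gr}(\Df[\lambda],\tGamma(\GM)).
\]
Next I would identify the right-hand side: since $\Df$ is free of rank one on the generator $1\in\Df(0)$ as a graded left $\Df$-module, a graded homomorphism $\Df[\lambda]\to\tGamma(\GM)$ is determined by the image of $1$, which must lie in the degree-$(-\lambda)$ (or degree-$\lambda$, depending on the sign convention in $(\bullet)[\lambda]$) component of $\tGamma(\GM)$; hence $\Hom_{\Df}^{\gr}(\Df[\lambda],\tGamma(\GM))\cong \tGamma(\GM)(-\lambda)$ as $\BF$-modules (more precisely as modules over the degree-zero part, but we only need the $\BF$-module structure here). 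Summing over $\lambda\in\Lambda$ then gives
\[
\bigoplus_{\lambda\in\Lambda}\Hom_{\Df}^{\gr}(\Df[\lambda],\tGamma(\GM))
\cong
\bigoplus_{\lambda\in\Lambda}\tGamma(\GM)(-\lambda)
=\tGamma(\GM),
\]
which is the desired formula once one rewrites $\Df[\lambda]$ via $\varpi\circ\tGamma\simto\Id$.

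The key step that converts this into the stated formula is to replace $\Df[\lambda]$ inside $\Hom_{\Df}^{\gr}$ by $\tGamma(\varpi\Df)[\lambda]$, and to move the shift inside the second argument. For the first replacement I would use the canonical morphism $\Df\to\tGamma(\varpi\Df)$ and argue that applying $\Hom_{\Df}^{\gr}(-,\tGamma(\GM))$ to it yields an isomorphism; this follows because $\tGamma(\GM)$ lies in the essential image of $\tGamma$ (every object does, up to the counit iso $\varpi\tGamma\simto\Id$), and for such modules $\Hom_{\Df}^{\gr}(M,\tGamma(\GM))\cong\Hom_{\tDDf_{\CB_q}}(\varpi M,\GM)$ depends on $M$ only through $\varpi M$, while $\varpi(\Df\to\tGamma(\varpi\Df))$ is an isomorphism by $\varpi\circ\tGamma\simto\Id$. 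For the second, I would use the fact, recorded in the excerpt, that $(\bullet)[\lambda]$ commutes with both $\varpi$ and $\tGamma$ and is an auto-equivalence, so that $\Hom_{\Df}^{\gr}(\tGamma(\varpi\Df)[\lambda],\tGamma(\GM))\cong\Hom_{\Df}^{\gr}(\tGamma(\varpi\Df),\tGamma(\GM)[-\lambda])$, and then re-index the sum over $\lambda$ to absorb the sign. The naturality in $\GM$ is automatic from the naturality of the adjunction and of all the canonical morphisms involved.

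The main obstacle I anticipate is bookkeeping rather than anything deep: one must be careful that the isomorphism $\Hom_{\Df}^{\gr}(\Df[\lambda],N)\cong N(\pm\lambda)$ is the correct one for the convention $(M[\lambda])(\delta)=M(\lambda+\delta)$ used in \ref{subsec:notation}, and that after summing, the reindexing $\lambda\mapsto-\lambda$ does not introduce a discrepancy — in particular one should check that $\bigoplus_\lambda\tGamma(\GM)(\lambda)$ really reassembles to $\tGamma(\GM)$ with its given grading and not a twisted version of it. A secondary, more substantive point is justifying that $\Hom_{\Df}^{\gr}(M,\tGamma(\GM))$ only sees $\varpi M$: this is exactly the adjunction $\Hom_{\Df}^{\gr}(M,\tGamma(\GM))\cong\Hom_{\tDDf_{\CB_q}}(\varpi M,\varpi\tGamma(\GM))\cong\Hom_{\tDDf_{\CB_q}}(\varpi M,\GM)$, using $\varpi\circ\tGamma\simto\Id$, so it is a formal consequence of material already in the excerpt, but it should be spelled out so that the chain of isomorphisms is visibly natural in both $M$ and $\GM$. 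Everything else is a routine assembly of the adjunction, the freeness of $\Df$ as a graded module over itself, and the compatibility of the shift functors with $\varpi$ and $\tGamma$.
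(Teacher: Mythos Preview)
Your proposal is correct and follows essentially the same approach as the paper: both arguments use the adjunction $\varpi\dashv\tGamma$, the freeness of $\Df$ as a graded module over itself (so that $\Hom_{\Df}^\gr(\Df,N)\cong N(0)$), the isomorphism $\varpi\circ\tGamma\simto\Id$, and the compatibility of the shift functors with $\varpi$ and $\tGamma$. The only organizational difference is that the paper places the shift on the second argument from the outset, writing the chain
\[
(\tGamma(\GM))(\lambda)\cong(\tGamma(\GM[\lambda]))(0)\cong\Hom_{\Df}^\gr(\Df,\tGamma(\GM[\lambda]))\cong\Hom_{\tDDf_{\CB_q}}(\varpi\Df,\GM[\lambda])\cong\Hom_{\Df}^\gr(\tGamma(\varpi\Df),\tGamma(\GM)[\lambda]),
\]
which lands directly on the desired $\lambda$-component and so avoids the reindexing $\lambda\mapsto-\lambda$ and the sign bookkeeping you flagged as a concern.
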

\begin{proof}
We have
\begin{align*}
&(\tGamma(\GM))(\lambda)
\cong
(\tGamma(\GM[\lambda]))(0)
\cong
\Hom_{\Df}^\gr(\Df,\tGamma(\GM[\lambda]))
\\
\cong&
\Hom_{\tDDf_{\CB_q}}(\varpi \Df,\GM[\lambda])
\cong
\Hom_{\tDDf_{\CB_q}}(\varpi\tGamma\varpi \Df,\GM[\lambda])
\\
\cong&
\Hom_{\Df}^\gr(\tGamma(\varpi \Df),\tGamma(\GM[\lambda]))
\cong
\Hom_{\Df}^\gr(\tGamma(\varpi \Df),\tGamma(\GM)[\lambda]).
\end{align*}
Here, we used $\varpi\circ \tGamma\simto\Id$ and the fact that 
$\tGamma$ is right adjoint to $\varpi$.
\end{proof}
Especially, we have an isomorphism
\begin{equation}
\label{eq:gpD}
\tGamma(\varpi \Df)
\cong
\bigoplus_{\lambda\in\Lambda}
\Hom_{\Df}^\gr(\tGamma(\varpi \Df),\tGamma(\varpi \Df)[\lambda])
\end{equation}
of $\Lambda$-graded $\BF$-modules.
Note that the right side of \eqref{eq:gpD} is endowed with 
a natural $\Lambda$-graded algebra structure 
given  by the composition of morphisms:
\begin{align*}
&
\Hom_{\Df}^\gr(\tGamma(\varpi {\Df}),\tGamma(\varpi {\Df})[\lambda])
\otimes
\Hom_{\Df}^\gr(\tGamma(\varpi {\Df}),\tGamma(\varpi {\Df})[\mu])
\\
\cong&
\Hom_{\Df}^\gr(\tGamma(\varpi {\Df})[\mu],\tGamma(\varpi {\Df})[\lambda+\mu])
\otimes
\Hom_{\Df}^\gr(\tGamma(\varpi {\Df}),\tGamma(\varpi {\Df})[\mu])
\\
\to&
\Hom_{\Df}^\gr(\tGamma(\varpi {\Df}),\tGamma(\varpi {\Df})[\lambda+\mu]).
\end{align*}
We regard $\tGamma(\varpi {\Df})$ as a 
$\Lambda$-graded algebra  by
\begin{equation}
\label{eq:gpD2}
\tGamma(\varpi {\Df})
=
\left(\bigoplus_{\lambda\in\Lambda}
\Hom_{\Df}^\gr(\tGamma(\varpi {\Df}),\tGamma(\varpi {\Df})[\lambda])
\right)^\op.
\end{equation}
Moreover, for $\GM\in\Mod(\tDDf_{\CB_q})$ 
the right side of \eqref{eq:gpM} is naturally a 
$\Lambda$-graded right module over the right-side of \eqref{eq:gpD} by the composition of morphisms.
Hence we obtain a natural $\Lambda$-graded 
left $\tGamma(\varpi {\Df})$-module structure of 
$\tGamma(\GM)$.

It is easily seen that the canonical homomorphism
\begin{equation}
\label{eq:DGammaD}
{\Df}\to \tGamma(\varpi {\Df})
\end{equation} 
(see \eqref{eq:canon}) is a homomorphism of $\Lambda$-graded algebras.
Moreover, for $\GM\in\Mod(\tDDf_{\CB_q})$
the $\tGamma(\varpi {\Df})$-module structure of 
$\tGamma(\GM)$ described above is compatible with
the ${\Df}$-module structure given by\eqref{eq:Gamma*  D} through \eqref{eq:DGammaD}.
In particular, the functor \eqref{eq:Gamma* D} lifts to 
\begin{equation}
\label{eq:GammaGamma*D}
\tGamma:
\Mod(\tDDf_{\CB_q})\to \Mod_\Lambda(\tGamma(\varpi {\Df}))
\end{equation}
through the canonical algebra homomorphism 
\eqref{eq:DGammaD}.
Note that
we have an equivalence 
\begin{align}
\label{eq:equiv2cat}
\Mod(\tDDf_{\CB_q})
:=&
\Mod_\Lambda({\Df})
/(\Mod_\Lambda({\Df})\cap\Tor_{\Lambda^+}({A_q}))
\\
\nonumber
\cong&
\Mod_\Lambda(\tGamma(\varpi {\Df}))
/(\Mod_\Lambda(\tGamma(\varpi {\Df}))\cap\Tor_{\Lambda^+}({A_q}))
\end{align}
given by the natural functor
$\Mod_\Lambda(\tGamma(\varpi {\Df}))
\to
\Mod_\Lambda({\Df})$
induced by \eqref{eq:DGammaD}, 
and the functor
$\Mod_\Lambda({\Df})\to
\Mod_\Lambda(\tGamma(\varpi {\Df}))$
sending $M\to \tGamma(\varpi M)$.

\begin{proposition}
The canonical algebra homomorphism
\eqref{eq:DGammaD} is injective.
\end{proposition}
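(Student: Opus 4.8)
The plan is to show that the canonical map $\Df \to \tGamma(\varpi\,\Df)$ of \eqref{eq:DGammaD} has trivial kernel by exhibiting, for each $w\in W$, a localization through which the map factors compatibly, and then invoking the patching property. Concretely, the kernel $\GK$ of \eqref{eq:canon} (applied to $M=\Df$) is by definition the largest submodule of $\Df$ that maps to $0$ in $\tGamma(\varpi\,\Df)$; equivalently, $\varpi\,\GK = 0$, i.e.\ $\GK \in \Tor_{\Lambda^+}({A_q})$ as an object of $\Mod_\Lambda({A_q})$. So the first step is to record that $\Ker(\eqref{eq:DGammaD})$ is a $\Lambda$-graded $\Df$-submodule of $\Df$ which is $\Lambda^+$-torsion over ${A_q}$.

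Next I would localize at $S_w$. By Proposition \ref{prop:OreD}, $S_w$ satisfies the Ore conditions in $\Df$ and $\Df \to S_w^{-1}\Df$ is injective; since $S_w^{-1}\GK = 0$ for any $\Lambda^+$-torsion module (the same computation that gives the exact functor $j_{w,q}^*$ of \eqref{eq:jw}), the image of $\GK$ in $S_w^{-1}\Df$ is zero. As $\Df$ injects into $\bigoplus_{w\in W} S_w^{-1}\Df$ — here one uses that the intersection over $w$ of the kernels $\Ker(\Df\to S_w^{-1}\Df)$ is a $\Lambda^+$-torsion submodule of $\Df$ which, being also a submodule of the torsion-free $\Df$ (torsion-free over ${A_q}$ because $\Df \supset {A_q}$ acts faithfully on ${A_q}$ by Proposition \ref{prop:A}(i), and $\Df \hookrightarrow \End_\BF({A_q})$), must vanish — we get $\GK \subset \bigcap_w \Ker(\Df \to S_w^{-1}\Df) = 0$. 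Alternatively, and perhaps more cleanly in the language already set up: apply $j_{w,q}^*$ to the morphism $\varpi\,\Df \to \varpi\,\tGamma(\varpi\,\Df) \simto \varpi\,\Df$ (the composite being the identity by \eqref{eq:piGamma}); since $j_{w,q}^*$ is exact and $\varpi\,\GK$ is already $0$, one concludes $\GK=0$ directly from the faithfulness of $\Df$ on ${A_q}$, without even needing the full patching proposition.

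I expect the main obstacle to be the verification that $\Df$, as a $\Lambda$-graded ${A_q}$-module, is $\Lambda^+$-torsion-free, i.e.\ that a nonzero graded $\Df$-submodule of $\Df$ cannot be annihilated by all sufficiently large ${A_q}(\lambda)$. This follows from $\Df \hookrightarrow \End_\BF({A_q})$: if $d \in \Df$ is nonzero then $d(\psi)\neq 0$ for some $\psi \in {A_q}$, and by Proposition \ref{prop:A}(i) left multiplication $\ell_{c_{w,\mu}}$ is injective on ${A_q}$, so $\ell_{c_{w,\mu}} d \neq 0$; hence no element of $S_w$, and a fortiori no ${A_q}(\lambda)$, kills $d$. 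Threading this faithfulness statement through the grading (so that it applies to graded submodules, using that $\Tor_{\Lambda^+}$-membership is detected element by element) is the only genuinely non-formal point; everything else is an assembly of \eqref{eq:piGamma}, Proposition \ref{prop:OreD}, and the exactness of $j_{w,q}^*$. I would present the short $j_{w,q}^*$-based argument as the main line and relegate the torsion-freeness of $\Df$ to a one-line observation citing Proposition \ref{prop:A}(i) and the inclusion $\Df \subset \End_\BF({A_q})$.
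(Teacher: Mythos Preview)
Your argument is correct, but the paper takes a shorter route. Rather than showing that the kernel $\GK$ is $\Lambda^+$-torsion and then proving $\Df$ is torsion-free, the paper observes that $A_q$ itself is an object of $\Mod_\Lambda(\Df)$, so by the lifted functor \eqref{eq:GammaGamma*D} the module $\tGamma(\varpi A_q)$ carries a $\tGamma(\varpi\Df)$-action compatible with \eqref{eq:DGammaD}. Invoking Proposition~\ref{prop:BW} ($\tGamma(\varpi A_q)\cong A_q$), one obtains a factorization $\Df\to\tGamma(\varpi\Df)\to\End_\BF(A_q)$ whose composite is the defining inclusion of $\Df$; injectivity follows at once.

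Both proofs ultimately rest on the embedding $\Df\hookrightarrow\End_\BF(A_q)$. Your version trades the appeal to Proposition~\ref{prop:BW} for the domain property Proposition~\ref{prop:A}(i), which makes it more self-contained but also longer: the excursion through Ore localization and Proposition~\ref{prop:OreD} is not actually needed, since (as you yourself note near the end) the whole proof reduces to ``kernel is torsion'' plus ``$\Df$ has no nonzero torsion elements over $A_q$''. If you keep your approach, I would drop the $S_w^{-1}$ discussion entirely and present only that two-line core. The paper's factorization argument, on the other hand, generalizes more readily to situations where one wants to compare $\Df$ with other rings acting on $A_q$.
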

\begin{proof}
Recall that ${\Df}$ has been defined as a subalgebra of $\End_\BF({A_q})$.
In particular ${A_q}$ can be regarded as an object of $\Mod_\Lambda({\Df})$.
Hence we have a $\tGamma(\varpi {\Df})$-module structure of $\tGamma(\varpi {A_q})$ compatible with \eqref{eq:DGammaD}.
By Proposition \ref{prop:BW} we have $\tGamma(\varpi {A_q})\cong {A_q}$, and hence we have a $\tGamma(\varpi {\Df})$-module structure of ${A_q}$ compatible with \eqref{eq:DGammaD}.
Then the injectivity of the compsite of 
${\Df}\to\tGamma(\varpi {\Df})\to\End_\BF({A_q})$ implies the desired result.
\end{proof}
\subsection{}
\label{subsec:ad-on-D}
We define the adjoint action of $U_q(\Gg)$ on ${\Df}$ by
\begin{equation}
\label{eq:ad-on-D}
\ad(u)(d)=\sum_{(u)}
\deru_{u_{(0)}}d\deru_{Su_{(1)}}
\qquad
(u\in U_q(\Gg), d\in {\Df}).
\end{equation}
It is integrable and hence we have a right $O(G_q)$-comodule structure of ${\Df}$.
Moreover, ${\Df}$ turns out to be an object of 
$\Mod_\Lambda^\eq({A_q})$ with respect to 
this right $O(G_q)$-comodule structure
and the left ${A_q}$-module structure given by left multiplicaiton.
\subsection{}
For $\lambda\in\Lambda$ we denote by 
$\Mod_{\Lambda}({\Df},\lambda)$ 
the full subcategory of 
$\Mod_\Lambda({\Df})$
consisting of $M\in\Mod_\Lambda({\Df})$
satisfying 
$\sigma_h|_{M(\mu)}=\chi_{\lambda+\mu}(h)\id$ for any $h\in U_q(\Gh)$, $\mu\in\Lambda$.
We set
\begin{equation}
\Mod(\DD^f_{\CB_q,\lambda})
=
\Mod_{\Lambda}({\Df},\lambda)
/
(\Mod_{\Lambda}({\Df},\lambda)\cap
\Tor_{\Lambda^+}({A_q})).
\end{equation}
Then $\Mod(\DD^f_{\CB_q,\lambda})$ is naturally regarded as a full subcategory of $\Mod(\tDDf_{\CB_q})$ (see \cite[Lemma 4.6]{T0}).
For $\GM\in \Mod(\DD^f_{\CB_q,\lambda})$ we have
$\tGamma(\GM)\in \Mod_{\Lambda}({\Df},\lambda)$, and the restriction of $\tGamma$ to $\Mod(\DD^f_{\CB_q,\lambda})$ is right adjoint to
$\varpi:\Mod_{\Lambda}({\Df},\lambda)
\to
\Mod(\DD^f_{\CB_q,\lambda})$ 
(see \cite[Lemma 5.3]{T0}).
\begin{remark}
The category 
$\Mod(\DD^f_{\CB_q,\lambda})$
is a quantum analogue of the category 
$\Mod(\DD_{\CB,\lambda})$ of quasi-coherent $\DD_{\CB,\lambda}$-modules
(see Remark \ref{rem:Proj3}).
\end{remark}
We also define 
$\Mod_\Lambda(\tGamma(\varpi {\Df}),\lambda)$ to be 
the full subcategory of 
$\Mod_\Lambda(\tGamma(\varpi {\Df}))$
consisting of $M\in\Mod_\Lambda(\tGamma(\varpi {\Df}))$
satisfying 
$\sigma_h|_{M(\mu)}=\chi_{\lambda+\mu}(h)\id$ for any $h\in U_q(\Gh)$, $\mu\in\Lambda$ 
with respect to \eqref{eq:DGammaD}.
We see easily that 
\eqref{eq:equiv2cat} induces 
the equivalence
\begin{align}
\label{eq:equiv2catlam}
\Mod(\DD^f_{\CB_q,\lambda})
:=&
\Mod_\Lambda({\Df},\lambda)
/(\Mod_\Lambda({\Df},\lambda)\cap\Tor_{\Lambda^+}({A_q}))
\\
\nonumber
\cong&
\Mod_\Lambda(\tGamma(\varpi {\Df}),\lambda)
/(\Mod_\Lambda(\tGamma(\varpi {\Df}),\lambda)\cap\Tor_{\Lambda^+}({A_q}))
\end{align}
of categories.

\section{$\DD$-modules in the pull-back picture}
\subsection{}
We can easily show the following.
\begin{lemma}
\label{lem:E}
\begin{itemize}
\item[(i)]
We have an $\BF$-algebra structure of
$O(G_q)\otimes U_q(\Gg)$ given by
\[
(\varphi_1\otimes u_1)(\varphi_2\otimes u_2)
=
\sum_{(u_1)}
\varphi_1(u_{1(0)}\cdot \varphi_2)\otimes u_{1(1)}u_2
\]
for $\varphi_1, \varphi_2\in O(G_q)$ 
and
$u_1, u_2\in U_q(\Gg)$.
\item[(ii)]
We have an $\BF$-algebra structure of
$O(G_q)\otimes U_q(\Gg)^\op$  given by
\[
(\varphi_1\otimes u_1^\circ)(\varphi_2\otimes u_2^\circ)
=
\sum_{(u_1)}
\varphi_1(\varphi_2\cdot u_{1(0)})\otimes u_{1(1)}^\circ u_2^\circ
\]
for $\varphi_1, \varphi_2\in O(G_q)$ 
and
$u_1, u_2\in U_q(\Gg)$.
\end{itemize}
\end{lemma}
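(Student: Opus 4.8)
The plan is to verify directly that the two formulas define associative, unital $\BF$-algebra structures, and to organize the bookkeeping so that the Hopf-algebra axioms are invoked only where genuinely needed. For part (i), I would first observe that $O(G_q)\otimes U_q(\Gg)$ is exactly a smash product: $U_q(\Gg)$ acts on the algebra $O(G_q)$ by $u\cdot\varphi$ (the left action from \eqref{eq:bimod}, which makes $O(G_q)$ a $U_q(\Gg)$-module algebra because the action is by the coproduct, i.e.\ $u\cdot(\varphi\psi)=\sum_{(u)}(u_{(0)}\cdot\varphi)(u_{(1)}\cdot\psi)$ and $u\cdot 1=\varepsilon(u)1$). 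Granting this, associativity and unitality are the standard smash-product computation: one expands $\bigl((\varphi_1\otimes u_1)(\varphi_2\otimes u_2)\bigr)(\varphi_3\otimes u_3)$ and $(\varphi_1\otimes u_1)\bigl((\varphi_2\otimes u_2)(\varphi_3\otimes u_3)\bigr)$, and checks they agree using coassociativity of $\Delta$ together with the module-algebra property applied to $u_{2}\cdot(\text{product})$; the unit is $1\otimes 1$, with $\varepsilon(u)1$ and the counit axiom $\sum_{(u)}\varepsilon(u_{(0)})u_{(1)}=u$ handling the two sides. So the only substantive point to record is that $u\cdot(\varphi\psi)=\sum_{(u)}(u_{(0)}\cdot\varphi)(u_{(1)}\cdot\psi)$, which follows because multiplication in $O(G_q)$ is dual to $\Delta_{U_q(\Gg)}$ and the left action in \eqref{eq:bimod} is $\langle u\cdot\varphi,z\rangle=\langle\varphi,zu\rangle$.

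For part (ii) I would do the same thing on the other side. Here one uses the \emph{right} action $\varphi\cdot u$ from \eqref{eq:bimod}, which makes $O(G_q)$ a module algebra over $U_q(\Gg)^{\op}$: the relevant identity is $(\varphi\psi)\cdot u=\sum_{(u)}(\varphi\cdot u_{(0)})(\psi\cdot u_{(1)})$ and $1\cdot u=\varepsilon(u)1$. Then $O(G_q)\otimes U_q(\Gg)^{\op}$ with the stated multiplication is the smash product of $O(G_q)$ with the Hopf algebra $U_q(\Gg)^{\op}$, whose coproduct is still $\Delta$ (since $\Delta$ is coalgebra-compatible and passing to the opposite algebra does not change the coalgebra structure — one only needs that $U_q(\Gg)^{\op}$ is a bialgebra, which it is because $\Delta$ is an algebra map $U_q(\Gg)^{\op}\to(U_q(\Gg)\otimes U_q(\Gg))^{\op}=U_q(\Gg)^{\op}\otimes U_q(\Gg)^{\op}$). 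Associativity and unitality then follow by the identical smash-product computation as in (i), now with $u_1^\circ u_2^\circ=(u_2u_1)^\circ$ tracked through the second tensor factor.

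The one genuine check underlying both parts — that the left (resp.\ right) $U_q(\Gg)$-action on $O(G_q)$ makes it a module algebra — is itself routine from the definitions, since it just says that the multiplication of $O(G_q)$, being the transpose of $\Delta_{U_q(\Gg)}$, intertwines the actions appropriately. I expect the main (very mild) obstacle to be purely notational: keeping the Sweedler indices and the opposite-multiplication $(u_1u_2)^\circ=u_2^\circ u_1^\circ$ straight through the triple-product expansion in part (ii), so that the coassociativity regrouping lands on the correct factors. Since the lemma is labelled as something that can be shown easily, I would present only the module-algebra identity explicitly and state that associativity and unitality are then the standard smash-product verification, omitting the expansion itself.
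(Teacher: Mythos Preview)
Your proposal is correct: both parts are precisely the smash-product construction $O(G_q)\#U_q(\Gg)$ and $O(G_q)\#U_q(\Gg)^{\op}$, and the module-algebra identities you single out (together with the bialgebra structure on $U_q(\Gg)^{\op}$ with the unchanged coproduct) are exactly what makes associativity and unitality routine. The paper itself gives no proof---it simply states the lemma with ``We can easily show the following''---so your verification is the intended one, and your suggestion to display only the module-algebra identity and invoke the standard smash-product argument is appropriate in length and level of detail.
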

We denote by $E_L$ (resp.\ $E_R$) the algebra 
$O(G_q)\otimes U_q(\Gg)$ 
(resp.\ $O(G_q)\otimes U_q(\Gg)^\op$) 
given in Lemma \ref{lem:E} (i) 
(resp.\ (ii)).
We identify 
$O(G_q)$ and $U_q(\Gg)$
(resp.\ 
$O(G_q)$ and $U_q(\Gg)^\op$)
with $\BF$-subalgebras of $E_L$ 
(resp.\ $E_R$)
by the embedding
\begin{gather*}
O(G_q)\to E_L
\quad(\varphi\mapsto \varphi\otimes1),
\quad
U_q(\Gg)\to E_L
\quad
(u\mapsto 1\otimes u)
\\
(\text{resp.}
\;\;
O(G_q)\to E_R
\quad
(\varphi\mapsto \varphi\otimes1),
\quad
U_q(\Gg)^\op\to E_R\quad
(u^\circ\mapsto 1\otimes u^\circ)
).
\end{gather*}
In $E_L$ we have
\begin{align}
\label{eq:relEL}
u\varphi=
\sum_{(u)}(u_{(0)}\cdot\varphi)u_{(1)},
\qquad
\varphi u=
\sum_{(u)}u_{(1)}
((S^{-1}(u_{(0)})\cdot\varphi),
\end{align}
and in $E_R$ we have
\begin{align}
\label{eq:relER}
u^\circ\varphi=
\sum_{(u)}
(\varphi\cdot u_{(0)})u_{(1)}^\circ,
\qquad
\varphi^\circ u=
\sum_{(u)}u_{(1)}^\circ
(\varphi\cdot(S(u_{(0)}))
\end{align}
for $u\in U_q(\Gg)$, $\varphi\in O(G_q)$.

Note that $O(G_q)$ is a left $E_L$-module by
\begin{equation}
\label{eq:ELO}
E_L\times O(G_q)
\ni(\varphi u,\psi)\mapsto \varphi(u\cdot\psi)\in O(G_q)
\qquad
(\varphi, \psi\in O(G_q), u\in U_q(\Gg)),
\end{equation}
and a  left $E_R$-module by
\begin{equation}
\label{eq:ERO}
E_R\times O(G_q)
\ni(\varphi u^\circ,\psi)\mapsto \varphi(\psi\cdot u)\in O(G_q)
\qquad
(\varphi, \psi\in O(G_q), u\in U_q(\Gg)).
\end{equation}

We denote by $E_{L,f}$ 
the subalgebra of 
$E_L$ 
generated by 
$\Uf(\Gg)$ 
and $O(G_q)$.
Similarly
we denote by $E_{R,f}$ 
the subalgebra of 
$E_R$
generated by 
$\Uf(\Gg)^\op$ 
and $O(G_q)$.
By \eqref{eq:Uf}
we have
\begin{align*}
&E_{L,f}\cong O(G_q)\otimes \Uf(\Gg),
\qquad
E_{R,f}\cong O(G_q)\otimes \Uf(\Gg)^\op.
\end{align*}

We define linear maps
\begin{equation}
\Psi_{RL}:E_{L,f}\to E_{R,f},
\qquad
\Psi_{LR}:E_{R,f}\to E_{L,f}
\end{equation}
as follows.
Let $\varphi\in O(G_q)$ and 
$u\in \Uf(\Gg)$.
Writing
\[
\ad(w)(u)=\sum_r\langle\varphi_r,w\rangle u_r
\qquad(w\in U_q(\Gg))
\]
for $u_r\in \Uf(\Gg)$, $\varphi_r\in O(G_q)$, we set
\[
\Psi_{RL}(\varphi u)
=\sum_r\varphi\varphi_ru_r^\circ,
\qquad
\Psi_{LR}(\varphi u^\circ)
=\sum_r\varphi(S^{-1}\varphi_r)u_r.
\]
We can easily check the following
\begin{lemma}
\label{lem:fELR}
\begin{itemize}
\item[(i)]
The linear maps $\Psi_{RL}$ and $\Psi_{LR}$ are algebra homomorphisms which are inverse to each other.
In particular, we have an isomorphism
\begin{equation}
\label{eq:fELR}
E_{L,f}\cong E_{R,f}
\end{equation}
of algebras.
\item[(ii)]
$
[\Psi_{RL}(\Uf(\Gg)),\Uf(\Gg)^\op]=
[\Psi_{LR}(\Uf(\Gg)^\op),\Uf(\Gg)]=0$.
\end{itemize}
\end{lemma}

\begin{remark}
Denote by $D_G$ the ring of differential operators on $G$.
The enveloping algebra $U(\Gg)$ 
is naturally identified with the ring of left 
invariant differential operators
by 
$U(\Gg)\ni u\mapsto L_u\in D_G$, 
where 
\[
L_u(\varphi)=u\cdot\varphi
\qquad
(\varphi\in O(G), u\in U(\Gg))
\]
(see \eqref{eq:UgOG} for the notation).
Similarly, 
$U(\Gg)^\op$ 
is naturally identified with the ring of right
invariant differential operators
by 
$U(\Gg)^\op\ni u^\circ\mapsto R_u\in D_G$, 
where 
\[
R_u(\varphi)=\varphi\cdot u
\qquad
(\varphi\in O(G), u\in U(\Gg)).
\]
Moreover, 
identifying $U(\Gg)$ and $U(\Gg)^\op$ with subalgebras of $D_G$ we have isomorphisms
\begin{equation}
\label{eq:LRcong}
O(G)\otimes U(\Gg)\cong
D_G\cong
O(G)\otimes U(\Gg)^\op
\end{equation}
induced by the multiplication of $D_G$.
Note that 
$E_L=O(G_q)\otimes U_q(\Gg)$ 
and 
$E_{L,f}=O(G_q)\otimes \Uf(\Gg)$
(resp.\
$E_R=O(G_q)\otimes U_q(\Gg)^\op$ 
and
$E_{R,f}=O(G_q)\otimes \Uf(\Gg)^\op$)
are  quantum analogues of 
$D_G\cong O(G)\otimes U(\Gg)$
(resp.\
$D_G\cong O(G)\otimes U(\Gg)^\op$).
Moreover, \eqref{eq:fELR} gives  a quantum analogue of \eqref{eq:LRcong}.
\end{remark}
\subsection{}
We define  a left $U_q(\Gg)$-module structure of $E_R$:
\begin{equation}
\label{eq:lac1}
U_q(\Gg)\times E_R\ni (u,d)\mapsto u\lac d\in E_R
\end{equation}
by
\begin{equation}
\label{eq:lac2}
u\lac(\varphi v^\circ)=(u\cdot\varphi)v^\circ
\qquad
(u, v\in U_q(\Gg), \varphi\in O(G_q)).
\end{equation}
We have
\begin{equation}
\label{eq:lac4}
u\lac\Phi_{RL}(v)
=\Phi_{RL}(\ad(u)(v))
\qquad
(u\in U_q(\Gg), v\in E_{L,f}),
\end{equation}
\begin{equation}
\label{eq:lac3}
u\lac(d_1d_2)
=\sum_{(u)}(u_{(0)}\lac d_1)(u_{(1)}\lac d_2)
\qquad
(u\in U_q(\Gg), d_1, d_2\in E_R).
\end{equation}
We also define a right $U_q(\Gg)$-module structure of $E_R$:
\begin{equation}
\label{eq:rac1}
E_R\times U_q(\Gg)\ni (d,u)\mapsto d\rac u\in E_R
\end{equation}
by
\begin{equation}
\label{eq:rac2}
d\rac u=\sum_{(u)}u_{(0)}^\circ d(S^{-1}u_{(1)})^\circ
\qquad
(u\in U_q(\Gg), d\in E_R).
\end{equation}
We have
\begin{equation}
\label{eq:rac3}
v^\circ \rac u=(\ad(S^{-1}u)(v))^\circ
\qquad(u, v\in U_q(\Gg)),
\end{equation}
\begin{equation}
\label{eq:rac4}
\varphi\rac u=
\varphi\cdot u
\qquad(\varphi\in O(G_q),\;u\in U_q(\Gg)),
\end{equation}
\begin{equation}
\label{eq:rac5}
\Psi_{RL}(v)\rac u=\varepsilon(u)\Psi_{RL}(v)
\qquad(v\in \Uf(\Gg), u\in U_q(\Gg)),
\end{equation}
\begin{equation}
\label{eq:rac6}
(d_1d_2)\rac u=
\sum_{(u)}(d_1\rac u_{(0)})(d_2\rac u_{(1)})
\qquad(d_1, d_2\in E_R,\; u\in U_q(\Gg)).
\end{equation}
It is easily seen that $E_R$ is a $U_q(\Gg)$-bimodule with respect to 
\eqref{eq:lac1} and \eqref{eq:rac1}, 
and $E_{R,f}$ is a $U_q(\Gg)$-subbimodule of $E_R$.
\begin{remark}
Define a $G$-biaction on $D_G$:
\[
G\times D_G\times G\ni(g',d,g)\mapsto g'\lac d\rac g
\in D_G
\]
by
\[
(g'\lac d\rac g)(\varphi)
=g'\cdot(d((g')^{-1}\cdot\varphi\cdot g^{-1}))\cdot{g}
\qquad(\varphi\in D_G)
\]
(see Remark \ref{rem:UgOG} for the notation).
By differentiation
we obtain the corresponding $U(\Gg)$-bimodule structure of $D_G$:
\begin{equation}
\label{eq:UgDG}
U(\Gg)\times D_G\times U(\Gg)
\ni(u',d,u)\mapsto u'\lac d\rac u\in D_G.
\end{equation}
Then \eqref{eq:lac1}, \eqref{eq:rac1} are  quantum analogues of \eqref{eq:UgDG}.
\end{remark}

\subsection{}
\label{subsec:DGB}
We define an abelian category 
$\Mod(\DD_{G_q},B_q)$ as follows.
An object is a left $E_R$-module $M$ equipped with a left $O(B_q)$-comodule structure 
$\beta_M:M\to O(B_q)\otimes M$ satisfying the following conditions.
Denote by 
\begin{equation}
\label{eq:triaction}
M\times U_q(\Gb)\to M
\qquad
((m,y)\mapsto m\Uac y)
\end{equation}
the right $U_q(\Gb)$-module structure of $M$ 
corresponding to the left $O(B_q)$-comodule structure of $M$.
Then the conditions for $M$ are:
\begin{itemize}
\item[(A)]
we have 
\[
(dm)\Uac y
=
\sum_{(y)}(d\rac y_{(0)})(m\Uac y_{(1)})
\quad
(d\in E_R, \; m\in M,\; y\in U_q(\Gb)),
\]
i.e.\ 
the linear map $E_R\otimes M\to M$
given by the left $E_R$-module structure of $M$  is a homomorphism of right $U_q(\Gb)$-modules with respect to the right action of $U_q(\Gb)$ on $E_R$ and $M$ given by \eqref{eq:rac1} and \eqref{eq:triaction} respectively,
\item[(B)]
we have
\[
y^\circ m=m\Uac y
\qquad(y\in \tU_q(\Gn), \; m\in M),
\]
i.e.\ the right $\tU_q(\Gn)$-module structure of $M$ induced by the left $E_R$-module structure coincides with the restriction of \eqref{eq:triaction} to $\tU_q(\Gn)$.
\end{itemize}
A morphism in 
$\Mod(\DD_{G_q},B_q)$  is a linear map which respects the left $E_R$-action and the left $O(B_q)$-coaction.
For $M, N\in\Mod(\DD_{G_q},B_q)$ we denote by
$\Hom_{\DD_{G_q},B_q}(M,N)$ 
the set of morphisms from $M$ to $N$.

Note that $O(G_q)$ is an object of 
$\Mod(\DD_{G_q},B_q)$ with respect to the left $E_R$-module structure \eqref{eq:ERO} and the left $O(B_q)$-comodule structure 
$(\res\otimes1)\circ\Delta:O(G_q)\to O(B_q)\otimes O(G_q)$.

For $\mu\in\Lambda$ we define a functor 
\begin{equation}
\label{eq:transPB}
(\bullet)[\mu]:
\Mod(\DD_{G_q},B_q)
\to
\Mod(\DD_{G_q},B_q)
\qquad(M\mapsto M[\mu])
\end{equation}
by
\[
M[\mu]=M\otimes \BF_{-\mu},
\]
where the $E_R$-module structure of $M\otimes \BF_{-\mu}$ is given by the $E_R$-action on the first factor, and the $O(B_q)$-comodule structure is given by the diagonal coaction.
This is well-defined by 
$\Delta(\tU_q(\Gn))\subset U_q(\Gb)\otimes \tU_q(\Gn)$.

\subsection{}
Define a left $E_R$-module $\overline{E}_R$ by
\begin{equation}
\overline{E}_R
=E_R/J,
\qquad 
J=\sum_{y\in\tU_q(\Gn)}
E_R(y^\circ-\varepsilon(y)).
\end{equation}
We have
\[
J=\sum_{y\in\tU_q(\Gn)}
O(G_q)U_q(\Gg)^\op(y^\circ-\varepsilon(y))
\cong
O(G_q)\otimes
\left(\sum_{y\in\tU_q(\Gn)}
U_q(\Gg)^\op(y^\circ-\varepsilon(y))\right),
\]
and hence
\[
\overline{E}_R
\cong
O(G_q)
\otimes
\left(
U_q(\Gg)^\op/
\sum_{y\in\tU_q(\Gn)}
U_q(\Gg)^\op(y^\circ-\varepsilon(y))\right).
\]
We have
\begin{equation}
\label{eq:yd}
\overline{y^\circ d}
=\overline{d\rac y}
\qquad(d\in E_R, y\in\tU_q(\Gn))
\end{equation}
in $\overline{E}_R$.

We see easily the following.
\begin{lemma}
\label{lem:Eb}
\begin{itemize}
\item[(i)]
The left ideal $J$ of $E_R$ is a right $U_q(\Gb)$-submodule of $E_R$ with respect to \eqref{eq:rac1}.
\item[(ii)]
The right $U_q(\Gb)$-module structure of $\overline{E}_R$ given by (i) is integrable so that $\overline{E}_R$ is endowed with a left $O(B_q)$-comodule structure.
\item[(iii)]
The left $E_R$-module $\overline{E}_R$ turns out to be  an object of 
$\Mod(\DD_{G_q},B_q)$ with respect to the left $O(B_q)$-comodule structure given in (ii).
\item[(iv)]
Assume that $J'$ is a left ideal of $E_R$ stable under the right action \eqref{eq:rac1} of $U_q(\Gb)$.
We also assume that the induced right action of $U_q(\Gb)$ on $E_R/J'$ is integrable so that $E_R/J'$ is a left $O(B_q)$-comodule. 
Then $M=E_R/J'$ satisfies the condition (A).
Moreover, if $M=E_R/J'$ satisfies the condition (B), then we have $J'\supset J$.
\end{itemize}
\end{lemma}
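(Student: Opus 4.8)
The plan is to dispatch (i) and (iv) (and the formal half of (iii)) by direct computation with the relations \eqref{eq:rac3}--\eqref{eq:rac6}, and to concentrate on (ii), which is the only non-formal point. I would first record two preliminary facts. One is that $1\rac u=1\cdot u=\varepsilon(u)1$ for $u\in U_q(\Gg)$, by \eqref{eq:rac4} and the fact that the unit $1=\varepsilon$ of $O(G_q)$ is killed by the augmentation ideal under the right action of $U_q(\Gg)$. The other is that $\tU_q(\Gn)$ is stable under $\ad(U_q(\Gb))$, and hence so is $\tU_q(\Gn)^+:=\tU_q(\Gn)\cap\Ker\varepsilon$: since $\ad$ is an algebra action and $k_\lambda,f_i$ generate $U_q(\Gb)$, it suffices to check stability under $\ad(k_\lambda)$, which acts as the scalar $\chi_{-\gamma}(k_\lambda)$ on $\tU_q(\Gn)_{-\gamma}$, and under $\ad(f_i)$, for which $\ad(f_i)(z)=(f_iz-zf_i)k_i$ together with $\tU_q(\Gn)_{-\gamma}=k_\gamma U_q(\Gn)_{-\gamma}$ and the fact that $U_q(\Gn)$ is a subalgebra gives $\ad(f_i)(\tU_q(\Gn)_{-\gamma})\subseteq\tU_q(\Gn)_{-\gamma-\alpha_i}$; the statement for $\tU_q(\Gn)^+$ then follows from $\varepsilon\circ\ad(x)=\varepsilon(x)\varepsilon$.

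For (i), given $d\in E_R$, $y\in\tU_q(\Gn)$ and $u\in U_q(\Gb)$, I would apply \eqref{eq:rac6} with $d_1=d$ and $d_2=y^\circ-\varepsilon(y)$, noting $u_{(0)},u_{(1)}\in U_q(\Gb)$. By \eqref{eq:rac3} and the first preliminary, $(y^\circ-\varepsilon(y))\rac u_{(1)}=(y')^\circ-\varepsilon(u_{(1)})\varepsilon(y)$ with $y':=\ad(S^{-1}u_{(1)})(y)$; by the second preliminary (and $S^{-1}(U_q(\Gb))=U_q(\Gb)$) we have $y'\in\tU_q(\Gn)$, and $\varepsilon(y')=\varepsilon(u_{(1)})\varepsilon(y)$, so $(y^\circ-\varepsilon(y))\rac u_{(1)}=(y')^\circ-\varepsilon(y')\in J$. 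Since $J$ is a left ideal the whole sum lies in $J$, which is (i). Part (iv) is then purely formal: condition (A) for $M=E_R/J'$ reads $(dm)\Uac y=\sum_{(y)}(d\rac y_{(0)})(m\Uac y_{(1)})$, and for a lift $m_0\in E_R$ of $m$ both sides are the image in $E_R/J'$ of the two sides of \eqref{eq:rac6} with $d_1=d$, $d_2=m_0$; for the last assertion, if $M=E_R/J'$ satisfies (B), evaluating (B) at the image $\overline 1$ of $1$ gives $\overline{y^\circ}=\overline{1\rac y}=\varepsilon(y)\overline 1$ by the first preliminary, so $y^\circ-\varepsilon(y)\in J'$, whence $J=\sum_{y\in\tU_q(\Gn)}E_R(y^\circ-\varepsilon(y))\subseteq J'$ because $J'$ is a left ideal. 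Granting (ii), (iii) follows at once: (A) is (iv) applied to $J'=J$ (legitimate by (i) and (ii)), and (B) is exactly \eqref{eq:yd}.

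The substance is (ii). A direct computation with the multiplication of $E_R$ shows $J=O(G_q)\otimes\overline{J}$ with $\overline{J}=\sum_{y\in\tU_q(\Gn)}U_q(\Gg)^\op(y^\circ-\varepsilon(y))$, so $\overline{E}_R\cong O(G_q)\otimes\overline{V}$ with $\overline{V}:=U_q(\Gg)^\op/\overline{J}$; by \eqref{eq:rac3}, \eqref{eq:rac4} and \eqref{eq:rac6} this is an isomorphism of right $U_q(\Gb)$-modules, where $O(G_q)$ carries the restriction of the right action $\cdot$, $\overline{V}$ carries the action induced by the ``twisted adjoint'' action $\overline{v^\circ}\rac u=\overline{(\ad(S^{-1}u)(v))^\circ}$ (well defined by the second preliminary), and $\overline{E}_R$ the tensor-product action. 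Now $O(G_q)=\bigoplus_{\lambda\in\Lambda^+}V(\lambda)\otimes V^*(\lambda)$ is a direct sum of finite-dimensional right $U_q(\Gg)$-modules, hence integrable, and a tensor product of integrable right $U_q(\Gb)$-modules is integrable, so it remains to prove that $\overline{V}$ is integrable as a right $U_q(\Gb)$-module. Using $\tU_q(\Gn)_{-\gamma}=k_\gamma U_q(\Gn)_{-\gamma}$ and the triangular decomposition, $U_q(\Gg)=\bigoplus_\beta\tU_q(\Gn)_{-\beta}U_q(\Gb^+)$ and $\tU_q(\Gn)^+U_q(\Gg)=\bigoplus_{\beta\neq0}\tU_q(\Gn)_{-\beta}U_q(\Gb^+)$, from which $\overline{V}$ is a weight module with weights in $-Q^+$ and, in particular, $\overline{v^\circ}=0$ whenever $v$ is homogeneous of weight $\mu\notin Q^+$. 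For homogeneous $v$ of weight $\mu\in Q^+$, the space $\overline{v^\circ}\rac U_q(\Gb)$ is spanned by the elements $\overline{(\ad(f_{i_1})\cdots\ad(f_{i_k})(v))^\circ}$, whose argument has weight $\mu-\alpha_{i_1}-\cdots-\alpha_{i_k}$; this lies outside $Q^+$ --- so the element vanishes in $\overline{V}$ --- unless $\alpha_{i_1}+\cdots+\alpha_{i_k}\leqq\mu$, and only finitely many words satisfy the latter, so $\overline{v^\circ}\rac U_q(\Gb)$ is finite-dimensional. Hence $\overline{V}$ is integrable, and \eqref{mod:comodmod} equips $\overline{E}_R$ with the asserted left $O(B_q)$-comodule structure.

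I expect the local-finiteness estimate in (ii) to be the only real obstacle: $U_q(\Gg)$ is very far from $\ad$-locally finite, and the point is precisely that passing to $\overline{V}$ --- equivalently, applying the projection $U_q(\Gg)\to U_q(\Gg)/\tU_q(\Gn)^+U_q(\Gg)$ --- annihilates iterated applications of $\ad(f_i)$ once the weight drops below $Q^+$, which forces the cyclic right $U_q(\Gb)$-submodules of $\overline{V}$ to be finite-dimensional. Everything else is bookkeeping with \eqref{eq:rac3}--\eqref{eq:rac6}, \eqref{eq:yd} and the Peter--Weyl decomposition \eqref{eq:PW}.
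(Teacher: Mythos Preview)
Your proof is correct. The paper itself gives no proof of this lemma, merely prefacing it with ``We see easily the following,'' so there is nothing to compare your argument against in detail; your verification---particularly the integrability argument in (ii), which you rightly identify as the only non-formal point---supplies exactly what the paper leaves to the reader, and the reductions in (i), (iii), (iv) via \eqref{eq:rac3}--\eqref{eq:rac6} and \eqref{eq:yd} are the natural ones.
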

\begin{proposition}
\label{prop:sGammaD}
For $\lambda\in\Lambda$ and $M\in\Mod(\DD_{G_q},B_q)$ we have an isomorphism
\[
M^{N_q}(\lambda)\cong
\Hom_{\DD_{G_q},B_q}(\overline{E}_R,M[\lambda])
\]
of $\BF$-modules,
where 
$(\bullet)^{N_q}$ in the left side is taken regarding $M$ as an object of 
$\Mod(\CO_{G_q},B_q)$.
\end{proposition}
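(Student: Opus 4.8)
The plan is to set up an $\BF$-linear bijection $f\mapsto f(\bar 1)$, exploiting that $\overline{E}_R$ is a cyclic $E_R$-module whose generator $\bar 1$ is $B_q$-coinvariant, and then to identify the resulting space of $B_q$-coinvariants of $M[\lambda]$ with $M^{N_q}(\lambda)$. First I would record two observations about $\overline{E}_R$. Since $\overline{E}_R=E_R\bar 1$ and its defining left ideal is $J=\sum_{y\in\tU_q(\Gn)}E_R(y^\circ-\varepsilon(y))$, for any left $E_R$-module $P$ an $E_R$-linear map $\overline{E}_R\to P$ is the same datum as an element $m_0\in P$ with $(y^\circ-\varepsilon(y))m_0=0$ for all $y\in\tU_q(\Gn)$, via $d\bar 1\mapsto dm_0$. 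Secondly, a short computation in $E_R$ from \eqref{eq:rac2} (using $\sum_{(u)}(S^{-1}u_{(1)})u_{(0)}=\varepsilon(u)1$) gives $1\rac u=\varepsilon(u)1$ for every $u\in U_q(\Gg)$; by Lemma \ref{lem:Eb} the right $U_q(\Gb)$-action $\Uac$ on $\overline{E}_R$ corresponding to its $O(B_q)$-coaction then satisfies $\bar 1\Uac y=\varepsilon(y)\bar 1$, and hence, by condition (A) for $\overline{E}_R$, $(d\bar 1)\Uac y=(d\rac y)\bar 1$ for all $d\in E_R$, $y\in U_q(\Gb)$.

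Next I would match $\Hom_{\DD_{G_q},B_q}(\overline{E}_R,M[\lambda])$ with $(M[\lambda])^{B_q}$. By the first observation, with $P=M[\lambda]$, an $E_R$-linear $f$ is the same as $m_0=f(\bar 1)$ with $(y^\circ-\varepsilon(y))m_0=0$ $(y\in\tU_q(\Gn))$. Such an $f$ is in addition a morphism in $\Mod(\DD_{G_q},B_q)$, i.e.\ respects the $O(B_q)$-coaction (equivalently is $\Uac$-equivariant), exactly when $m_0\Uac y=\varepsilon(y)m_0$ for all $y\in U_q(\Gb)$: ``only if'' follows from $f(\bar 1\Uac y)=f(\bar 1)\Uac y$ together with $\bar 1\Uac y=\varepsilon(y)\bar 1$; for ``if'', writing a general element of $\overline{E}_R$ as $d\bar 1$, one has $f\big((d\bar 1)\Uac y\big)=f\big((d\rac y)\bar 1\big)=(d\rac y)m_0$ by the second observation and $E_R$-linearity, while $f(d\bar 1)\Uac y=(dm_0)\Uac y=\sum_{(y)}(d\rac y_{(0)})(m_0\Uac y_{(1)})$ by condition (A) for $M[\lambda]$, and these agree once $m_0$ is $\Uac$-invariant. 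Furthermore the constraint $(y^\circ-\varepsilon(y))m_0=0$ is then automatic: $M[\lambda]\in\Mod(\DD_{G_q},B_q)$, so condition (B) gives $y^\circ m_0=m_0\Uac y=\varepsilon(y)m_0$ for $y\in\tU_q(\Gn)$. Hence $f\mapsto f(\bar 1)$ is an $\BF$-linear bijection $\Hom_{\DD_{G_q},B_q}(\overline{E}_R,M[\lambda])\simto(M[\lambda])^{B_q}$.

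Finally I would compute $(M[\lambda])^{B_q}$: writing $M[\lambda]=M\otimes\BF_{-\lambda}$ with the diagonal $O(B_q)$-coaction and using $\tchi_\mu\tchi_\nu=\tchi_{\mu+\nu}$, one checks directly that $m\otimes 1_{-\lambda}\in(M[\lambda])^{N_q}(\nu)$ iff $m\in M^{N_q}(\nu+\lambda)$, so that $(M[\lambda])^{B_q}=(M[\lambda])^{N_q}(0)\cong M^{N_q}(\lambda)$; combined with the previous step this gives the proposition. I expect the only non-formal point to be the ``if'' half of the criterion in the second paragraph — propagating $\Uac$-equivariance of $f$ from the cyclic generator $\bar 1$ to all of $\overline{E}_R$ — which is where condition (A) and the identity $(d\bar 1)\Uac y=(d\rac y)\bar 1$ are essential; the remaining steps are just unwinding the definitions of $(\bullet)^{N_q}$, $(\bullet)[\lambda]$ and the comodule/module dictionary \eqref{mod:comodmod}.
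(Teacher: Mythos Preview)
Your proof is correct and follows essentially the same route as the paper: evaluate at the cyclic generator $\bar 1$, characterize the image as the $B_q$-coinvariants of $M[\lambda]$, and then identify $(M[\lambda])^{B_q}$ with $M^{N_q}(\lambda)$. The only organizational difference is that the paper first reduces to $\lambda=0$ and then invokes Lemma~\ref{lem:Eb}\,(iv) to replace $\overline{E}_R$ by $E_R$, whereas you work directly with the quotient and spell out the propagation of $\Uac$-equivariance via condition~(A) and the identity $(d\bar 1)\Uac y=(d\rac y)\bar 1$; these are the same computation in slightly different packaging.
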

\begin{proof}
The proof is reduced to the special case $\lambda=0$ by
$
M^{N_q}(\lambda)
\cong
(M[\lambda])^{N_q}(0)
$.
We have
\begin{align*}
\Hom_{\DD_{G_q},B_q}(\overline{E}_R,M)
=&\Hom_{E_R}(\overline{E}_R,M)
\cap
\Hom_{U_q(\Gb)^\op}(\overline{E}_R,M)
\\
\cong&
\Hom_{E_R}(E_R,M)
\cap
\Hom_{U_q(\Gb)^\op}(E_R,M).
\end{align*}
by Lemma \ref{lem:Eb} (iv).
By $\Hom_{E_R}(E_R,M)\cong M$ we see that 
$\Hom_{\DD_{G_q},B_q}(\overline{E}_R,M)$ is naturally isomorphic to the subspace of $M$ consisting of $m\in M$ satisfying the condition: 
$[d\mapsto dm]\in\Hom_{U_q(\Gb)^\op}({E}_R,M)$.
It is easily seen that this condition holds if and only if  $m\Uac y=\varepsilon(y)m$ for any $y\in U_q(\Gb)$.
We are done.
\end{proof}
Hence we have
\begin{equation}
\label{eq:MN}
M^{N_q}\cong
\bigoplus_{\lambda\in\Lambda}
\Hom_{\DD_{G_q},B_q}(\overline{E}_R,M[\lambda])
\end{equation}
for $M\in\Mod(\DD_{G_q},B_q)$.
Especially, we obtain
\begin{equation}
\label{eq:E-mult}
\overline{E}_R^{N_q}
\cong
\bigoplus_{\lambda\in\Lambda}
\Hom_{\DD_{G_q},B_q}(\overline{E}_R, \overline{E}_R[\lambda]).
\end{equation}
The right side of \eqref{eq:E-mult} is endowed with a 
$\Lambda$-graded algebra structure by 
the multiplication
\begin{align*}
&\Hom_{\DD_{G_q},B_q}(\overline{E}_R, 
\overline{E}_R[\lambda])
\otimes
\Hom_{\DD_{G_q},B_q}(\overline{E}_R, \overline{E}_R[\mu])
\\
\cong&
\Hom_{\DD_{G_q},B_q}(\overline{E}_R[\mu], 
\overline{E}_R[\lambda+\mu])
\otimes
\Hom_{\DD_{G_q},B_q}(\overline{E}_R, \overline{E}_R[\mu])
\\
\to&
\Hom_{\DD_{G_q},B_q}(\overline{E}_R, \overline{E}_R[\lambda+\mu])
\end{align*}
induced by the composition of morphisms.
We regard $\overline{E}_R^{N_q}$ as  a $\Lambda$-graded algebra by
\begin{equation}
\label{eq:E-mult2}
\overline{E}_R^{N_q}
=
\left(
\bigoplus_{\lambda\in\Lambda}
\Hom_{\DD_{G_q},B_q}(\overline{E}_R, \overline{E}_R[\lambda])
\right)^\op.
\end{equation}
Then the right side of \eqref{eq:MN} 
is naturally a right module over the right side of 
\eqref{eq:E-mult} by the composition of morphisms.
We have obtained a left exact functor
\begin{equation}
\label{eq:sGamma*D}
(\bullet)^{N_q}:\Mod(\DD_{G_q},B_q)
\to 
\Mod_\Lambda(
\overline{E}_R^{N_q}).
\end{equation}
By applying \eqref{eq:sGamma*D} to
$O(G_q)\in\Mod(\DD_{G_q},B_q)$ we obtain 
\begin{equation}
\label{eq:AinEN}
{A_q}=O(G_q)^{N_q}\in\Mod_\Lambda(
\overline{E}_R^{N_q}).
\end{equation}
We see easily the following.
\begin{lemma}
\label{lem:sGamma*D}
\begin{itemize}
\item[(i)]
For $d_1, d_2\in E_R$ satisfying 
$\overline{d}_1, \overline{d}_2\in
\overline{E}_R^{N_q}$  we have 
$\overline{d_1d_2}\in\overline{E}_R^{N_q}$, and
$
\overline{d}_1\overline{d}_2=\overline{d_1d_2}$ in
the algebra $\overline{E}_R^{N_q}$.

\item[(ii)]
Let $M\in\Mod(\DD_{G_q},B_q)$ 
and $m\in M^{N_q}$.
For $d\in E_R$ satisfying 
$\overline{d}\in\overline{E}_R^{N_q}$ we have $dm\in M_R^{N_q}$, and
$
\overline{d} m
=dm$ in
the left $\overline{E}_R^{N_q}$-module $M^{N_q}$.
\end{itemize}
\end{lemma}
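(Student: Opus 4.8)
The plan is to read off both structures in question directly from the Hom-descriptions \eqref{eq:E-mult}, \eqref{eq:E-mult2} and \eqref{eq:MN}, combined with the identification of Proposition \ref{prop:sGammaD}. The first step is to make that identification explicit: for $N\in\Mod(\DD_{G_q},B_q)$ and $\nu\in\Lambda$, the isomorphism $N^{N_q}(\nu)\cong\Hom_{\DD_{G_q},B_q}(\overline{E}_R,N[\nu])$ of Proposition \ref{prop:sGammaD} sends $n$ to the morphism $\psi_n$ with $\psi_n(\overline{e})=en$ for $e\in E_R$ (here $N[\nu]$ is identified with $N$ as a left $E_R$-module, the shift affecting only the $O(B_q)$-coaction), and conversely a morphism $\psi$ is recovered from $\psi(\overline{1})\in N^{N_q}(\nu)$. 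This is immediate from the proof of Proposition \ref{prop:sGammaD}, where $\Hom_{\DD_{G_q},B_q}(\overline{E}_R,N)$ is realized as a subspace of $\Hom_{E_R}(\overline{E}_R,N)\cong\Hom_{E_R}(E_R,N)\cong N$ via $f\mapsto f(\overline{1})$. I would also record the trivial remark that $\psi_n[\mu]\colon\overline{E}_R[\mu]\to N[\nu+\mu]$ agrees with $\psi_n$ on underlying $\BF$-modules, since $(\bullet)[\mu]$ is tensoring by $\BF_{-\mu}$.

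Granting this, part (i) follows by evaluating a composite of morphisms at $\overline{1}$. Let $\overline{d}_1\in\overline{E}_R^{N_q}(\lambda)$, $\overline{d}_2\in\overline{E}_R^{N_q}(\mu)$ correspond to $\psi_1\colon\overline{E}_R\to\overline{E}_R[\lambda]$ and $\psi_2\colon\overline{E}_R\to\overline{E}_R[\mu]$. Unwinding the multiplication of \eqref{eq:E-mult} together with the passage to the opposite algebra in \eqref{eq:E-mult2}, the product $\overline{d}_1\overline{d}_2$ in $\overline{E}_R^{N_q}$ corresponds to the composite $\psi_2[\lambda]\circ\psi_1\colon\overline{E}_R\to\overline{E}_R[\lambda+\mu]$, which is again a morphism in $\Mod(\DD_{G_q},B_q)$; hence by Proposition \ref{prop:sGammaD} its value at $\overline{1}$ lies in $\overline{E}_R^{N_q}(\lambda+\mu)$. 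That value is $\psi_2[\lambda](\psi_1(\overline{1}))=\psi_2(\overline{d}_1)=\overline{d_1d_2}$, using the explicit form of $\psi_1,\psi_2$ from the first step. Since the identification sends an element of $\overline{E}_R^{N_q}(\lambda+\mu)$ to the morphism whose value at $\overline{1}$ is that element, this yields simultaneously $\overline{d_1d_2}\in\overline{E}_R^{N_q}$ and $\overline{d}_1\overline{d}_2=\overline{d_1d_2}$. Part (ii) is the same argument with the module structure \eqref{eq:MN}: if $m\in M^{N_q}(\mu)$ corresponds to $\psi_m\colon\overline{E}_R\to M[\mu]$ and $\overline{d}\in\overline{E}_R^{N_q}(\lambda)$ corresponds to $\phi_d\colon\overline{E}_R\to\overline{E}_R[\lambda]$, then $\overline{d}\cdot m$ corresponds to $\psi_m[\lambda]\circ\phi_d$, a morphism in $\Mod(\DD_{G_q},B_q)$ whose value at $\overline{1}$ is $\psi_m(\phi_d(\overline{1}))=\psi_m(\overline{d})=dm$; hence $dm\in M^{N_q}$ and $\overline{d}\cdot m=dm$.

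These computations are formal, so I do not expect a real obstacle; the delicate point is purely the matching of conventions, and that is where I would concentrate. Concretely I would be careful (a) to reverse the order correctly in passing from the multiplication of \eqref{eq:E-mult} to its opposite in \eqref{eq:E-mult2} --- a brief check shows that without the $\op$ one would get $\overline{d_2d_1}$ rather than $\overline{d_1d_2}$, so the $\op$ is exactly what makes the statement come out right; (b) to apply the shift functor $(\bullet)[\lambda]$ to the correct (first) factor of each composite; and (c) to confirm that the identification of Proposition \ref{prop:sGammaD} is natural enough to turn composition of morphisms into the algebra, resp.\ module, operations, which holds because everything reduces to the single natural isomorphism $\Hom_{E_R}(E_R,M)\cong M$, $f\mapsto f(1)$. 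If one wished to avoid this bookkeeping, the membership statements admit a direct check: using conditions (A), (B) in \ref{subsec:DGB} and the equality $\tchi_\lambda|_{\tU_q(\Gn)}=\varepsilon|_{\tU_q(\Gn)}$ one gets $(d\rac y)m=\tchi_\lambda(y)\,dm$ for $\overline{d}\in\overline{E}_R^{N_q}(\lambda)$, $m\in M^{N_q}$ and $y\in U_q(\Gb)$, whence $(dm)\Uac y=\tchi_{\lambda+\mu}(y)\,dm$ by (A) and so $dm\in M^{N_q}$; but the equalities $\overline{d}_1\overline{d}_2=\overline{d_1d_2}$ and $\overline{d}\cdot m=dm$ still have to be extracted from the Hom-picture, so in the write-up I would simply carry out the chase above.
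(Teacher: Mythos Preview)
Your proof is correct and is precisely the unwinding of the definitions that the paper leaves implicit: the paper states this lemma with ``We see easily the following'' and gives no proof, and your argument via the Hom-descriptions \eqref{eq:MN}, \eqref{eq:E-mult}, \eqref{eq:E-mult2} together with the explicit identification $f\mapsto f(\overline{1})$ from the proof of Proposition~\ref{prop:sGammaD} is exactly the intended verification. Your care with the $\op$ and the shift conventions is well placed and the computations check out.
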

We sometimes regard ${A_q}$ as a subalgebra of $\EN$ via the  injective algebra homomorphism
\begin{equation}
\label{eq:AEN}
{A_q}\hookrightarrow\EN\qquad
(\varphi\mapsto\overline{\varphi}).
\end{equation}

By \eqref{eq:yd} we have also the following.
\begin{lemma}
\label{lem:EEN}
We have a well-defined linear map
$\overline{E}_R\otimes\EN\to \overline{E}_R$
sending $\overline{d}_1\otimes\overline{d}_2$ to
$\overline{d_1d_2}$.
\end{lemma}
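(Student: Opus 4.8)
The claim is Lemma \ref{lem:EEN}: the assignment $\overline{d}_1\otimes\overline{d}_2\mapsto\overline{d_1d_2}$ gives a well-defined linear map $\overline{E}_R\otimes\EN\to\overline{E}_R$. The plan is to show this is well-defined separately in each tensor factor. Well-definedness in the first factor amounts to checking that $J\cdot\EN\subset J$ inside $E_R$ (more precisely, that if $\overline{d}_1=0$ then $\overline{d_1d_2}=0$ whenever $\overline{d}_2\in\EN$), and well-definedness in the second factor amounts to checking that $d_1\cdot(\text{lifts of }0\text{ in }\EN)\subset J$. The first of these is the routine one: $J=\sum_{y\in\tU_q(\Gn)}E_R(y^\circ-\varepsilon(y))$ is a left ideal, so $d_1'd_2\in J$ for any $d_1'\in J$, irrespective of $d_2$; hence the map descends in the first variable with no constraint on the second.

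The substantive point is well-definedness in the second variable, and this is exactly where the hypothesis $\overline{d}_2\in\EN=\overline{E}_R^{N_q}$ (rather than merely $\overline{d}_2\in\overline{E}_R$) is used. First I would reduce to showing: if $d_2,d_2'\in E_R$ are two lifts of the same element of $\EN$ — equivalently $d_2-d_2'\in J$ — and both project into $\overline{E}_R^{N_q}$, then $\overline{d_1d_2}=\overline{d_1d_2'}$ in $\overline{E}_R$ for every $d_1\in E_R$, i.e. $d_1(d_2-d_2')\in J$. Writing $e=d_2-d_2'\in J$, the issue is that $d_1\cdot J\not\subset J$ in general (since $J$ is only a left ideal), so we must exploit the extra structure. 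The key identity is \eqref{eq:yd}: $\overline{y^\circ d}=\overline{d\rac y}$ for $d\in E_R$, $y\in\tU_q(\Gn)$. Because $\overline{d}_2$ and $\overline{d}_2'$ both lie in $\overline{E}_R^{N_q}$, by the description of $(\bullet)^{N_q}$ via Proposition \ref{prop:sGammaD} (and the definition of $\Mod(\DD_{G_q},B_q)$, condition (B)) they satisfy $\overline{y^\circ d_2}=\varepsilon(y)\overline{d}_2$ and likewise for $d_2'$ for $y\in\tU_q(\Gn)$; so $\overline{y^\circ e}=\varepsilon(y)\overline{e}=0$, i.e. $y^\circ e\in J$ for all $y\in\tU_q(\Gn)$. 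Thus $e\in J$ together with $\tU_q(\Gn)^\circ\,e\subset J$.

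Now I would compute $\overline{d_1 e}$ using the commutation relations in $E_R$ to move the $U_q(\Gg)^\op$-part of $d_1$ to the left past $e$: write $d_1=\sum\varphi_a u_a^\circ$ with $\varphi_a\in O(G_q)$, $u_a\in U_q(\Gg)$, and use \eqref{eq:relER} to rewrite $u_a^\circ e$. Since $e\in J$ can be written $e=\sum_j d_j'(y_j^\circ-\varepsilon(y_j))$ with $y_j\in\tU_q(\Gn)$, and since by Lemma \ref{lem:Eb}(i) $J$ is a right $U_q(\Gb)$-submodule under $\rac$, the relation \eqref{eq:yd}/\eqref{eq:yd}-type manipulations let one transfer the left multiplication by $u_a^\circ$ into a combination of a $\rac$-action (which preserves $J$) and terms of the form $y^\circ e'$ with $y\in\tU_q(\Gn)$ and $e'\in J$ — and we showed such terms lie in $J$. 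Assembling, $d_1 e\in J$, so $\overline{d_1 d_2}=\overline{d_1 d_2'}$, and bilinearity over $\BF$ is immediate. I expect the main obstacle to be bookkeeping in this last commutation argument: one must be careful that the coproduct of an element of $\tU_q(\Gn)$ stays in $U_q(\Gb)\otimes\tU_q(\Gn)$ (used already to define \eqref{eq:transPB}) so that the intermediate terms produced by \eqref{eq:relER} are genuinely of the two admissible types, and that the Ore/PBW factorization $E_R\cong O(G_q)\otimes U_q(\Gg)^\op$ is used consistently when identifying which pieces land in $J$. Once that is set up, the verification is mechanical.
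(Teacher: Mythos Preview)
You have swapped the two cases. Recall $J=\sum_{y\in\tU_q(\Gn)}E_R(y^\circ-\varepsilon(y))$ is a \emph{left} ideal of $E_R$: it absorbs multiplication from the left, so $d_1\cdot J\subset J$ for every $d_1\in E_R$. Hence well-definedness in the \emph{second} factor is automatic: if $e=d_2-d_2'\in J$ then $d_1e\in J$ with no hypothesis on $\overline{d}_2$ at all. Your elaborate commutation argument in the final paragraph is therefore unnecessary (and the claim ``$d_1\cdot J\not\subset J$ in general'' is false).

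The substantive case is the \emph{first} factor: if $d_1\in J$, one must show $d_1d_2\in J$, and here $J\cdot E_R\not\subset J$ in general, so your sentence ``$J$ is a left ideal, so $d_1'd_2\in J$ for any $d_1'\in J$, irrespective of $d_2$'' is wrong. This is precisely where the hypothesis $\overline{d}_2\in\EN$ is used, and where \eqref{eq:yd} enters (as the paper indicates). Write $d_1=\sum_j a_j(y_j^\circ-\varepsilon(y_j))$ with $a_j\in E_R$, $y_j\in\tU_q(\Gn)$. Since $\overline{d}_2\in\EN$ we have $\overline{d_2\rac y}=\varepsilon(y)\overline{d}_2$ for $y\in\tU_q(\Gn)$ (noting $\tchi_\lambda|_{\tU_q(\Gn)}=\varepsilon$), and combining with \eqref{eq:yd} gives $\overline{y^\circ d_2}=\varepsilon(y)\overline{d}_2$, i.e.\ $(y^\circ-\varepsilon(y))d_2\in J$. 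Since $J$ is a left ideal, $a_j(y_j^\circ-\varepsilon(y_j))d_2\in J$ for each $j$, hence $d_1d_2\in J$. That is the whole proof; your ingredients were correct but applied to the wrong variable.
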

Hence we have a natural $(E_R,\EN)$-bimodule structure of $\overline{E}_R$ given by
\[
d_1\overline{d}\,\overline{d}_2=
\overline{d_1dd_2}
\qquad(\overline{d}\in\overline{E}_R, d_1\in E_R, 
\overline{d}_2\in\EN).
\]
For $K\in\Mod_\Lambda(\EN)$
the left $E_R$-module
$
\overline{E}_R\otimes_{\EN}K
$
is endowed with an integrable  right $U_q(\Gb)$-module structure by
\[
(\overline{d}\otimes k)\Uac y
=
\sum_{(y)}
\tchi_\lambda(y_{(1)})\overline{d\rac y_{(0)}}
\otimes k
\qquad(d\in E_R, k\in K(\lambda), y\in U_q(\Gb))
\]
so that we have $\overline{E}_R\otimes_{\EN}K
\in\Mod(\DD_{G_q},B_q)$.
We obtain a functor 
\begin{equation}
\label{eq:spiE}
\overline{E}_R\otimes_{\EN}(\bullet):
\Mod_\Lambda(\EN)\to\Mod(\DD_{G_q},B_q).
\end{equation}

By Corollary \ref{cor:equivalence} (v)
we have 
a canonical isomorphism 
\begin{equation}
\label{eq:OENE}
O(G_q)\otimes_{A_q}\EN\cong\overline{E}_R
\end{equation} 
in $\Mod(\CO_{G_q},B_q)$, 
and hence we have
\begin{equation}
\label{eq:DO}
\overline{E}_R\otimes_{\EN}K
\cong
O(G_q)\otimes_{A_q}K
\qquad(K\in \Mod_\Lambda(\EN)).
\end{equation}
We see easily that \eqref{eq:DO} is an isomorphism in $\Mod(\CO_{G_q},B_q)$.
Namely, we have the following commutative diagram of functors:
\begin{equation}
\vcenter{
\xymatrix{
\Mod_\Lambda(\EN)
\ar[r]
\ar[d]_{\overline{E}_R\otimes_\EN(\bullet)}
&
\Mod_\Lambda({A_q})
\ar[d]^{O(G_q)\otimes_{A_q}(\bullet)}
\\
\Mod(\DD_{G_q},B_q)
\ar[r]
&
\Mod(\CO_{G_q},B_q),
}}
\end{equation}
where the horizontal arrows are the forgetful functors.

\begin{proposition}
\label{prop:equiv1}
The functor 
\[
\overline{E}_R\otimes_{\EN}(\bullet):
\Mod_\Lambda(\EN)\to\Mod(\DD_{G_q},B_q)
\]
in \eqref{eq:spiE} induces an equivalence
\begin{equation}
\label{eq:equiv1}
\Mod_\Lambda(\overline{E}^{N_q}_R)
/(
\Mod_\Lambda(\overline{E}^{N_q}_R)
\cap
\Tor_{\Lambda^+}({A_q}))
\simto
\Mod(\DD_{G_q},B_q),
\end{equation}
whose inverse is  the composite of 
\[
\Mod(\DD_{G_q},B_q)
\xrightarrow{(\bullet)^{N_q}}
\Mod_\Lambda(\EN)
\xrightarrow{\varpi}
\Mod_\Lambda(\overline{E}^{N_q}_R)
/(
\Mod_\Lambda(\overline{E}^{N_q}_R)
\cap
\Tor_{\Lambda^+}({A_q})).
\]
\end{proposition}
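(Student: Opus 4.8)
The plan is to establish that $\overline{E}_R\otimes_{\EN}(\bullet)$ and $\varpi\circ((\bullet)^{N_q})$ are mutually inverse, working in parallel with the proof of Corollary \ref{cor:equivalence} and Proposition \ref{prop:AZBKeq}. First I would verify that $\overline{E}_R\otimes_{\EN}(\bullet)$ kills $\Mod_\Lambda(\EN)\cap\Tor_{\Lambda^+}({A_q})$: by \eqref{eq:DO} we have $\overline{E}_R\otimes_{\EN}K\cong O(G_q)\otimes_{A_q}K$ as $\CO$-modules, and Corollary \ref{cor:equivalence} (iv) gives $O(G_q)\otimes_{A_q}M=0$ for $M\in\Tor_{\Lambda^+}({A_q})$; hence by the universal property of localization the functor \eqref{eq:spiE} descends to a functor from the quotient category on the left of \eqref{eq:equiv1} to $\Mod(\DD_{G_q},B_q)$. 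In the same way, $(\bullet)^{N_q}:\Mod(\DD_{G_q},B_q)\to\Mod_\Lambda(\EN)$ is left exact by Proposition \ref{prop:sGammaD}, and composing with $\varpi$ gives the claimed inverse functor; I would note that $(\bullet)^{N_q}$ on the $\DD$-side is, under the forgetful functor to $\Mod(\CO_{G_q},B_q)$, compatible with $(\bullet)^{N_q}$ on the $\CO$-side, since the formulas \eqref{eq:sGamma*} and \eqref{eq:sGamma*D} are defined by the same coinvariance condition.

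Next I would check the two composites. For $M\in\Mod(\DD_{G_q},B_q)$, forgetting to $\Mod(\CO_{G_q},B_q)$ and using \eqref{eq:DO} together with Corollary \ref{cor:equivalence} (v) gives $\overline{E}_R\otimes_{\EN}M^{N_q}\cong O(G_q)\otimes_{A_q}M^{N_q}\cong M$ as objects of $\Mod(\CO_{G_q},B_q)$; the essential point is then that this isomorphism is one of $E_R$-modules and of $O(B_q)$-comodules, i.e.\ lifts to $\Mod(\DD_{G_q},B_q)$. This is where the right $U_q(\Gg)^{\op}$-action must be tracked: the left $E_R$-module structure on $\overline{E}_R\otimes_{\EN}M^{N_q}$ acts through the first factor, and by Lemma \ref{lem:sGamma*D} (ii) the action of $d\in E_R$ with $\overline d\in\EN$ on $m\in M^{N_q}$ is just $dm$, so the canonical $\CO$-module isomorphism $\overline{d}\otimes m\mapsto dm$ is automatically $E_R$-linear; that it respects the $O(B_q)$-coaction is the coinvariance condition defining $M^{N_q}$ combined with \eqref{eq:yd}. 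For the other composite, given $K\in\Mod_\Lambda(\EN)$ one needs a natural isomorphism $(\overline{E}_R\otimes_{\EN}K)^{N_q}\cong \varpi K$ in the quotient category; the $\CO$-side identity $(O(G_q)\otimes_{A_q}K)^{N_q}=\tGamma(\varpi K)$ from Corollary \ref{cor:equivalence} (ii) together with \eqref{eq:DO} reduces this to showing that the $\EN$-module structure on the left agrees with the one coming from $\tGamma(\varpi\EN)$-module structure, and that the difference between $\tGamma(\varpi K)$ and $K$ lies in $\Tor_{\Lambda^+}({A_q})$, so becomes an isomorphism after $\varpi$.

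The main obstacle I anticipate is verifying that the isomorphisms produced on the underlying $\CO$-module level are genuinely morphisms in $\Mod(\DD_{G_q},B_q)$, i.e.\ compatible with the full left $E_R$-action (not merely the $O(G_q)$-action) and with \eqref{eq:triaction}. Concretely, one must chase the relations \eqref{eq:relER}, \eqref{eq:rac1}--\eqref{eq:rac6}, \eqref{eq:yd}, and the defining conditions (A), (B) of \ref{subsec:DGB} through the tensor products; the bookkeeping in Lemma \ref{lem:sGamma*D} and Lemma \ref{lem:EEN} is designed precisely to make $\overline{E}_R$ an $(E_R,\EN)$-bimodule so that these checks reduce to Sweedler-notation identities. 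A secondary point is confirming that $\overline{E}_R\otimes_{\EN}(\bullet)$ is exact — this follows from \eqref{eq:DO} and Corollary \ref{cor:equivalence} (iii), since $O(G_q)\otimes_{A_q}(\bullet)$ is exact — which is needed for the descended functor on the quotient category to be well-behaved and for the equivalence in \eqref{eq:equiv1} to be an equivalence of abelian categories. Once exactness and the two natural isomorphisms of composites are in hand, the universal property of the Serre quotient yields \eqref{eq:equiv1} with the stated inverse.
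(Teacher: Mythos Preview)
Your proposal is correct and follows essentially the same approach as the paper's proof: use \eqref{eq:DO} together with Corollary \ref{cor:equivalence} (iii), (iv) to see that $\overline{E}_R\otimes_{\EN}(\bullet)$ is exact and annihilates $\Tor_{\Lambda^+}({A_q})$, hence descends to the quotient; then invoke Corollary \ref{cor:equivalence} (i), (v) to identify $\varpi\circ((\bullet)^{N_q})$ as the inverse. The paper's proof is extremely terse (three lines), and the compatibility checks you flag as the ``main obstacle'' --- that the natural $\CO$-module isomorphisms lift to $\Mod(\DD_{G_q},B_q)$ --- are precisely what the paper sweeps into ``we see easily''; your outline of how the $E_R$-linearity follows from the definition of the map $\overline{d}\otimes m\mapsto dm$ and how the $O(B_q)$-comodule compatibility follows from condition (A) and the coinvariance of $m\in M^{N_q}$ is the right way to fill this in.
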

\begin{proof}
By Corrollary \ref{cor:equivalence} (iii), (iv) the functor 
$\overline{E}_R\otimes_\EN(\bullet)$
in \eqref{eq:spiE} is exact and induces 
\[
\Mod_\Lambda(\EN)
/
(\Mod_\Lambda(\overline{E}^{N_q}_R)
\cap \Tor_{\Lambda^+}({A_q}))
\to
\Mod(\DD_{G_q},B_q).
\]
We see easily from Corollary \ref{cor:equivalence} (i) 
that $\varpi\circ((\bullet)^{N_q})$ gives the inverse.
\end{proof}
\subsection{}
\label{subsec:ad-on-E}
The left action \eqref{eq:lac1} of $U_q(\Gg)$ on $E_R$ induces those on $\overline{E}_R$ and 
$\overline{E}_R^{N_q}$.
Note that  $\overline{E}_R^{N_q}$ is  an object of $\Mod_\Lambda^\eq({A_q})$ with respect to the left integrable  $U_q(\Gg)$-module structure
\begin{equation}
\label{eq:ENReq1}
U_q(\Gg)\times \overline{E}_R^{N_q}
\ni(u,\overline{d})
\mapsto
\overline{u\lac d}\in
(\overline{E}_R^{N_q})^\op
\end{equation}
induced by \eqref{eq:lac1},
and the left ${A_q}$-module structure 
\begin{equation}
\label{eq:ENReq2}
{A_q}\times \overline{E}_R^{N_q}
\ni(\psi,\overline{d})
\mapsto
\overline{\psi d}\in
\overline{E}_R^{N_q}
\end{equation}
given by \eqref{eq:AEN}.
\subsection{}
For $\lambda\in\Lambda$ we denote by 
$\Mod(\DD_{G_q},B_q,\lambda)$ 
the full subcategory of 
$\Mod(\DD_{G_q},B_q)$ 
consisting of $M\in \Mod(\DD_{G_q},B_q)$ satisfying 
\[
h^\circ m=\sum_{(h)}\chi_\lambda(h_{(1)})m\Uac h_{(0)}
\qquad
(h\in U_q(\Gh), m\in M).
\]
Namely, 
$\Mod(\DD_{G_q},B_q,\lambda)$ consists of a left $E_R$-module $M$ equipped with a left $O(B_q)$-comodule structure 
$\beta_M:M\to O(B_q)\otimes M$ satisfying the 
condition (A) in \ref{subsec:DGB} and the following condition:
\begin{itemize}
\item[(B;$\lambda$)]
we have
\[
y^\circ m=
\sum_{y}\tchi_\lambda(y_{(1)})m\Uac y_{(0)}
\qquad(y\in U_q(\Gb), \; m\in M),
\]
with respect to \eqref{eq:triaction}.
\end{itemize}

We denote by 
$\Mod_\Lambda(
\overline{E}_R^{N_q},\lambda)$
the full subcategory of 
$\Mod_\Lambda(
\overline{E}_R^{N_q})$ consisting of 
$M\in\Mod_\Lambda(\overline{E}_R^{N_q})$ 
satisfying 
\[
\overline{h^\circ}|_{M(\mu)}
=\chi_{\lambda+\mu}(h)\id
\qquad(h\in U_q(\Gh)).
\]
We see easily that 
\eqref{eq:sGamma*D}, \eqref{eq:spiE} induce
\begin{equation}
\label{eq:sGamma*Dlam}
(\bullet)^{N_q}:
\Mod(\DD_{G_q},B_q,\lambda)
\to
\Mod_\Lambda(\EN,\lambda),
\end{equation}
\begin{equation}
\label{eq:spiElam}
\overline{E}_R\otimes_{\EN}(\bullet):
\Mod_\Lambda(\EN,\lambda)\to\Mod(\DD_{G_q},B_q,\lambda).
\end{equation}

Moreover, 
\eqref{eq:spiElam} induces an equivalence
\begin{equation}
\label{eq:equiv1lam}
\Mod_\Lambda(\overline{E}^{N_q}_R,\lambda)
/(
\Mod_\Lambda(\overline{E}^{N_q}_R,\lambda)
\cap
\Tor_{\Lambda^+}({A_q}))
\simto
\Mod(\DD_{G_q},B_q,\lambda),
\end{equation}
whose inverse is  the composite of 
\begin{align*}
\Mod(\DD_{G_q},B_q,\lambda)
&\xrightarrow{(\bullet)^{N_q}}
\Mod_\Lambda(\EN,\lambda)
\\
&\xrightarrow{\varpi}
\Mod_\Lambda(\overline{E}^{N_q}_R,\lambda)
/(
\Mod_\Lambda(\overline{E}^{N_q}_R,\lambda)
\cap
\Tor_{\Lambda^+}({A_q})).
\end{align*}

\section{Comparison of the two categories of $\DD$-modules}
\subsection{}
The main result of this paper is the following.
\begin{theorem}
\label{thm:main}
\begin{itemize}
\item[(i)]
We have a natural equivalence
\begin{equation}
\label{eq:main-th1}
\Mod(\tDDf_{\CB_q})
\cong
\Mod(\DD_{G_q},B_q)
\end{equation}
of abelian categories.
\item[(ii)]
For any $\lambda\in\Lambda$ the equivalence \eqref{eq:main-th1} induces the equivalence
\begin{equation}
\label{eq:main-th2}
\Mod(\DD^f_{\CB_q,\lambda})
\cong
\Mod(\DD_{G_q},B_q,\lambda)
\end{equation}
of full subcategories.

\end{itemize}
\end{theorem}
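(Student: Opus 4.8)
The strategy is to set up an equivalence on the level of the graded rings $\Df$ (from the $\Proj$ picture) and $\EN$ (from the pull-back picture), and then descend to the quotient categories. The two rings carry compatible extra structure: each is a $\Lambda$-graded algebra, each is an object of $\Mod_\Lambda^\eq(A_q)$ via an adjoint action of $U_q(\Gg)$ (see \ref{subsec:ad-on-D} and \ref{subsec:ad-on-E}), and each acts on $A_q$ extending its $A_q$-module structure. The key point will be a canonical isomorphism
\begin{equation}
\label{eq:DEN-iso}
\tGamma(\varpi\Df)\cong\overline{E}_R^{N_q}
\end{equation}
of $\Lambda$-graded algebras which is compatible with all the extra data. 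Granting \eqref{eq:DEN-iso}, the theorem follows by combining \eqref{eq:equiv2cat} with Proposition \ref{prop:equiv1}: both sides of \eqref{eq:main-th1} are realized as the quotient of a module category over a ring which, after passing to $\tGamma\circ\varpi$, is identified by \eqref{eq:DEN-iso}, and the $\Tor_{\Lambda^+}(A_q)$-subcategories match because in both realizations they are detected on the underlying object of $\Mod_\Lambda(A_q)$. Part (ii) is then immediate: the $\lambda$-conditions are cut out by $\sigma_h|_{M(\mu)}=\chi_{\lambda+\mu}(h)\id$ on one side and $\overline{h^\circ}|_{M(\mu)}=\chi_{\lambda+\mu}(h)\id$ on the other, and one only needs \eqref{eq:DEN-iso} to carry $\sigma_h$ to $\overline{h^\circ}$, after which \eqref{eq:equiv2catlam} and \eqref{eq:equiv1lam} finish the job.

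\textbf{Constructing the ring map.} First I would produce an algebra homomorphism $\Df\to\EN$. Recall $\Df\subset\End_\BF(A_q)$ is generated by $\ell_\varphi$ ($\varphi\in A_q$), $\deru_u$ ($u\in\Uf(\Gg)$), and $\sigma_h$ ($h\in U_q(\Gh)$). On the target, $A_q=O(G_q)^{N_q}$ is a graded left $\EN$-module by \eqref{eq:AinEN} and Lemma \ref{lem:sGamma*D}; I would send $\ell_\varphi\mapsto\overline{\varphi}$ via \eqref{eq:AEN}, $\deru_u\mapsto\overline{\Psi_{RL}(u)}$ for $u\in\Uf(\Gg)$ (using that $\Psi_{RL}(u)\rac y=\varepsilon(y)\Psi_{RL}(u)$ by \eqref{eq:rac5}, so $\overline{\Psi_{RL}(u)}\in\overline{E}_R^{N_q}$), and $\sigma_h\mapsto\overline{h^\circ}$ for $h\in U_q(\Gh)$. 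One must check this respects the defining relations \eqref{eq:rel0}--\eqref{eq:rel6}; the left-multiplication relations \eqref{eq:rel1} and \eqref{eq:rel4} come from Lemma \ref{lem:sGamma*D}(i) together with the commutation rules \eqref{eq:relER} in $E_R$ and the intertwining $u\lac\Psi_{RL}(v)=\Psi_{RL}(\ad(u)(v))$, while \eqref{eq:rel5}, \eqref{eq:rel6}, \eqref{eq:zsigma} are bookkeeping in $E_R$ modulo $J$. To see the resulting map lands in $\EN$ and is a graded algebra map compatible with the $U_q(\Gg)$-action and the $A_q$-module structure, I would compare the two actions on $A_q$. Then, applying $\tGamma\circ\varpi$ and using Propositions \ref{prop:BW} and \ref{prop:Gamma-compati} (which identify $\tGamma(\varpi A_q)\cong A_q$ compatibly), I get the map \eqref{eq:DEN-iso}; injectivity follows as in the last Proposition of \S7 (both rings inject into $\End_\BF(A_q)$), and surjectivity should follow from the patching Proposition \ref{prop:patch} applied after localizing at each $S_w$, exactly as one proves $\tGamma(\varpi\Df)$ is not much bigger than $\Df$.

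\textbf{The main obstacle.} The hard part will be the surjectivity/bijectivity in \eqref{eq:DEN-iso}, i.e.\ showing that $\overline{E}_R^{N_q}$ is exactly the $\tGamma(\varpi\Df)$ and not something larger or a priori unrelated. The clean way is to localize: by Propositions \ref{prop:Ore}, \ref{prop:OreD} and \ref{prop:patch}, an object or morphism of $\Mod(\CO_{\CB_q})$ (with its $\DD$-enrichment) is controlled by its restrictions $j_{w,q}^*$ to the affine pieces $\Mod(R_{w,q})$, where both $S_w^{-1}\Df$ and $S_w^{-1}\EN$ become concrete: over $R_{w,q}$ the ring $\EN$ degree-$0$ part is, by \eqref{eq:OENE} and $\overline{E}_R\cong O(G_q)\otimes(U_q(\Gg)^\op/\cdots)$, a quantum analogue of differential operators on the affine open $U_w$, and likewise for $\Df$ as in \cite{T0}. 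Matching these two local descriptions — and checking the identifications glue over $w\in W$ — is where the real work lies; once it is done, faithfully flat/patching descent gives \eqref{eq:DEN-iso} globally. A secondary technical point is verifying that the functor $\overline{E}_R\otimes_{\EN}(\bullet)$ of \eqref{eq:spiE}, restricted along $\Df\to\EN$, agrees with the pull-back $p_q^*$ composed with the $\Df$-enrichment, so that the equivalences of Proposition \ref{prop:equiv1} and \eqref{eq:equiv2cat} are genuinely inverse to one another and respect $(\bullet)[\lambda]$, $(\bullet)^{N_q}$, and the global section functors; this is a diagram chase using Corollary \ref{cor:equivalence} but should be routine once \eqref{eq:DEN-iso} is in hand.
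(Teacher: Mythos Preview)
Your overall strategy---reduce to an isomorphism $\tGamma(\varpi\Df)\cong\EN$ of $\Lambda$-graded algebras, and then invoke \eqref{eq:equiv2cat} and Proposition \ref{prop:equiv1}---matches the paper exactly, and your treatment of part (ii) via the correspondence $\sigma_h\leftrightarrow\overline{h^\circ}$ is fine. But there is a genuine gap in how you construct the comparison map, and the paper's route to bijectivity is quite different from the one you sketch.

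\textbf{The gap.} You propose to define an algebra homomorphism $\Df\to\EN$ by sending the generators $\ell_\varphi,\deru_u,\sigma_h$ to $\overline\varphi,\overline{\Psi_{RL}(u)},\overline{h^\circ}$ and checking the relations \eqref{eq:rel0}--\eqref{eq:rel6}. The problem is that $\Df$ is \emph{not} presented by generators and relations: it is defined as the subalgebra of $\End_\BF(A_q)$ generated by those operators, so its full ideal of relations consists of \emph{all} identities that happen to hold in $\End_\BF(A_q)$, not just \eqref{eq:rel0}--\eqref{eq:rel6}. To know that your assignment descends to $\Df$ you would need to know that $\EN$ acts faithfully on $A_q$, which is neither obvious nor established in the paper. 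The paper circumvents this as follows: it introduces the ``free'' model $\zDf=A_q\otimes\Uf(\Gg)\otimes U_q(\Gh)$ (a genuine algebra with no hidden relations), maps it to \emph{both} $\Df$ and $\EN$, and calls the image in $\EN$ by the name $\iDf$. Since $\Df\subset\End_\BF(A_q)$ and the two $\zDf$-actions on $A_q$ agree, one gets a surjection $\pi:\iDf\twoheadrightarrow\Df$---crucially in the \emph{opposite} direction from what you wrote. There is no map $\Df\to\EN$ of algebras at this stage; the isomorphism only appears after applying $\tGamma\circ\varpi$, which turns $\pi$ into an isomorphism $\pi_*$ and the inclusion $\jmath:\iDf\hookrightarrow\EN$ into an isomorphism $\jmath_*$, so that $F=\jmath_*\circ\pi_*^{-1}$ is the desired identification. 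That $F$ is an algebra map (not just a module map) is then a separate small argument using the description of $\tGamma(\varpi\Df)$ in \eqref{eq:gpD2} and of $\EN$ in \eqref{eq:E-mult2}.

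\textbf{The different route to bijectivity.} For the isomorphisms $\varpi(\iDf)\cong\varpi(\Df)$ and $\varpi(\iDf)\cong\varpi(\EN)$ in $\Mod(\CO_{\CB_q})$, you propose to localize at the $S_w$'s and compare degree-zero pieces over each $R_{w,q}$. The paper takes a completely different, and shorter, path: it observes that all three objects lie in $\Mod_\Lambda^\eq(A_q)$ and invokes the equivalence $\iota_q^*:\Mod^\eq(\CO_{\CB_q})\simto\Comod(O(B_q))$ of Proposition \ref{prop:iota} (fiber at the base point), so that it suffices to check bijectivity after applying the exact functor $\BF\otimes_{A_q}(\bullet)$. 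This reduces everything to a single concrete computation: under $\BF\otimes_{A_q}\EN\cong U_q(\Gb^+)^\op$ (from \eqref{eq:OENE}), the map to $\Hom_\BF(A_q,\BF)$ is injective by a standard highest-weight argument, giving injectivity of $\overline\pi$; surjectivity of $\overline\jmath$ comes from Lemma \ref{lem:gen}. Your localization idea might be made to work, but it would require computing $S_w^{-1}\EN$ explicitly and matching it with $S_w^{-1}\Df$, which the paper does not do and which looks considerably more laborious than the equivariant fiber argument.
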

We first show (i).
In  view of \eqref{eq:equiv2cat} and \eqref{eq:equiv1}
it  is a consequence of  Proposition \ref{prop:main} below.
\begin{proposition}
We have an isomorphism
\[
\tGamma(\varpi {\Df})\cong\EN
\]
of $\Lambda$-graded $\BF$-algebras.
\label{prop:main}
\end{proposition}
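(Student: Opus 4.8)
The goal is to identify the graded algebra $\tGamma(\varpi\Df)$ with $\EN=\overline{E}_R^{N_q}$. The natural route is to produce a morphism between the two, using the explicit descriptions obtained in the previous sections, and then check it is bijective degree-by-degree. More precisely, recall from \eqref{eq:gpD}--\eqref{eq:gpD2} that
\[
\tGamma(\varpi\Df)
\cong
\Bigl(\bigoplus_{\lambda\in\Lambda}\Hom^\gr_{\Df}(\tGamma(\varpi\Df),\tGamma(\varpi\Df)[\lambda])\Bigr)^\op,
\]
and from \eqref{eq:E-mult}--\eqref{eq:E-mult2} that
\[
\EN
\cong
\Bigl(\bigoplus_{\lambda\in\Lambda}\Hom_{\DD_{G_q},B_q}(\overline{E}_R,\overline{E}_R[\lambda])\Bigr)^\op.
\]
So it suffices to construct an equivalence of categories carrying $\varpi\Df\in\Mod(\tDDf_{\CB_q})$ to $\overline{E}_R\in\Mod(\DD_{G_q},B_q)$ compatibly with the shift functors $(\bullet)[\lambda]$; the induced isomorphism on $\Hom$-spaces then gives the claimed algebra isomorphism, after taking op's.

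\textbf{Key steps.} First I would compare $\Df$ and $\EN$ as objects of $\Mod_\Lambda^\eq({A_q})$: both carry a left ${A_q}$-action by left multiplication (via $\ell_\varphi$ resp.\ \eqref{eq:ENReq2}) and an integrable $U_q(\Gg)$-action (the $\ad$-action \eqref{eq:ad-on-D} resp.\ \eqref{eq:ENReq1}). I would build an explicit ${A_q}$-linear, $U_q(\Gg)$-equivariant map between them. The point is that $\Df$ is generated as an algebra by $\ell_\varphi$, $\deru_u$ ($u\in\Uf(\Gg)$), $\sigma_h$; via \eqref{eq:zsigma} the $\sigma_h$ for $h\in\Ue(\Gh)^{W\circ}$ are already $\deru_z$ with $z$ central, and by Lemma \ref{lem:gen} together with the relations \eqref{eq:rel0}--\eqref{eq:6a} the whole of $\Df$ is spanned by products $\ell_\varphi\deru_u\sigma_h$. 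On the other side $\EN$, by \eqref{eq:AinEN} and Lemma \ref{lem:sGamma*D}, is generated over $\overline{E}_R$ by the image of ${A_q}$ and the classes $\overline{v^\circ}$ for $v$ in the $\ad$-finite part; indeed $\overline{E}_R\cong O(G_q)\otimes\bigl(U_q(\Gg)^\op/\sum_{y\in\tU_q(\Gn)}U_q(\Gg)^\op(y^\circ-\varepsilon(y))\bigr)$, and taking $N_q$-invariants picks out a copy of (a quotient of) $\Uf(\Gg)$ paired against ${A_q}$. I would match $\deru_u\leftrightarrow$ the image of $\Psi_{RL}(u)$ (using Lemma \ref{lem:fELR}, \eqref{eq:rac5} and \eqref{eq:lac4}, which say $\Psi_{RL}(\Uf(\Gg))$ is $\rac$-trivial and $\lac$-equivariant in exactly the way the $\ad$-action on $\Df$ is), $\ell_\varphi\leftrightarrow\overline{\varphi}$, and $\sigma_h$ for $h$ in the relevant torus part to the appropriate grading/central elements. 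Then I would verify that the defining relations \eqref{eq:rel1}--\eqref{eq:6a} of $\Df$ hold for these images in $\EN$ — this uses \eqref{eq:relER}, \eqref{eq:rac5}, \eqref{eq:rac6}, \eqref{eq:lac3} — so the assignment extends to an algebra homomorphism $\Df\to\EN$, automatically $\Lambda$-graded and compatible with the $\ad$-action and the left ${A_q}$-module structure.

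Having such a homomorphism $\Df\to\EN$, I would next observe it induces a functor $\Mod_\Lambda(\EN)\to\Mod_\Lambda(\Df)$ by restriction, hence (since $\EN$-modules and $\Df$-modules have the same underlying ${A_q}$-modules, and $\Tor_{\Lambda^+}({A_q})$ is defined purely in terms of the ${A_q}$-structure) a functor $\Mod(\DD_{G_q},B_q)\to\Mod(\tDDf_{\CB_q})$ via \eqref{eq:equiv1}. Restricting to the rank-one free modules, $\EN$ itself maps to $\Df$, i.e.\ $\overline{E}_R=\overline{E}_R\otimes_{\EN}\EN$ corresponds to $\varpi\Df=\varpi(\Df)$ (here I use $\varpi(O(G_q)\otimes_{A_q}\EN)\cong\varpi(O(G_q)\otimes_{A_q}\Df$-via-${A_q})$ and the identifications of Corollary \ref{cor:equivalence}); compatibility with $(\bullet)[\lambda]$ is immediate from the definitions of the shift functors. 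Applying $\Hom$ and comparing with \eqref{eq:gpD2} and \eqref{eq:E-mult2} then yields an algebra homomorphism $\EN\to\tGamma(\varpi\Df)$, and one checks it is inverse — up to the canonical map \eqref{eq:DGammaD} ${\Df}\to\tGamma(\varpi{\Df})$ and \eqref{eq:AEN} — to the map $\Df\to\EN$ constructed above, after passing to $\tGamma(\varpi\cdot)$. Bijectivity is finally checked in each degree $\lambda$: both sides are finitely generated free $R_{w,q}$-modules after localizing at $S_w$ (using Proposition \ref{prop:OreD}, Proposition \ref{prop:Ore}, and the patching Proposition \ref{prop:patch}), and one compares ranks, or alternatively one notes that the composite $\Df\to\EN\to\tGamma(\varpi\Df)$ is the canonical injection \eqref{eq:DGammaD} and that $\EN\to\tGamma(\varpi\Df)$ is surjective because $\tGamma(\varpi\Df)=\tGamma(\varpi(\overline{E}_R^{N_q}))$ and $\overline{E}_R^{N_q}$ is already "$\tGamma$-closed" by the global-sections computation underlying Proposition \ref{prop:BW} (applied to the equivariant object $\Df\in\Mod_\Lambda^\eq({A_q})$, equivalently $\EN$).

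\textbf{Main obstacle.} The routine algebra (checking the relations \eqref{eq:rel1}--\eqref{eq:6a} transport correctly, and that the two maps are mutually inverse) is tedious but mechanical given Lemmas \ref{lem:fELR}, \ref{lem:sGamma*D}, \ref{lem:EEN} and the bimodule identities. The genuinely delicate point is showing that $\tGamma(\varpi\Df)$ is no bigger than $\EN$, i.e.\ that \eqref{eq:DGammaD} composed with nothing extra already accounts for all global sections — in other words, that $\EN$ is "$S_w$-locally complete" in the same sense that ${A_q}$ is in Proposition \ref{prop:BW}. I expect this to require running the argument of \cite[Lemma 4.5 or the proof of Proposition \ref{prop:BW}]{T0} with $\Df$ (equivalently $\EN$) in place of ${A_q}$: one needs that $\Df$, viewed in $\Mod_\Lambda^\eq({A_q})$ via \ref{subsec:ad-on-D}, has vanishing "higher cohomology" and that its degree-$\lambda$ part is recovered by patching the localizations $S_w^{-1}\Df$ over $w\in W$ — which is exactly where Proposition \ref{prop:OreD} (Ore conditions and injectivity of $\Df\to S_w^{-1}\Df$) and the patching property \eqref{eq:Apatch}/Proposition \ref{prop:patch} enter. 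Once that local-to-global statement is in hand, the isomorphism of graded algebras follows formally.
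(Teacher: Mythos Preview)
Your proposal has a genuine gap at its very first step: you write ``the assignment extends to an algebra homomorphism $\Df\to\EN$'' once the relations \eqref{eq:rel0}--\eqref{eq:6a} are verified on the proposed images. But $\Df$ is \emph{not} presented by generators and relations; it is defined as the subalgebra of $\End_\BF(A_q)$ generated by $\ell_\varphi,\deru_u,\sigma_h$. The identities \eqref{eq:rel0}--\eqref{eq:6a} are consequences of this definition, not a complete presentation, so checking them in $\EN$ does not produce a well-defined map out of $\Df$. In fact the paper constructs the comparison map in the \emph{opposite} direction: it introduces the explicitly presented algebra $\zDf=A_q\otimes\Uf(\Gg)\otimes U_q(\Gh)$, which surjects onto both $\Df$ (via $p$) and onto the image $\iDf\subset\EN$ (via \eqref{eq:delta}), and then proves (Proposition \ref{prop:Disom}) that $\Ker p'\subset\Ker p$ by exploiting the action of $\iDf$ on $A_q\in\Mod_\Lambda(\EN)$. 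This yields a surjection $\pi:\iDf\twoheadrightarrow\Df$, not an injection the other way.

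The second gap is in your bijectivity argument. You suggest comparing ranks after localization, or invoking that $\EN$ is ``$\tGamma$-closed'' in analogy with Proposition \ref{prop:BW}. The paper instead proves the sharper fact (Proposition \ref{prop:Disom2}) that $\pi$ and $\jmath$ already become isomorphisms after applying $\varpi$ as morphisms in $\Mod^\eq(\CO_{\CB_q})$, by passing to the fiber $\BF\otimes_{A_q}(\bullet)$ via Proposition \ref{prop:iota}. The substantive point you are missing there is the injectivity of the fiber map
\[
U_q(\Gb^+)^\op\;\longrightarrow\;\Hom_\BF(A_q,\BF),\qquad x^\circ\mapsto[\varphi\mapsto\langle\varphi,x\rangle],
\]
which requires a genuine representation-theoretic argument (linear independence of $\{v^*_\lambda x_r\}$ for large $\lambda$). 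Once $\varpi(\iDf)\cong\varpi(\Df)\cong\varpi(\EN)$ is established, applying $\tGamma$ gives the linear isomorphism $F=\jmath_*\circ\pi_*^{-1}$ directly, and only then does one verify $F$ is an algebra map by the $\Hom$-space argument you sketched in your first paragraph; that final step you have essentially right.
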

We give a proof of Proposition \ref{prop:main} in the following.
\subsection{}
Set
\[
\zDf={A_q}\otimes \Uf(\Gg)\otimes U_q(\Gh).
\]
It is easily seen that 
$\zDf$ 
is endowed with a $\Lambda$-graded $\BF$-algebra structure by the multiplication
\begin{align*}
&(\varphi\otimes u\otimes h)
(\varphi'\otimes u'\otimes h')
=\sum_{(u), (h)}\chi_\lambda(h_{(0)})
\varphi(u_{(0)}\cdot\varphi')\otimes u_{(1)}u'
\otimes h_{(1)}h'
\\
&\qquad\qquad\qquad
(\varphi\in {A_q}, \varphi'\in {A_q}(\lambda), u,u'\in \Uf(\Gg), h, h'\in U_q(\Gh))
\end{align*}
and the grading $\zDf(\lambda)={A_q}(\lambda)\otimes \Uf(\Gg)\otimes U_q(\Gh)$ ($\lambda\in\Lambda$).
We have a surjective homomorphism
\begin{equation}
\label{eq:ztoD}
p:\zDf\to \Df
\qquad
(\varphi\otimes u\otimes h\mapsto \ell_\varphi\deru_u\sigma_h)
\end{equation}
of $\Lambda$-graded $\BF$-algebras.

Similarly to Proposition \ref{prop:OreD} we see that 
for any $w\in W$ the multiplicative set $S_w$ satisfies the left and right Ore conditions in $\zDf$.
Moreover, the canonical homomorphism $\zDf\to S_w^{-1}(\zDf)$ is injective.

We define an abelian category 
$\Mod(\tDDzf_{\CB_q})$ by
\begin{equation}
\label{eq:D0}
\Mod(\tDDzf_{\CB_q})
=
\Mod_\Lambda(\zDf)/
(\Mod_\Lambda(\zDf)\cap\Tor_{\Lambda^+}({A_q})).
\end{equation}
The natural functor 
\begin{equation}
\label{eq:pi D0}
\varpi_{\zDf}:
\Mod_\Lambda(\zDf)\to \Mod(\tDDzf_{\CB_q})
\end{equation}
admits a right adjoint 
\begin{equation}
\label{eq:Gamma* D0}
\tGamma_{\zDf}:\Mod(\tDDzf_{\CB_q})
\to \Mod_\Lambda(\zDf),
\end{equation}
satisfying $\varpi_{\zDf}\circ\tGamma_{\zDf}\simto\Id$.
By the universal property  of the localization of categories the natural functor 
\[
p^*:\Mod_\Lambda(\Df)
\to
\Mod_\Lambda(\zDf)
\]
given by $p:\zDf\to \Df$ 
induces an exact functor
\[
p^*:\Mod(\tDDf_{\CB_q})
\to
\Mod(\tDDzf_{\CB_q})
\]
such that the following diagram is commutative:
\[
\vcenter{
\xymatrix{
\Mod_\Lambda(\Df)
\ar[r]^{p^*}
\ar[d]_{\varpi}
&
\Mod_\Lambda(\zDf)
\ar[d]^{\varpi_{\zDf}}
\\
\Mod(\tDDf_{\CB_q})
\ar[r]^{p^*}
&
\Mod(\tDDzf_{\CB_q}).
}}
\]

Similarly to Proposition \ref{prop:Gamma-compati}
we have the following.
\begin{proposition}
\label{prop:Gamma-compati2}
We have the following commutative diagram of functors:
\[
\vcenter{
\xymatrix{
\Mod(\tDDf_{\CB_q})
\ar[r]^{p^*}
\ar[d]_{\tGamma}
&
\Mod(\tDDzf_{\CB_q})
\ar[d]^{\tGamma_{\zDf}}
\\
\Mod_\Lambda(\Df)
\ar[r]^{p^*}
&
\Mod_\Lambda(\zDf).
}}
\]
\end{proposition}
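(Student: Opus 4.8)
The plan is to argue exactly as in the proof of Proposition~\ref{prop:Gamma-compati} (i.e.\ \cite[Lemma~4.5]{T0}), in two stages.

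First I would establish the $\zDf$-analogue of Proposition~\ref{prop:Gamma-compati}. Denote also by $\For$ the evident forgetful functors $\Mod_\Lambda(\zDf)\to\Mod_\Lambda({A_q})$ and $\Mod(\tDDzf_{\CB_q})\to\Mod(\CO_{\CB_q})$ (restriction along ${A_q}\hookrightarrow\zDf$, and its descent to the quotient categories). The claim is $\For\circ\tGamma_{\zDf}\cong\tGamma\circ\For$; equivalently, the graded ${A_q}$-module underlying $\tGamma_{\zDf}(\GN)$ depends only on the image of $\GN$ in $\Mod(\CO_{\CB_q})$. The proof of \cite[Lemma~4.5]{T0} goes through verbatim with $\zDf$ in place of $\Df$: it uses the patching Proposition~\ref{prop:patch} to reduce to the affine charts $U_w$, over which the section functor is computed by the Ore localization $S_w^{-1}(\bullet)$; the only property of $\Df$ that enters is the content of Proposition~\ref{prop:OreD}, and precisely this property for $\zDf$ has been recorded above ($S_w$ is Ore in $\zDf$ and $\zDf\hookrightarrow S_w^{-1}\zDf$). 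The ambient torsion subcategory is in both cases $\Tor_{\Lambda^+}({A_q})$, pulled back along the structure map out of ${A_q}$, so that part of the bookkeeping is unchanged.

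Second, I would assemble the square. The commutative square $\varpi_{\zDf}\circ p^*=p^*\circ\varpi$ recorded just before the proposition, together with the adjunctions $\varpi\dashv\tGamma$ and $\varpi_{\zDf}\dashv\tGamma_{\zDf}$, yields the canonical mate natural transformation
\[
\theta\colon\; p^*\circ\tGamma\;\Longrightarrow\;\tGamma_{\zDf}\circ p^*.
\]
Because $p$ restricts to the identity on ${A_q}$, restriction of scalars along $p$ commutes with $\For$ — both for graded modules and, after passing to quotient categories, for the $\DD$-module categories — so $\For\circ p^*=\For$ in both places. Hence, for $\GM\in\Mod(\tDDf_{\CB_q})$, applying $\For$ to $\theta_\GM$ and invoking Proposition~\ref{prop:Gamma-compati} on the source and its $\zDf$-analogue on the target identifies $\For(\theta_\GM)$ with an endomorphism of $\tGamma(\For\GM)$; a routine diagram chase with the triangle identities (the composite of these mates is the mate of a composed square which, since $\For\circ p^*=\For$, is a trivial one) shows that this endomorphism is an isomorphism, in fact the identity under the canonical identifications. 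Since a morphism of $\zDf$-modules that is bijective on underlying sets is an isomorphism, and $\For$ detects bijectivity, $\theta_\GM$ is an isomorphism, natural in $\GM$; this is the asserted commutativity.

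The substantive step is the first one: the reconstruction of the $\zDf$-module $\tGamma_{\zDf}(\GN)$, as far as its underlying ${A_q}$-module goes, from the underlying $\CO_{\CB_q}$-module of $\GN$ alone, via patching over the charts $U_w$. Everything else is formal. The only point needing genuine (if routine) care is to confirm that no step of \cite[Lemma~4.5]{T0} used a feature of $\Df$ beyond the Ore properties it shares with $\zDf$; since $\zDf={A_q}\otimes\Uf(\Gg)\otimes U_q(\Gh)$ is free over ${A_q}$ and maps onto $\Df$, and satisfies the same localization properties, this is unproblematic.
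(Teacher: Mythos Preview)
Your proposal is correct and aligns with the paper's own indication, which is simply ``Similarly to Proposition~\ref{prop:Gamma-compati}'' with no further detail. The ingredients you identify---patching via Proposition~\ref{prop:patch} and the Ore property of $S_w$ in $\zDf$ recorded just above---are exactly what is needed.

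One minor remark on route: the most literal reading of ``similarly'' would be to run the argument of \cite[Lemma~4.5]{T0} \emph{directly} for the functor $p^*$ in place of $\For$, i.e.\ to compute both $\tGamma_{\Df}(\GM)$ and $\tGamma_{\zDf}(p^*\GM)$ via the affine charts $S_w^{-1}(\bullet)$ and observe that $p^*$ commutes with these localizations (since $p$ is the identity on $A_q$ and $S_w$ is Ore in both rings). Your approach instead establishes the $\zDf$-analogue of Proposition~\ref{prop:Gamma-compati} (compatibility with $\For$ to $A_q$-modules) and then deduces the $p^*$-compatibility by a mate argument and conservativity of $\For$. This is a slight detour---you prove two ``$\For$-squares'' and paste them, rather than one ``$p^*$-square'' directly---but the underlying computation on the affine charts is identical, and your formal Step~2 is a clean way to package it. Either path is fine; the content is the same.
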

In the rest of this paper we simply write $\varpi$ and $\tGamma$ for $\varpi_{\zDf}$ and $\tGamma_{\zDf}$ respectively.

\subsection{}
Now we relate $\zDf$ with $\EN$.
\begin{lemma}
\begin{itemize}
\item[(i)]
For 
$\varphi\in {A_q}(\lambda)$, 
$u\in \Uf(\Gg)$, $h\in U_q(\Gh)$ with 
$\lambda\in\Lambda$ we have
\[
\overline{\Psi_{RL}(\varphi u)h^\circ}
\in \overline{E}_R^{N_q}(\lambda).
\]
\item[(ii)]
The  linear map
\begin{equation}
\label{eq:delta}
\zDf\to\overline{E}_R^{N_q}
\qquad
\left(\varphi\otimes u\otimes h\mapsto
\overline{\Psi_{RL}(\varphi u)h^\circ}
\right)
\end{equation}
is a homomorphism of $\Lambda$-graded $\BF$-algebras.
\end{itemize}
\end{lemma}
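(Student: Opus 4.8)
Denote by $\delta:\zDf\to\EN$ the linear map of \eqref{eq:delta}. The plan is to obtain both assertions by direct computation inside $E_R$, the passage to $\overline{E}_R=E_R/J$ being needed only in (i), and there only through the inclusions $E_R\cdot S(f_i)^\circ\subset J$ for $i\in I$ (which hold because $S(f_i)\in\tU_q(\Gn)$ and $\varepsilon(S(f_i))=0$). I will use repeatedly that $\Psi_{RL}:E_{L,f}\to E_{R,f}$ is an algebra isomorphism (Lemma~\ref{lem:fELR}) restricting to the identity of $O(G_q)$, so that $\Psi_{RL}(\varphi u)=\varphi\,\Psi_{RL}(u)$ in $E_R$, together with $\varphi\cdot y=\tchi_\lambda(y)\varphi$ for all $y\in U_q(\Gb)$ when $\varphi\in A_q(\lambda)$ (see \eqref{eq:Alam1}).

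I would prove (i) first. Since $\rac$ is a right $U_q(\Gg)$-action that descends to $\overline{E}_R$ (Lemma~\ref{lem:Eb}) and $\tchi_\lambda$ is a character, the set of $y\in U_q(\Gb)$ with $\overline{\Psi_{RL}(\varphi u)h^\circ}\rac y=\tchi_\lambda(y)\,\overline{\Psi_{RL}(\varphi u)h^\circ}$ is a subalgebra of $U_q(\Gb)$, so it suffices to treat the generators $y=k_\mu$ ($\mu\in\Lambda$) and $y=S(f_i)$ ($i\in I$); note $f_i=-S(f_i)k_i^{-1}$. For $y=k_\mu$ one expands $(\Psi_{RL}(\varphi u)h^\circ)\rac k_\mu$ by \eqref{eq:rac6} with $\Delta(k_\mu)=k_\mu\otimes k_\mu$: by \eqref{eq:rac4} and \eqref{eq:rac5} the first factor contributes $(\varphi\cdot k_\mu)\Psi_{RL}(u)=\chi_\lambda(k_\mu)\,\Psi_{RL}(\varphi u)$, while by \eqref{eq:rac3} and commutativity of $U_q(\Gh)$ one has $h^\circ\rac k_\mu=(\ad(k_{-\mu})(h))^\circ=h^\circ$; hence $(\Psi_{RL}(\varphi u)h^\circ)\rac k_\mu=\tchi_\lambda(k_\mu)\,\Psi_{RL}(\varphi u)h^\circ$ already in $E_R$. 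For $y=S(f_i)$ one expands by \eqref{eq:rac6} with $\Delta(S(f_i))=k_i\otimes S(f_i)+S(f_i)\otimes 1$. The summand in which $S(f_i)$ acts on the first factor vanishes, since $\Psi_{RL}(\varphi u)\rac S(f_i)=(\varphi\cdot k_i)(\Psi_{RL}(u)\rac S(f_i))+(\varphi\cdot S(f_i))\,\Psi_{RL}(u)=0$ by \eqref{eq:rac4}, \eqref{eq:rac5} and $\varphi\cdot S(f_i)=\tchi_\lambda(S(f_i))\varphi=0$. The other summand is $(\Psi_{RL}(\varphi u)\rac k_i)\,(h^\circ\rac S(f_i))$, and $h^\circ\rac S(f_i)=(\ad(f_i)(h))^\circ$ by \eqref{eq:rac3}; a short computation with $\Delta(f_i)$ gives $\ad(f_i)(h)=(f_ih-hf_i)k_i$, which lies in $S(f_i)U_q(\Gh)$ for $h\in U_q(\Gh)$ (because $f_ih-hf_i\in f_iU_q(\Gh)$ and $f_ik_i=-S(f_i)$), so $(\ad(f_i)(h))^\circ\in U_q(\Gg)^\op\cdot S(f_i)^\circ$. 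Therefore $(\Psi_{RL}(\varphi u)h^\circ)\rac S(f_i)\in E_R\cdot S(f_i)^\circ\subset J$, so its class in $\overline{E}_R$ is $0=\tchi_\lambda(S(f_i))\,\overline{\Psi_{RL}(\varphi u)h^\circ}$. This proves (i), and in particular shows $\delta$ is $\Lambda$-graded and, for $u=h=1$, recovers \eqref{eq:AEN}.

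For (ii), by bilinearity it is enough to compare $\delta$ of a product $(\varphi\otimes u\otimes h)(\varphi'\otimes u'\otimes h')$ with $\varphi'\in A_q(\lambda)$ against $\delta(\varphi\otimes u\otimes h)\,\delta(\varphi'\otimes u'\otimes h')$. By (i) both $\overline{\Psi_{RL}(\varphi u)h^\circ}$ and $\overline{\Psi_{RL}(\varphi'u')(h')^\circ}$ lie in $\EN$, so by Lemma~\ref{lem:sGamma*D}(i) the second expression equals $\overline{\varphi\,\Psi_{RL}(u)\,h^\circ\varphi'\,\Psi_{RL}(u')\,(h')^\circ}$. I would then normalise the interior word in $E_R$ in three moves: move $h^\circ$ past $\varphi'$ by \eqref{eq:relER}, producing $\sum_{(h)}\chi_\lambda(h_{(0)})\varphi'h_{(1)}^\circ$ because $\varphi'\cdot h_{(0)}=\chi_\lambda(h_{(0)})\varphi'$; move $h_{(1)}^\circ$ past $\Psi_{RL}(u')$, which is permitted since $h^\circ$ commutes with $\Psi_{RL}(\Uf(\Gg))$ in $E_R$ — applying \eqref{eq:rac5} to $u=k_\mu$ gives $k_\mu^\circ\Psi_{RL}(v)k_{-\mu}^\circ=\Psi_{RL}(v)$, hence $k_\mu^\circ\Psi_{RL}(v)=\Psi_{RL}(v)k_\mu^\circ$, and $U_q(\Gh)=\bigoplus_\mu\BF k_\mu$; and move $\Psi_{RL}(u)$ past $\varphi'$ by applying the algebra homomorphism $\Psi_{RL}$ to \eqref{eq:relEL}, which gives $\Psi_{RL}(u)\varphi'=\sum_{(u)}(u_{(0)}\cdot\varphi')\Psi_{RL}(u_{(1)})$. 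Combining these, together with $h_{(1)}^\circ(h')^\circ=(h_{(1)}h')^\circ$ (commutativity of $U_q(\Gh)$) and $\Psi_{RL}(u_{(1)})\Psi_{RL}(u')=\Psi_{RL}(u_{(1)}u')$, the interior word becomes $\sum_{(h),(u)}\chi_\lambda(h_{(0)})\,\Psi_{RL}\bigl(\varphi(u_{(0)}\cdot\varphi')\,u_{(1)}u'\bigr)(h_{(1)}h')^\circ$, whose class in $\EN$ is exactly $\delta$ of the product $(\varphi\otimes u\otimes h)(\varphi'\otimes u'\otimes h')$ as read off from the multiplication law of $\zDf$. The compatibilities with products inside $A_q$, inside $\Uf(\Gg)$, and inside $U_q(\Gh)$ (this last again using commutativity of $U_q(\Gh)$), and $\delta(1\otimes1\otimes1)=\overline{1}$, are immediate; hence $\delta$ is a homomorphism of $\Lambda$-graded $\BF$-algebras.

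The one genuinely non-formal step is the case $y=S(f_i)$ of (i): it is where one leaves $E_R$ for $\overline{E}_R$, and the point is to recognise that after the $\rac$-expansion the outcome is a right multiple of $S(f_i)^\circ$, hence in $J$. A related subtlety worth flagging is that $h\in U_q(\Gh)$ need not be $\ad$-finite, so the commutation of $h^\circ$ with $\Psi_{RL}(\Uf(\Gg))$ invoked in (ii) is not an instance of Lemma~\ref{lem:fELR}(ii); it must be derived from \eqref{eq:rac5} as above.
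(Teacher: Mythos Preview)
Your proof is correct and follows essentially the same route as the paper's. The only organizational difference is in (i): the paper first invokes Lemma~\ref{lem:sGamma*D}(i) to reduce to the two separate claims $\overline{\Psi_{RL}(\varphi u)}\in\EN(\lambda)$ (which already holds in $E_R$ before passing to the quotient) and $\overline{h^\circ}\in\EN(0)$, whereas you keep the product together and check the $\rac$-condition on algebra generators $k_\mu$ and $S(f_i)$; the underlying computations are the same. Your remark that the commutation $h^\circ\Psi_{RL}(u')=\Psi_{RL}(u')h^\circ$ for arbitrary $h\in U_q(\Gh)$ is not literally an instance of Lemma~\ref{lem:fELR}(ii) and should instead be read off from \eqref{eq:rac5} is well taken and makes that step more transparent than the paper's citation.
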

\begin{proof}
(i)
By Lemma \ref{lem:sGamma*D} (i) 
it is sufficient to show 
$\overline{\Psi_{RL}(\varphi u)}
\in
\overline{E}_R^{N_q}(\lambda)$ and 
$\overline{h^\circ}\in
\overline{E}_R^{N_q}(0)$.
Hence we have only to show 
\[
\Psi_{RL}(\varphi u)\rac y
=
\tchi_\lambda(y)\Psi_{RL}(\varphi u),
\qquad
\overline{h^\circ\rac y}
=
\varepsilon(y)\overline{h^\circ}
\]
for $y\in U_q(\Gb)$.
The first formula follows from 
$\Psi_{RL}(\varphi u)=\varphi\Psi_{RL}(u)$ and \eqref{eq:rac4}, \eqref{eq:rac5}, \eqref{eq:rac6}.
Set $J'=\Ker(\varepsilon:\tU_q(\Gn)\to\BF)$.
To show the second formula it is sufficient to show  
\[
\ad(z)(h)\in  U_q(\Gb)J'
\qquad(h\in U_q(\Gh), z\in J').
\]
We may assume $h=k_\lambda$ for $\lambda\in\Lambda$ and $z\in \tU_q(\Gn)_{-\gamma}$ for 
$\gamma\in Q^+\setminus\{0\}$.
Then we have
\begin{align*}
\ad(z)(h)=&
\sum_{(z)}z_{(0)}k_\lambda(Sz_{(1)})
\in
\sum_{(z)}\varepsilon(z_{(1)})z_{(0)}k_\lambda+U_q(\Gb)J'
=zk_\lambda+U_q(\Gb)J'
\\
=&
q^{(\lambda,\gamma)}
k_\lambda z+J'=J'.
\end{align*}

(ii) By (i) the linear map \eqref{eq:delta} respects the $\Lambda$-grading.
Hence it is sufficient to  show that  \eqref{eq:delta}  is a homomorphism of $\BF$-algebras.
The restriction of \eqref{eq:delta} to the 
subalgebra ${A_q}\otimes \Uf(\Gg)\otimes 1$ of $\zDf$ is a homomorphism of $\BF$-algebras by 
Lemma \ref{lem:sGamma*D} (i).
Hence it is sufficient to show 
\[
h^\circ\Psi_{RL}(\varphi u)=\sum_{(h)}
\chi_\lambda(h_{(0)})\Psi_{RL}(\varphi u)h_{(1)}^\circ
\qquad
(h\in U_q(\Gh),
\varphi\in {A_q}(\lambda), 
u\in \Uf(\Gg))
\]
in $E_R$.
This easily follows from
\eqref{eq:relER} and Lemma \ref{lem:fELR} (ii).
\end{proof}
We define a $\Lambda$-graded $\BF$-algebra 
$\iDf$ by
\begin{equation}
\label{eq:iDf}
\iDf=\Image(\zDf\to\overline{E}_R^{N_q}).
\end{equation}
We obtain a surjective homomorphism
\begin{equation}
\label{eq:ztoi}
p':\zDf\to \iDf
\end{equation}
and an injective homomorphism
\begin{equation}
\label{eq:itoE}
\jmath:\iDf
\to
\overline{E}_R^{N_q}
\end{equation}
of $\Lambda$-graded $\BF$-algebras.

Note that $\zDf$ is regarded as an object of $\Mod_\Lambda^\eq({A_q})$ by the ${A_q}$-module structure 
given by
\[
\psi(\varphi\otimes u\otimes h)
=
\psi\varphi\otimes u\otimes h
\quad(\psi, \varphi\in {A_q}, u\in \Uf(\Gg), h\in U_q(\Gh))
\]
and the left integrable 
$U_q(\Gg)$-module structure  given by
\[
x(\varphi\otimes u\otimes h)
=
\sum_{(x)}
x_{(0)}\cdot\varphi\otimes
\ad(x_{(1)})(u)\otimes h
\quad(x\in U_q(\Gg), \varphi\in {A_q}, u\in \Uf(\Gg), h\in U_q(\Gh)),
\]
for which 
$p:\zDf\to\Df$ in
\eqref{eq:ztoD} is a morphism in 
$\Mod_\Lambda^\eq({A_q})$
(see \ref{subsec:ad-on-D}).
Note also that
$\zDf\to\EN$ in 
\eqref{eq:delta} is a morphism in $\Mod_\Lambda^\eq({A_q})$ 
(see \ref{subsec:ad-on-E})
by \eqref{eq:lac2}, \eqref{eq:lac4}, \eqref{eq:lac3},
and hence $\iDf$ is a subobject of $\EN$ in $\Mod_\Lambda^\eq({A_q})$.

\begin{proposition}
\label{prop:Disom}
The surjective homomorphisms 
$p:\zDf\to\Df$ in
\eqref{eq:ztoD} and 
$p':\zDf\to\iDf$ in \eqref{eq:ztoi}  induce a surjective homomorphism
\begin{equation}
\label{eq:itoD}
\pi:\iDf\to \Df
\end{equation}
of $\Lambda$-graded $\BF$-algebras.
\end{proposition}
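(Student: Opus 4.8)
The plan is to obtain $\pi$ by showing that $\Ker p'\subseteq\Ker p$, where $p:\zDf\to\Df$ is the surjection \eqref{eq:ztoD} and $p':\zDf\to\iDf$ the surjection \eqref{eq:ztoi}; granting this inclusion, $\pi$ is the induced homomorphism $\iDf=\zDf/\Ker p'\twoheadrightarrow\zDf/\Ker p=\Df$ characterised by $\pi\circ p'=p$, and it is automatically a surjective homomorphism of $\Lambda$-graded $\BF$-algebras since $p$ and $p'$ are (and $p'$ is surjective and degree preserving). To compare the kernels I would compare the two algebra homomorphisms $\zDf\xrightarrow{p}\Df\hookrightarrow\End_\BF({A_q})$ and $\zDf\xrightarrow{p'}\iDf\hookrightarrow\EN\to\End_\BF({A_q})$, where the last arrow is the action of $\EN$ on ${A_q}=O(G_q)^{N_q}$ coming from \eqref{eq:AinEN}. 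Since $\Df$ is by construction a subalgebra of $\End_\BF({A_q})$, the kernel of the first composite is exactly $\Ker p$; hence it suffices to prove that the two composites agree.

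Next I would unwind the second composite on a generator. For $\varphi\in{A_q}$, $u\in\Uf(\Gg)$, $h\in U_q(\Gh)$ we have $p'(\varphi\otimes u\otimes h)=\overline{\Psi_{RL}(\varphi u)\,h^\circ}$ by \eqref{eq:delta}, and by Lemma \ref{lem:sGamma*D} (ii) its action on $\psi\in{A_q}=O(G_q)^{N_q}$ is nothing but the $E_R$-action \eqref{eq:ERO} of $\Psi_{RL}(\varphi u)\,h^\circ$ on $\psi\in O(G_q)$. Using $h^\circ\cdot\psi=\psi\cdot h$ and the factorisation $\Psi_{RL}(\varphi u)=\varphi\,\Psi_{RL}(u)$ (already used before \eqref{eq:iDf}), this action equals $\varphi\cdot\bigl(\Psi_{RL}(u)\cdot(\psi\cdot h)\bigr)$. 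On the other hand $p(\varphi\otimes u\otimes h)=\ell_\varphi\deru_u\sigma_h$ sends $\psi$ to $\varphi\cdot\bigl(u\cdot(\psi\cdot h)\bigr)$. Thus the two composites agree once one establishes
\[
\Psi_{RL}(u)\cdot\xi=u\cdot\xi\qquad(u\in\Uf(\Gg),\ \xi\in O(G_q)),
\]
with the left-hand side the $E_R$-action \eqref{eq:ERO} and the right-hand side the left $U_q(\Gg)$-action on $O(G_q)$.

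This identity is the only genuine computation, and it is where I expect the (mild) difficulty to lie. Writing $\ad(w)(u)=\sum_r\langle\varphi_r,w\rangle u_r$ with $u_r\in\Uf(\Gg)$, $\varphi_r\in O(G_q)$, one has $\Psi_{RL}(u)=\sum_r\varphi_r u_r^\circ$, hence $\Psi_{RL}(u)\cdot\xi=\sum_r\varphi_r(\xi\cdot u_r)$. Evaluating on $z\in U_q(\Gg)$ and using \eqref{eq:bimod}, $\ad(z_{(0)})(u)=\sum z_{(0)}u\,S(z_{(1)})$, coassociativity, and the antipode axiom $\sum S(z_{(1)})z_{(2)}=\varepsilon(z_{(1)})$ applied to the last two legs of $\Delta^{(2)}(z)$, I would compute
\begin{align*}
\langle\Psi_{RL}(u)\cdot\xi,\,z\rangle
&=\sum_{(z)}\sum_r\langle\varphi_r,z_{(0)}\rangle\,\langle\xi,\,u_r z_{(1)}\rangle
=\sum_{(z)}\bigl\langle\xi,\ \ad(z_{(0)})(u)\cdot z_{(1)}\bigr\rangle
\\
&=\sum_{(z)}\langle\xi,\ z_{(0)}u\,S(z_{(1)})z_{(2)}\rangle
=\sum_{(z)}\varepsilon(z_{(1)})\,\langle\xi,\,z_{(0)}u\rangle
=\langle\xi,\,zu\rangle=\langle u\cdot\xi,\,z\rangle.
\end{align*}
This gives the identity, hence the equality of the two composites, hence $\Ker p'\subseteq\Ker p$, and $\pi$ is obtained as above. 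The only pitfall is the routine Sweedler bookkeeping together with keeping the two actions \eqref{eq:ELO} and \eqref{eq:ERO} straight; there is no conceptual obstacle, the content being simply that the ``$\Proj$-picture'' and the ``pull-back-picture'' realisations of $\varphi\otimes u\otimes h$ as an operator on ${A_q}$ coincide.
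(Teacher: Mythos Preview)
Your proof is correct and follows the same approach as the paper's. Both arguments obtain $\pi$ by comparing the two actions of $\zDf$ on ${A_q}$ (one via $p$ and the embedding $\Df\hookrightarrow\End_\BF({A_q})$, the other via $p'$ and the $\EN$-module structure \eqref{eq:AinEN}); the paper simply asserts these actions coincide ``by Lemma \ref{lem:sGamma*D} (ii)'', while you supply the explicit Hopf-algebra computation $\Psi_{RL}(u)\cdot\xi=u\cdot\xi$ that the paper leaves implicit.
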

\begin{proof}
By \eqref{eq:AinEN} 
we have an action of $\iDf$ on ${A_q}$.
By Lemma \ref{lem:sGamma*D} (ii) the actions of $\zDf$ on ${A_q}$ given by $p'$ and $p$ coincide.
Since $\Df$ was defined as a subalgebra of $\End_\BF({A_q})$, we obtain the desired result.
\end{proof}

Note that $\pi:\iDf\to \Df$ is a morphism in  both  $\Mod_\Lambda(\zDf)$ and $\Mod_\Lambda^\eq({A_q})$.
Applying $\tGamma\circ\varpi$ to 
$\pi$ we obtain a morphism
\begin{equation}
\label{eq:main1}
\pi_*:\tGamma(\varpi \iDf)
\to
\tGamma(\varpi D)
\end{equation}
in both  $\Mod_\Lambda(\zDf)$ and 
$\Mod_\Lambda^\eq({A_q})$.

By Corollary \ref{cor:equivalence} (i), (ii) 
we have
\[
\tGamma(\varpi\overline{E}^{N_q}_R)
\cong
\overline{E}^{N_q}_R.
\]
Therefore, applying $\tGamma\circ \varpi$ 
to $\jmath:\iDf\to\EN$ we obtain a morphism
\begin{equation}
\label{eq:main2}
\jmath_*:\tGamma(\varpi \iDf)
\to
\EN
\end{equation}
in both $\Mod_\Lambda(\zDf)$ and 
$\Mod_\Lambda^\eq({A_q})$.

We will prove Proposition \ref{prop:main} by showing that $\pi_*$ and $\jmath_*$ are
 isomorphisms of $\Lambda$-graded $\BF$-algebras.
 \subsection{}
 We first establish the following weaker fact.
\begin{proposition}
\label{prop:Disom2}
The homomorphisms $\pi:\iDf\to \Df$ and $\jmath:\iDf\to \EN$ induce isomorphisms
\[
\varpi(\iDf)\cong\varpi(\Df),
\qquad
\varpi(\iDf)
\cong
\varpi(\overline{E}_R^{N_q})
\]
in $\Mod(\CO_{\CB_q})$.
\end{proposition}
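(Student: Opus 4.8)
The plan is to transport both comparisons to the pull‑back side, where the two rings are directly comparable, and there to replace $\varpi$ by $O(G_q)\otimes_{A_q}(\bullet)$. By Proposition~\ref{prop:AZBK} and the commutativity of \eqref{eq:AZBK1}, the functor $O(G_q)\otimes_{A_q}(\bullet)$ is, up to the equivalence $p_q^*$, the functor $\varpi$; hence for a morphism $f$ in $\Mod_\Lambda({A_q})$ one has that $\varpi f$ is an isomorphism in $\Mod(\CO_{\CB_q})$ if and only if $O(G_q)\otimes_{A_q}f$ is an isomorphism in $\Mod(\CO_{G_q},B_q)$. Since $O(G_q)\otimes_{A_q}(\bullet)$ is exact (Corollary~\ref{cor:equivalence}(iii)) and $O(G_q)\otimes_{A_q}\EN\cong\overline{E}_R$ by \eqref{eq:OENE}, it therefore suffices to show that
$O(G_q)\otimes_{A_q}\jmath\colon O(G_q)\otimes_{A_q}\iDf\to\overline{E}_R$
is surjective (it is injective, since $\jmath$ is) and that
$O(G_q)\otimes_{A_q}\pi\colon O(G_q)\otimes_{A_q}\iDf\to O(G_q)\otimes_{A_q}\Df$
is injective (it is surjective, since $\pi$ is).

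\textbf{The map $\jmath$.}
This is the easy half. Because $p'\colon\zDf\to\iDf$ in \eqref{eq:ztoi} is surjective and $p'(\varphi\otimes u\otimes h)=\varphi\cdot\overline{\Psi_{RL}(u)h^{\circ}}$, with $\varphi\in{A_q}$ acting through \eqref{eq:AEN} and $\Psi_{RL}(\varphi u)=\varphi\,\Psi_{RL}(u)$ from the definition of $\Psi_{RL}$, the left ${A_q}$‑module $\iDf$ is generated by the elements $\overline{\Psi_{RL}(u)h^{\circ}}$ with $u\in\Uf(\Gg)$, $h\in U_q(\Gh)$. Hence the image of $O(G_q)\otimes_{A_q}\jmath$ is the left $O(G_q)$‑submodule of $\overline{E}_R=E_R/J$ generated by the classes of the $\Psi_{RL}(u)h^{\circ}$, that is, the image of $O(G_q)\cdot\Psi_{RL}(\Uf(\Gg))\cdot U_q(\Gh)^{\circ}\subseteq E_R$. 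Now $\Psi_{RL}\colon E_{L,f}\to E_{R,f}$ is bijective (Lemma~\ref{lem:fELR}) and left $O(G_q)$‑linear, so $O(G_q)\cdot\Psi_{RL}(\Uf(\Gg))=E_{R,f}=O(G_q)\otimes\Uf(\Gg)^{\circ}$; multiplying on the right by $U_q(\Gh)^{\circ}$ and computing in $E_R$ with the multiplication of Lemma~\ref{lem:E}(ii), together with Lemma~\ref{lem:gen} (so that $U_q(\Gh)\Uf(\Gg)=U_q(\Gg)$), gives $E_{R,f}\cdot U_q(\Gh)^{\circ}=O(G_q)\otimes U_q(\Gg)^{\circ}=E_R$. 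Thus the image of $O(G_q)\otimes_{A_q}\jmath$ is all of $\overline{E}_R$, and $O(G_q)\otimes_{A_q}\jmath$, hence $\varpi\jmath$, is an isomorphism.

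\textbf{The map $\pi$.}
Here the key point is that $\EN$ acts faithfully on ${A_q}=O(G_q)^{N_q}$: granting this, the composite $\iDf\hookrightarrow\EN\to\End_\BF({A_q})$ is injective with image $\Df$ by Proposition~\ref{prop:Disom}, so $\Ker\pi=\iDf\cap\Ker(\EN\to\End_\BF({A_q}))=0$ and $O(G_q)\otimes_{A_q}\pi$ is injective, completing the proof. To establish the faithfulness one argues chart by chart: since $\varpi$ detects isomorphisms after applying all the $j_{w,q}^{*}$ ($w\in W$) by Proposition~\ref{prop:patch}, and since $S_w$ is an Ore set in $\zDf$, $\Df$ and $\iDf$ (Proposition~\ref{prop:OreD} and the discussion following \eqref{eq:ztoD}), it is enough to show that $S_w^{-1}p$ and $S_w^{-1}p'$ have the same kernel for each $w$; over the affine cell $U_w=B\backslash BB^{+}w$ both $S_w^{-1}\Df$ and $S_w^{-1}\iDf$ are identified with the ring of twisted differential operators on $U_w$, and in either presentation the kernel of the surjection from $S_w^{-1}\zDf$ is generated by the central relations $\deru_z-\sigma_{\Xi(z)}$, $z\in Z(U_q(\Gg))$ (cf.\ \eqref{eq:zsigma}). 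This localized computation, which parallels \cite[\S5]{T0}, is the main obstacle; the rest of the argument above is formal.
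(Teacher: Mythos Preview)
Your treatment of $\jmath$ is correct and coincides with the paper's argument: exactness of $O(G_q)\otimes_{A_q}(\bullet)$ gives injectivity, and surjectivity reduces via \eqref{eq:OENE} and $\Psi_{RL}$ to Lemma~\ref{lem:gen}.

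The argument for $\pi$, however, has a genuine gap. You first assert that it suffices to show $\EN$ acts faithfully on $A_q$ (which would make $\pi$ itself injective, not merely $\varpi\pi$), but then you do not prove this; instead you switch to a chart-by-chart strategy and claim that in each localization $S_w^{-1}\zDf$ the kernels of $S_w^{-1}p$ and $S_w^{-1}p'$ coincide, both being generated by $\deru_z-\sigma_{\Xi(z)}$ for $z\in Z(U_q(\Gg))$. This last claim is neither proved nor available in the paper or in \cite{T0}; identifying the kernel of $\zDf\to\Df$ even locally is a substantial computation, and your assertion that ``both $S_w^{-1}\Df$ and $S_w^{-1}\iDf$ are identified with the ring of twisted differential operators on $U_w$'' presupposes the very comparison you are trying to establish. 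You also have not verified that $S_w$ is Ore in $\iDf$ or in $\EN$. As you yourself say, this localized computation is ``the main obstacle'', and it is not overcome.

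The paper avoids all of this by exploiting a structure you have not used: $\pi$ and $\jmath$ are morphisms in $\Mod_\Lambda^{\eq}(A_q)$ (see \ref{subsec:ad-on-D}, \ref{subsec:ad-on-E}, and the remarks after \eqref{eq:itoE}). By Proposition~\ref{prop:iota} the functor $\BF\otimes_{A_q}(\bullet)=\iota_q^*\circ\varpi$ is an equivalence on equivariant objects, so it suffices to check that $\BF\otimes_{A_q}\pi$ is bijective. Surjectivity is automatic; injectivity follows from the commutative square with $\EN\to\End_\BF(A_q)$, since after applying $\BF\otimes_{A_q}(\bullet)$ one has $\BF\otimes_{A_q}\EN\cong U_q(\Gb^+)^{\op}$ by \eqref{eq:OENE}, and the resulting map $U_q(\Gb^+)^{\op}\to\Hom_\BF(A_q,\BF)$, $x^{\circ}\mapsto[\varphi\mapsto\langle\varphi,x\rangle]$, is shown to be injective by an elementary highest-weight argument. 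The equivariance is the missing idea: it replaces your unfinished localized ring computation by a single linear-algebra check at the identity fiber.
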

\begin{proof}
Let us first show that $\pi$ induces 
$\varpi(\iDf)\cong\varpi({\Df})$.
Note that $\iDf\to {\Df}$ is a morphism in
$\Mod^\eq_\Lambda({A_q})$.
Hence by Proposition \ref{prop:iota} 
 it is sufficient to verify that 
$\BF\otimes_{A_q}\iDf\to \BF\otimes_{A_q}{\Df}$ is bijective.
Consider the commutative diagram 
\[
\vcenter{
\xymatrix{
\iDf
\ar[r]^{\jmath}
\ar@{->>}[d]_{\pi}
&
\overline{E}_R^{N_q}
\ar[d]^{a}
\\
{\Df}
\ar@{^{(}->}[r]
&
\End_\BF({A_q})
}}
\]
of left ${A_q}$-modules (see \eqref{eq:AinEN}).
Since we have a canonical homomorphism 
$\BF\otimes_{A_q}\End_\BF({A_q})\to\Hom_\BF({A_q},\BF)$, we obtain a commutative diagram
\[
\vcenter{
\xymatrix{
\BF\otimes_{A_q}\iDf
\ar[r]^{\overline{\jmath}}
\ar[d]_{\overline{\pi}}
&
\BF\otimes_{A_q}\overline{E}_R^{N_q}
\ar[d]^{\overline{a}}
\\
\BF\otimes_{A_q}{\Df}
\ar[r]
&
\Hom_\BF({A_q},\BF).
}}
\]
By Proposition \ref{prop:iota} 
$\overline{\pi}$ is surjective and $\overline{\jmath}$ is injective.
To show that $\overline{\pi}$ is injective, it is sufficient to show that $\overline{a}$ is injective.
By 
\eqref{eq:OENE}
we have
\[
\BF\otimes_{A_q}
\overline{E}_R^{N_q}
\cong
\BF\otimes_{O(G_q)}\overline{E}_R
\cong
U_q(\Gg)^\op/
\sum_{y\in\tU_q(\Gn)}
U_q(\Gg)^\op(y^\circ-\varepsilon(y))
\cong U_q(\Gb^+)^\op.
\]
Under this identification $\overline{a}$ is given by
\begin{equation}
\label{eq:b{A_q}F}
U_q(\Gb^+)^\op
\ni
x^\circ\mapsto[\varphi\mapsto\langle\varphi,x\rangle]
\in\Hom_\BF({A_q},\BF).
\end{equation}
Assume that $x^\circ$ for $x\in U_q(\Gb^+)$ belongs to the kernel of \eqref{eq:b{A_q}F}.
By \eqref{eq:Alam2} we have $v^*_\lambda x=0$ for any $\lambda\in\Lambda^+$, where $v^*_\lambda$ is a non-zero element of $V^*(\lambda)_\lambda$.
Write $x=\sum_{r}h_rx_r$ for $h_r\in U_q(\Gh)$, $x_r\in U_q(\Gn^+)$.
We may assume that $\{x_r\}$ is linearly independent.
Then we have 
$\sum_r\chi_\lambda(h_r)v^*_\lambda x_r=0$ for any $\lambda\in\Lambda^+$.
Denote by $\Lambda^+_{>N}$ the set of $\lambda\in\Lambda^+$ satisfying 
$\langle\lambda,\alpha_i^\vee\rangle>N$ for $\forall i\in I$.
It is well-known that there exists some $N>0$ such that 
for any $\lambda\in \Lambda^+_{>N}$  the subset $\{v^*_\lambda x_r\}$ of $V^*(\lambda)$ is  linearly independent
(see for example the proof of \cite[Proposition 5.11]{Jan}).
Hence we have $\chi_\lambda(h_r)=0$ for any $r$ and $\lambda\in\Lambda_{>N}^+$.
From this we easily obtain $h_r=0$ for any $r$.
The injectivity of \eqref{eq:b{A_q}F} is verified.
We have shown $\varpi({\Df})\cong\varpi(\iDf)$.

Let us show that $\jmath$ induces
$\varpi(\iDf)\cong
\varpi(\overline{E}_R^{N_q})$.
Since $\varpi$ is an exact functor, we have 
the injectivity of $\varpi(\iDf)\to
\varpi(\overline{E}_R^{N_q})$.
To show the surjectivity it is sufficient to 
prove that 
$\varpi(\zDf)\to
\varpi(\overline{E}_R^{N_q})$ 
is surjective.
This is equivalent to 
the surjectivity of 
$O(G_q)\otimes_{A_q}\zDf\to
O(G_q)\otimes_{A_q}\overline{E}_R^{N_q}$
by \eqref{eq:AZBK1}.
By \eqref{eq:OENE}
it is sufficient to show the surjectivity of the linear map 
\[
O(G_q)\otimes \Uf(\Gg)\otimes U_q(\Gh)\to E_R
\qquad
(\varphi\otimes u\otimes h\mapsto
\Phi_{RL}(\varphi u)h^\circ).
\]
This follows from Lemma \ref{lem:gen}.
\end{proof}

\subsection{}
Now we complete the proof of Theorem \ref{thm:main} (i).
By Proposition \ref{prop:Disom2} 
the linear maps
\[
\pi_*: \tGamma(\varpi \iDf)\to\tGamma(\varpi {\Df}), 
\qquad
\jmath_*: \tGamma(\varpi \iDf)\to\EN
\]
are isomorphism of $\Lambda$-graded $\zDf$-modules.
Hence we have an isomorphism
\[
F=\jmath_*\circ\pi_*^{-1}:
\tGamma(\varpi {\Df})\simto\EN
\]
of $\Lambda$-graded $\zDf$-module.
We need to show that $F$ is an isomorphism of 
$\Lambda$-graded $\BF$-algebras.

Recall that the multiplication of $\tGamma(\varpi {\Df})$ is defined by the identification
\begin{equation}
\label{eq:mult1}
\tGamma(\varpi {\Df})
\cong
\left(\bigoplus_{\lambda\in\Lambda}
 \Hom^\gr_{{\Df}}
 (\tGamma(\varpi {\Df}),\tGamma(\varpi {\Df})[\lambda])
 \right)^\op.
 \end{equation}
We have also an
isomorphism
\begin{equation}
\label{eq:mult2}
\EN\cong
\left(
\bigoplus_{\lambda\in\Lambda}
\Hom_{\EN}^\gr(\EN,\EN[\lambda])
\right)^\op
\end{equation}
of $\Lambda$-graded $\BF$-algebras.
Recall also 
 \[
 \Hom^\gr_{{\Df}}
 (\tGamma(\varpi {\Df}),\tGamma(\varpi {\Df})[\lambda])
 \cong
  \Hom^\gr_{{\Df}}
 ({\Df},\tGamma(\varpi {\Df})[\lambda])
 \]
for $\lambda\in\Lambda$ (see the proof of Lemma \ref{lem:gpM}).
Hence
for $z\in\tGamma(\varpi {\Df})(\lambda)$ 
the corresponding element
$f_z\in 
\Hom_{{\Df}}^\gr(\tGamma(\varpi {\Df}),\tGamma(\varpi {\Df})[\lambda])$ via 
 \eqref{eq:mult1}
 is uniquely characterized by $f_z(1)=z$.
Similarly, for $z'\in\EN(\lambda)$ 
the corresponding element
$g_{z'}\in 
\Hom_{\EN}^\gr(\EN,\EN[\lambda])$
via \eqref{eq:mult2} is uniquely characterized by $g_{z'}(1)=z'$.
The linear isomorphism
\begin{equation}
\label{eq:mult3}
\left(\bigoplus_{\lambda\in\Lambda}
 \Hom^\gr_{{\Df}}
 (\tGamma(\varpi {\Df}),\tGamma(\varpi {\Df})[\lambda])
 \right)^\op
\simto\left(
\bigoplus_{\lambda\in\Lambda}
\Hom_{\EN}^\gr(\EN,\EN[\lambda])
\right)^\op
\end{equation}
corresponding to $F$ sends 
$f_z\in \Hom^\gr_{{\Df}}(\tGamma(\varpi {\Df}),\tGamma(\varpi {\Df})[\lambda])$ 
for $z\in \tGamma(\varpi {\Df})(\lambda)$ to
$g_{F(z)}\in \Hom_{\EN}^\gr(\EN,\EN[\lambda])$.
We need to show that \eqref{eq:mult3} respects the multiplication defined by the composition of morphisms.
For that it is sufficient to show that the following diagram is commutative:
\[
\vcenter{
\xymatrix{
\tGamma(\varpi {\Df})
\ar[r]^{f_z}
\ar[d]_{F}^\cong
&
\tGamma(\varpi {\Df})[\lambda]
\ar[d]^{F}_\cong
\\
\EN
\ar[r]_{g_{F(z)}}
&
\EN[\lambda].
}
}
\]
By $F(1)=1$ we have $(F\circ f_z)(1)=F(z)=(g_{F(z)}\circ F)(1)$.
Hence  we obtain
$F\circ f_z=g_{F(z)}\circ F$
by
\begin{align*}
 &\Hom^\gr_{\zDf}
 (\tGamma(\varpi {\Df}),\EN[\lambda])
 \cong
 \Hom^\gr_{\zDf}
 (\tGamma(\varpi {\Df}),\tGamma(\varpi\EN)[\lambda])
 \\
  \cong&
 \Hom_{\tDDzf_{\CB_q}}
 (\varpi\tGamma(\varpi {\Df}),\varpi\EN[\lambda])
 \cong
  \Hom_{\tDDzf_{\CB_q}}
 (\varpi {\Df},\varpi\EN[\lambda])
 \\
\cong&
\Hom^\gr_{\zDf}
({\Df},\tGamma(\varpi\EN)[\lambda])
\cong
\Hom^\gr_{\zDf}
({\Df},\EN[\lambda])
\\
\hookrightarrow&\EN(\lambda).
 \end{align*}

We have shown Theorem \ref{thm:main}(i).
Then Theorem \ref{thm:main} (ii) is clear in view of \eqref{eq:equiv2catlam} and \eqref{eq:equiv1lam}.
The proof of 
Theorem \ref{thm:main} is now complete.
\begin{remark}
By the same arguments we can show the analogue of Theorem \ref{thm:main} in the setting
where $q$ is specialized to a root of unity (see \cite{T1}, \cite{T2}, \cite{T3}, \cite{T4}).
Details are omitted.
\end{remark}

\section{The Beilinson-Bernstein correspondence}
\subsection{}
\label{subsec:lam}
For $\lambda\in\Lambda$ we define 
$\Df_\lambda
\in\Mod_{\Lambda}(\Df,\lambda)$ by 
\begin{equation}
\Df_\lambda
=\Df/\sum_{h\in U_q(\Gh)}\Df(\sigma_h-\chi_\lambda(h)).
\end{equation}
Since $\sigma_h$ for $h\in U_q(\Gh)$ belongs to the center of $\Df(0)$,
we have a natural $\BF$-algebra structure of 
\[
\Df_\lambda(0)=
\Df(0)/\sum_{h\in U_q(\Gh)}\Df(0)(\sigma_h-\chi_\lambda(h)).
\]

It is easily seen that 
the functor
\begin{equation}
\label{eq:Gammalam1}
\Gamma:\Mod(\DD^f_{\CB_q,\lambda})
\to \Mod(\Df_\lambda(0))
\qquad
(\GM\mapsto \tGamma(\GM)(0))
\end{equation}
is right adjoint to 
\begin{equation}
\label{eq:Loclam}
\Loc:
\Mod(\Df_\lambda(0))\to
\Mod(\DD^f_{\CB_q,\lambda})
\qquad
(M\mapsto \varpi(\Df\otimes_{\Df(0)}M)).
\end{equation}

Similarly to Lemma \ref{lem:gpM} we have an isomorphism
\begin{equation}
\label{eq:GammaDlam1}
\Gamma(\GM)
\cong\Hom_{\Df}^\gr(\tGamma(\varpi \Df_\lambda),
\tGamma(\GM))
\qquad(\GM\in\Mod_\Lambda(\Df,\lambda))
\end{equation}
of $\BF$-modules.
In particular, we have
\begin{equation}
\label{eq:GammaDlam2}
\Gamma(\varpi \Df_\lambda)
\cong\End_{\Df}^\gr(\tGamma(\varpi \Df_\lambda)).
\end{equation}
The right side of \eqref{eq:GammaDlam2} is naturally an $\BF$-algebra by the composition of morphisms.
We regard $\Gamma(\varpi \Df_\lambda)$ as an $\BF$-algebra by the identification
\begin{equation}
\label{eq:GammaDlam3}
\Gamma(\varpi \Df_\lambda)
=\End_{\Df}^\gr(\tGamma(\varpi \Df_\lambda))^\op.
\end{equation}
The the natural homomorphism
\begin{equation}
\label{eq:DlamGamma}
\Df_\lambda(0)\to \Gamma(\varpi \Df_\lambda)
\end{equation}
of $\Df(0)$-modules turns out to be 
a homomorphism of $\BF$-algebras.

Since the right side of \eqref{eq:GammaDlam1} is naturally a right module over the right side of  \eqref{eq:GammaDlam2}
we see that 
$\Gamma(\GM)$ for 
$\GM\in\Mod(\DD^f_{\CB_q,\lambda})$ is a left 
$\Gamma(\varpi \Df_\lambda)$-module.
This gives a functor
\begin{equation}
\label{eq:Gammalam2}
\Gamma:\Mod(\DD^f_{\CB_q,\lambda})
\to \Mod(\Gamma(\varpi \Df_\lambda)),
\end{equation}
which is a lift of \eqref{eq:Gammalam1}.

\subsection{}
For $\lambda\in\Lambda$ we define an ideal $I_\lambda$ of $\Uf(\Gg)$ by
\begin{equation}
I_\lambda=
\sum_{z\in Z(U_q(\Gg))}
\Uf(\Gg)(z-\chi_\lambda(\Xi(z))).
\end{equation}
The following result is established in the course of the proof of \cite[Theorem 5.2]{T0}. 
\begin{theorem}
\label{thm:isom}
For $\lambda\in\Lambda$
we have isomorphisms
\[
\Uf(\Gg)/I_\lambda
\cong
\Df_\lambda(0)
\cong
\Gamma(\varpi \Df_\lambda)
\]
of algebras.
Here, the first isomorphism is induced by 
$\Uf(\Gg)\to \Df(0)$ ($u\mapsto\deru_u$), and 
the second isomorphism is \eqref{eq:DlamGamma}.
\end{theorem}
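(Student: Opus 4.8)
The plan is to deduce both isomorphisms from a single computation of global sections. Concretely, I will show that the natural algebra homomorphism $\Uf(\Gg)/I_\lambda\to\Df_\lambda(0)$, $u\mapsto\overline{\deru_u}$, is surjective and that its composite with \eqref{eq:DlamGamma},
\[
\Uf(\Gg)/I_\lambda\longrightarrow\Df_\lambda(0)\longrightarrow\Gamma(\varpi\Df_\lambda)=\tGamma(\varpi\Df_\lambda)(0),
\]
is an isomorphism of $\BF$-algebras. Granting these, the composite being bijective forces the first map to be injective (hence, being also surjective, bijective) and then \eqref{eq:DlamGamma} to be bijective as well; both stated isomorphisms follow.

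The surjection $\Uf(\Gg)/I_\lambda\to\Df_\lambda(0)$ is elementary. By the presentation $p:\zDf={A_q}\otimes\Uf(\Gg)\otimes U_q(\Gh)\to\Df$ of \eqref{eq:ztoD} and ${A_q}(0)=\BF1$, the ring $\Df(0)$ is spanned by the operators $\deru_u\sigma_h$; since $\sigma_h$ is central in $\Df(0)$ by \eqref{eq:rel3} and \eqref{eq:rel5}, the left ideal $\sum_h\Df(0)(\sigma_h-\chi_\lambda(h))$ is two-sided, $\Df_\lambda(0)$ is a ring, and $\overline{\sigma_h}=\chi_\lambda(h)\overline 1$ there, so $u\mapsto\overline{\deru_u}$ is onto. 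Well-definedness through $I_\lambda$ is exactly \eqref{eq:zsigma}: since $Z(U_q(\Gg))\subset\Uf(\Gg)$ and $\deru_z=\sigma_{\Xi(z)}$, one gets $\overline{\deru_z}=\chi_\lambda(\Xi(z))\overline 1$, hence $\overline{\deru_{u(z-\chi_\lambda(\Xi(z)))}}=0$ for all $u\in\Uf(\Gg)$.

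The heart of the matter is that the composite $\Uf(\Gg)/I_\lambda\to\Gamma(\varpi\Df_\lambda)$ is bijective, i.e. the quantized Beilinson--Bernstein computation of the sections of the ring object $\varpi\Df_\lambda$; I would carry it out as in the course of the proof of \cite[Theorem 5.2]{T0}, whose arguments for $D$ transfer verbatim to $\Df$. For injectivity it suffices to exhibit one $\Uf(\Gg)/I_\lambda$-module among the $\Gamma(\GM)$, $\GM\in\Mod(\DD^f_{\CB_q,\lambda})$, that is faithful: the quantum $\DD$-module supported at $\{B_qe\}$ (the base point of the big cell), built through $\iota_q^*$ of Proposition \ref{prop:iota}, has global sections a (dual) quantum Verma module of highest weight $\lambda$, whose annihilator in $\Uf(\Gg)$ is precisely $I_\lambda$; as a sanity check the Borel--Weil object $\varpi({A_q}[\lambda])$ gives $\Gamma(\varpi({A_q}[\lambda]))\cong{A_q}(\lambda)\cong V(\lambda)$ by Proposition \ref{prop:BW} and Proposition \ref{prop:Gamma-compati}. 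For surjectivity I would use the affine cover of $\CB_q$ afforded by the Ore localizations $S_w^{-1}\Df$ (Proposition \ref{prop:OreD}): by the patching Proposition \ref{prop:patch} a section of $\varpi\Df_\lambda$ is pinned down by its images in the rings $(S_w^{-1}\Df_\lambda)(0)$, $w\in W$, each of which one identifies explicitly on the corresponding chart, and one checks that every compatible family of local sections descends to a single $\deru_u$, $u\in\Uf(\Gg)$; here the patching property \eqref{eq:Apatch} of ${A_q}$ is used essentially. Alternatively, one may transport the whole statement through the equivalence of Theorem \ref{thm:main}(ii) into the pull-back picture, where it becomes a computation of $(\bullet)^{B_q}$ applied to the central-character-$\lambda$ version of $\overline{E}_R$.

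The step I expect to be the main obstacle is this last surjectivity: in contrast to the classical case there is no symbol/associated-graded calculus to shortcut it, so the argument must genuinely be run chart by chart through the $\Proj$-localizations $S_w^{-1}\Df$ and then glued --- which is exactly why Proposition \ref{prop:OreD} and Proposition \ref{prop:patch} enter.
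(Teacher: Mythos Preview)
Your overall strategy---show that $\Uf(\Gg)/I_\lambda\to\Df_\lambda(0)$ is surjective and that its composite with \eqref{eq:DlamGamma} is bijective---coincides with the paper's, and your handling of the surjectivity of $\alpha$ and of injectivity via the Verma-module annihilator is essentially what the paper (following \cite{T0}) does. The divergence, and the genuine gap, is in the surjectivity of $\Uf(\Gg)/I_\lambda\to\Gamma(\varpi\Df_\lambda)$.

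Your proposed chart-by-chart patching is not what \cite{T0} does, and I do not see how to carry it through: the local rings $(S_w^{-1}\Df_\lambda)(0)$ are large (quantum twisted differential operators on affine cells), and the assertion that ``every compatible family of local sections descends to a single $\deru_u$'' is precisely the content of the theorem, not a step one can simply ``check''. The paper instead extends the chain by one more algebra,
\[
\Uf(\Gg)/I_\lambda\xrightarrow{\alpha}\Df_\lambda(0)\xrightarrow{\beta}\Gamma(\varpi\Df_\lambda)\xrightarrow{\gamma}\End(T_r(\lambda))^\op,
\]
with $T_r(\lambda)$ the right Verma module and $\gamma$ injective. Joseph's annihilator result makes $\gamma\beta\alpha$ injective (this is your injectivity step). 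Surjectivity then comes from Joseph's quantum analogue of the Conze--Berline--Duflo theorem \cite[8.3.9]{JoB}: the image of $\gamma\beta\alpha$ is exactly the $\ad$-finite part $(\End(T_r(\lambda))^\op)^f$. Independently, the equivariant structure on $\Df_\lambda$ coming from the adjoint action \eqref{eq:ad-on-D} forces the image of $\gamma\beta$ to lie inside that same $\ad$-finite part. Since $\gamma$ is injective and the two images coincide, $\beta\alpha$ is surjective. This use of Conze--Berline--Duflo together with the $\ad$-finiteness constraint on $\Gamma(\varpi\Df_\lambda)$ is the missing key idea; without it your surjectivity argument remains a plan rather than a proof. (Your alternative via Theorem~\ref{thm:main}(ii) would require an independent computation of $\overline{E}_{R,\lambda}^{B_q}$, which in the paper is \emph{deduced} from the present theorem, so that route does not shortcut the difficulty.)
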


We briefly give a sketch of the proof of Theorem \ref{thm:isom}. 
As in \cite[5.7]{T0} we have a sequence 
\[
\Uf(\Gg)/I_\lambda
\xrightarrow{\alpha}
\Df_\lambda(0)
\xrightarrow{\beta}
\Gamma(\varpi \Df_\lambda)
\xrightarrow{\gamma}
\End(T_r(\lambda))^\op
\]
of algebra homomorphisms such that $\alpha$ is surjective and $\gamma$ is injective.
Here, $T_r(\lambda)$ is the right Verma module with 
highest weight $\lambda$ given by 
\[
T_r(\lambda)=
U_q(\Gg)/
\sum_{y\in U_q(\Gb)}(y-\tchi_\lambda(y))U_q(\Gg).
\]
By Joseph's result \cite[8.3.9]{JoB} on Verma module annihilators we see that $\gamma\circ\beta\circ\alpha$ is injective.
Hence $\alpha$ is an isomorphism and $\beta$ is injective.
Define the adjoint action of $U_q(\Gg)$  on 
$\End(T_r(\lambda))^\op$  by
\[
\ad(u)(a)=\sum_{(u)}F(u_{(0)})aF(Su_{(1)})
\qquad (u\in U_q(\Gg), a\in \End(T_r(\lambda))^\op),
\]
where 
$F:U_q(\Gg)\to \End(T_r(\lambda))^\op$ 
is the canonical algebra homomorphism.
By Joseph's result \cite[Theorem 8.3.9 (ii)]{JoB}, 
which is a quantum analogue of a Theorem of Conze-Berline and Duflo, 
the image of $\gamma\circ\beta\circ\alpha$ coincides with 
\[
(\End(T_r(\lambda))^\op)^f
=
\{
a\in
\End(T_r(\lambda))^\op\mid 
\dim\ad(U_q(\Gg))(a)<\infty\}.
\]
We see easily that $\Df_\lambda(0)\in\Mod_\Lambda^\eq({A_q})$ 
with respect to the adjoint action of $U_q(\Gg)$ on $\Df_\lambda$
induced by \eqref{eq:ad-on-D}, and hence 
the image of $\gamma\circ\beta$ is contained in $(\End(T_r(\lambda))^\op)^f$.
Therefore $\beta$ is also an isomorphism. 
\begin{remark}
\begin{itemize}
\item[(i)]
The proof of Theorem \ref{thm:isom}, which is sketched above, is analogous to that of the corresponding fact for Lie algebras 
given by Borho and Brylinsky \cite{BoB}.
\item[(ii)]
Theorem \ref{thm:isom} also follows from \cite[Theorem 8.1]{T3}.
It is also shown there that $R^i\Gamma(\varpi \Df_\lambda)=0$ for $i>0$.
The proof of \cite[Theorem 8.1]{T3} is rather complicated in order to deal with the situation where $q$ is specialized to a root of unity whose order is a power of a prime.
The arguments in \cite{T3} becomes much simpler
if we restrict ourselves to the case where $q$ is transcendental.
Details are left to the readers.
\end{itemize}
\end{remark}

As in \cite{T0} we have the following Beilinson-Bernstein type equivalence of categories.
\begin{theorem}
\label{thm:BB}
For $\lambda\in\Lambda^+$ the functor 
\[
\Gamma:
\Mod(\DD^f_{\CB_q,\lambda})\to \Mod(\Uf(\Gg)/I_\lambda)
\]
in \eqref{eq:Gammalam1} gives an equivalence of categories, whose inverse is
\[
\Loc:\Mod(\Uf(\Gg)/I_\lambda)
\to
\Mod(\DD^f_{\CB_q,\lambda})
\]
in \eqref{eq:Loclam}.
\end{theorem}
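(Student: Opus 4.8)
The plan is to follow the proof of \cite[Theorem~5.2]{T0}, which establishes the corresponding equivalence with the ring $D$ in place of $\Df$; restricting to the $\ad$-finite part introduces no new difficulty, the only extra external input being the algebra identification of Theorem \ref{thm:isom}. Since $\Loc$ of \eqref{eq:Loclam} is left adjoint to $\Gamma$ of \eqref{eq:Gammalam1}, it suffices to prove that the unit $M\to\Gamma(\Loc(M))$ and the counit $\Loc(\Gamma(\GM))\to\GM$ are isomorphisms for all $M\in\Mod(\Uf(\Gg)/I_\lambda)$ and $\GM\in\Mod(\DD^f_{\CB_q,\lambda})$; note that Theorem \ref{thm:isom} identifies the codomain $\Mod(\Df_\lambda(0))$ of \eqref{eq:Gammalam1} with $\Mod(\Uf(\Gg)/I_\lambda)$.

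\emph{Global sections of $\Df_\lambda$.} First I would record that, by Theorem \ref{thm:isom}, the canonical map $\Uf(\Gg)/I_\lambda\to\Gamma(\varpi\Df_\lambda)$ is an isomorphism of algebras compatible with the left module structures. Since $\Df_\lambda\cong\Df\otimes_{\Df(0)}\bigl(\Uf(\Gg)/I_\lambda\bigr)$, we get $\Loc(\Gamma(\varpi\Df_\lambda))\cong\varpi\Df_\lambda$, and by the triangle identities this isomorphism is the counit at $\varpi\Df_\lambda$; similarly the unit at the rank-one free module $M=\Uf(\Gg)/I_\lambda$ is the isomorphism of Theorem \ref{thm:isom}.

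\emph{Exactness.} The main step is to show that $\Gamma$ is exact, equivalently that $\varpi\Df_\lambda$ is a projective object of $\Mod(\DD^f_{\CB_q,\lambda})$. This is done exactly as in \cite{T0}: given a short exact sequence in $\Mod(\DD^f_{\CB_q,\lambda})$, one applies the localization functors $j^*_{w,q}$ (well defined on $\Df$-modules since $S_w$ satisfies the Ore conditions in $\Df$, Proposition \ref{prop:OreD}), observes that on each affine chart $S_w^{-1}\Df(0)$ the global section functor is exact, and patches the resulting long exact cohomology sequences by Proposition \ref{prop:patch}. As $\Df\subset D$ and these charts are the $\ad$-finite analogues of those of \cite{T0}, the argument carries over verbatim. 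One also notes, as in \cite{T0}, that $\varpi\Df_\lambda$ is small, i.e.\ $\Gamma$ commutes with arbitrary direct sums, using that $\Df$ inherits the noetherian property from ${A_q}$ (Proposition \ref{prop:A}).

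\emph{Conservativity and conclusion.} Since $\lambda\in\Lambda^+$ the weight $\lambda+\rho$ is regular, so the central character $\chi_\lambda\circ\Xi$ corresponds to a single free $W\circ$-orbit; then, as in \cite{T0}, using the right Verma module $T_r(\lambda)$ and Joseph's description of its annihilator (\cite{JoB}, cf.\ the sketch following Theorem \ref{thm:isom}) one shows $\Gamma(\GM)=0\Rightarrow\GM=0$. Thus $\varpi\Df_\lambda$ is a small projective generator of the Grothendieck category $\Mod(\DD^f_{\CB_q,\lambda})$, and the standard Morita argument shows that $\Gamma=\Hom_{\DD^f_{\CB_q,\lambda}}(\varpi\Df_\lambda,-)$ is an equivalence onto $\Mod\bigl(\End_{\Df}^\gr(\tGamma(\varpi\Df_\lambda))^\op\bigr)=\Mod(\Gamma(\varpi\Df_\lambda))=\Mod(\Uf(\Gg)/I_\lambda)$, with quasi-inverse $\Loc$. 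I expect the exactness in the third paragraph to be the main obstacle, since that is where the geometry of the $\Proj$-construction and the patching property are actually used; the remaining steps are formal once Theorem \ref{thm:isom} and the regularity of $\lambda+\rho$ are available.
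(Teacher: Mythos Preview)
Your overall architecture---prove $\Gamma$ exact, prove $\Gamma$ conservative, then invoke Morita for the small projective generator $\varpi\Df_\lambda$---matches the Beilinson--Bernstein strategy the paper defers to. But your description of the \emph{exactness} step is wrong, and this is a genuine gap, not a cosmetic one.

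You write that exactness follows by restricting to the affine charts $j_{w,q}^*$ and ``patching the resulting long exact cohomology sequences by Proposition~\ref{prop:patch}.'' Proposition~\ref{prop:patch} only says that a morphism in $\Mod(\CO_{\CB_q})$ is an isomorphism as soon as it is one on every chart; it tells you nothing about vanishing of higher $\Gamma$. Knowing that each $S_w^{-1}(\bullet)$ is exact is just the statement that the charts are ``affine''; already in ordinary algebraic geometry this does not force $\Gamma$ on the whole variety to be exact. What the original \cite{BB} argument (and its quantum version in \cite{T0}, \cite{LR}) actually does is the \emph{splitting trick}: for $\GM\in\Mod(\DD^f_{\CB_q,\lambda})$ and $\mu\in\Lambda^+$, one tensors by the finite-dimensional module $V(\mu)$, obtaining a $\DD$-module with a filtration whose graded pieces are shifts $\GM[\nu]$ for $\nu$ running over the weights of $V(\mu)$. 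Because $\lambda+\rho$ is regular dominant, the central characters of the subquotients are pairwise distinct and the piece $\GM[\mu]$ splits off as a direct summand. This embeds $\GM$ as a summand of a module of the form $(\GM[-\mu]$ tensored by a trivial $\DD$-module$)$, which is $\Gamma$-acyclic once $\mu$ is large enough (ampleness of the twist, ultimately \eqref{eq:Apatch}). That is the mechanism producing both $R^i\Gamma=0$ for $i>0$ and conservativity.

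Your conservativity paragraph has the same problem: the Verma annihilator result of Joseph is what feeds into Theorem~\ref{thm:isom}, not into $\Gamma(\GM)=0\Rightarrow\GM=0$. Conservativity again comes from the splitting trick: if $\Gamma(\GM)=0$, then by the summand argument $\Gamma(\GM[\mu])=0$ for every $\mu\in\Lambda^+$, hence $\tGamma(\GM)=0$ and so $\GM\cong\varpi\tGamma(\GM)=0$. Once you replace your chart-patching sketch by this argument, the rest of your outline (Theorem~\ref{thm:isom} for the endomorphism ring, smallness, Morita) is fine and is exactly the route the paper has in mind.
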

The original proof in \cite{BB} of the Beilinson-Bernstein correspondence for Lie algebras works for our case once Theorem \ref{thm:isom} is established.
We omit the details here (see \cite{LR}, \cite{T0}).

\subsection{}
We now describe the corresponding results for
$\Mod(\DD_{G_q}, B_q,\lambda)$.

Define a left $E_R$-module $\overline{E}_{R,\lambda}$ by
\begin{equation}
\overline{E}_{R,\lambda}=
E_R/J_\lambda,
\qquad
J_\lambda=\sum_{y\in U_q(\Gb)}E_R(y^\circ-\tchi_\lambda(y))
\end{equation}
The right $U_q(\Gg)$-module structure \eqref{eq:rac1} of $E_R$ induces an integrable right $U_q(\Gb)$-module structure of $\overline{E}_{R,\lambda}$, 
by which $\overline{E}_{R,\lambda}$ turns out to be an object of $\Mod(\DD_{G_q},B_q,\lambda)$.
Similarly to Proposition \ref{prop:sGammaD},
 we have an isomorphism
\begin{equation}
\label{eq:MBE}
M^{B_q}\cong
\Hom_{\DD_{G_q},B_q}(\overline{E}_{R,\lambda},M)
\qquad(M\in\Mod(\DD_{G_q},B_q,\lambda))
\end{equation}
of $\BF$-modules,
where 
$(\bullet)^{B_q}$ in the left side is taken regarding $M$ as an object of 
$\Mod(\CO_{G_q},B_q)$.
Especially, we obtain
\begin{equation}
\label{eq:E-multB}
\overline{E}_{R,\lambda}^{B_q}
\cong
\End_{\DD_{G_q},B_q}(\overline{E}_{R,\lambda}).
\end{equation}
The right side of \eqref{eq:E-multB} is endowed with an algebra structure by 
 the composition of morphisms.
We regard $\overline{E}_{R,\lambda}^{B_q}$ as  an algebra by
\begin{equation}
\label{eq:E-multB2}
\overline{E}_{R,\lambda}^{B_q}
=
\End_{\DD_{G_q},B_q}(\overline{E}_{R,\lambda})^\op.
\end{equation}
Then the right side of \eqref{eq:MBE} 
is naturally a right module over the right side of 
\eqref{eq:E-multB} by the composition of morphisms.
We have obtained a left exact functor
\begin{equation}
\label{eq:sGamma*Dlam2}
(\bullet)^{B_q}:\Mod(\DD_{G_q},B_q,\lambda)
\to 
\Mod(
\overline{E}_{R,\lambda}^{B_q}).
\end{equation}
\begin{proposition}
\begin{itemize}
\item[(i)]
The object $\varpi \Df_\lambda\in \Mod(\DD^f_{\CB_q,\lambda})$ corresponds to
$\overline{E}_{R,\lambda}\in \Mod(\DD_{G_q}, B_q,\lambda)$
under the category equivalence
\[
\Mod(\DD^f_{\CB_q,\lambda})\cong
\Mod(\DD_{G_q}, B_q,\lambda)
\]
of Theorem \ref{thm:main}.
\item[(ii)]
We have a canonical isomorphism
\[
\Gamma(\varpi \Df_\lambda)
\cong
\overline{E}_{R,\lambda}^{B_q}
\]
of $\BF$-algebras.
\item[(iii)]
We have the following commutative diagram of functors:
\[
\vcenter{
\xymatrix{
\Mod(\DD_{G_q}, B_q,\lambda)
\ar[r]^-\cong
\ar[d]_{(\bullet)^{B_q}}
&
\Mod(\DD^f_{\CB_q,\lambda})
\ar[d]^{\Gamma}
   \\
\Mod(
\overline{E}_{R,\lambda}^{B_q})
\ar[r]
&
\Mod(\Df_\lambda(0)),
}}
\]
where the lower horizontal arrow is given by the algebra homomorphism $\Df_\lambda(0)\to \overline{E}_{R,\lambda}^{B_q}$.
\end{itemize}
\end{proposition}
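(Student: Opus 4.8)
The plan is to deduce all three assertions from the exact equivalence $\Psi\colon\Mod(\tDDf_{\CB_q})\simto\Mod(\DD_{G_q},B_q)$ of Theorem \ref{thm:main}(i), which by Theorem \ref{thm:main}(ii) restricts to the equivalence of category (i). Two features of $\Psi$ are needed, both read off from its construction via \eqref{eq:equiv2catlam}, \eqref{eq:equiv1lam} and Proposition \ref{prop:main}: it sends $\varpi\Df$ to $\overline{E}_R$; and, under $\Psi$, the endomorphism of $\varpi\Df$ given by right multiplication by $\sigma_h$ ($h\in U_q(\Gh)$) corresponds to the endomorphism of $\overline{E}_R$ given by right multiplication by $\overline{h^\circ}$, for the right $\EN$-module structure of Lemma \ref{lem:EEN}. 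The second point reduces to the fact that the canonical algebra homomorphism $\Df\to\tGamma(\varpi\Df)\cong\EN$ of Proposition \ref{prop:main} carries $\sigma_h$ to $\overline{h^\circ}$ --- which is visible from the proof of that proposition, since $1\otimes1\otimes h\in\zDf$ maps to $\sigma_h$ under $p$ and to $\overline{h^\circ}$ under \eqref{eq:delta} --- together with naturality of the unit of the adjunction $\varpi\dashv\tGamma$ (so that $\tGamma\varpi$ carries right multiplication on the regular $\Df$-module to right multiplication on the regular $\EN$-module, by evaluating at $1$) and right exactness of $\overline{E}_R\otimes_{\EN}(\bullet)$.

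Granting this, I would prove (i) as follows. Since $\varpi$ is exact, $\varpi\Df_\lambda=\varpi\Df\big/\sum_{h\in U_q(\Gh)}\varpi\Df\,(\sigma_h-\chi_\lambda(h))$ in $\Mod(\tDDf_{\CB_q})$, so applying $\Psi$ gives $\Psi(\varpi\Df_\lambda)\cong\overline{E}_R/\!\sum_{h\in U_q(\Gh)}\overline{E}_R(\overline{h^\circ}-\chi_\lambda(h))$. It then remains to identify the right-hand side with $\overline{E}_{R,\lambda}=E_R/J_\lambda$, i.e.\ to check the identity of left ideals $J_\lambda=J+\sum_{h\in U_q(\Gh)}E_R(h^\circ-\chi_\lambda(h))$, where $J=\sum_{y\in\tU_q(\Gn)}E_R(y^\circ-\varepsilon(y))$. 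One inclusion is clear from $\tchi_\lambda|_{U_q(\Gh)}=\chi_\lambda$ and $\tchi_\lambda|_{\tU_q(\Gn)}=\varepsilon$ (using $\tU_q(\Gn)_{-\gamma}=k_\gamma U_q(\Gn)_{-\gamma}$); the reverse inclusion follows by writing an element of $U_q(\Gb)=\tU_q(\Gn)\,U_q(\Gh)$ as a sum of products $y'h$ with $y'\in\tU_q(\Gn)$, $h\in U_q(\Gh)$ and expanding in $E_R$ the identity $(y'h)^\circ-\tchi_\lambda(y'h)=h^\circ\big((y')^\circ-\varepsilon(y')\big)+\varepsilon(y')\big(h^\circ-\chi_\lambda(h)\big)$, the first term lying in the left ideal $J$ (Lemma \ref{lem:Eb}(i)). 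Since $\Psi$ restricts to the equivalence of (i) by Theorem \ref{thm:main}(ii), this identifies $\varpi\Df_\lambda$ with $\overline{E}_{R,\lambda}$.

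For (ii) and (iii) I would argue purely formally. The equivalence $\Psi$ and the isomorphism of (i) give, for each $\GM\in\Mod(\DD^f_{\CB_q,\lambda})$ with $M=\Psi(\GM)$, a natural isomorphism $\Hom_{\DD^f_{\CB_q,\lambda}}(\varpi\Df_\lambda,\GM)\cong\Hom_{\DD_{G_q},B_q}(\overline{E}_{R,\lambda},M)$ compatible with the actions of the two endomorphism algebras. By the adjunction $\varpi\dashv\tGamma$ and $\varpi\circ\tGamma\simto\Id$ (so $\tGamma$ is fully faithful) the left-hand side is $\Gamma(\GM)$ and, for $\GM=\varpi\Df_\lambda$, the endomorphism algebra is $\End^\gr_{\Df}(\tGamma(\varpi\Df_\lambda))$; by \eqref{eq:MBE}--\eqref{eq:E-multB} the right-hand side is $M^{B_q}$, resp.\ $\End_{\DD_{G_q},B_q}(\overline{E}_{R,\lambda})$. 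Passing to opposite algebras and comparing with \eqref{eq:GammaDlam3} and \eqref{eq:E-multB2} yields the algebra isomorphism $\Gamma(\varpi\Df_\lambda)\cong\overline{E}_{R,\lambda}^{B_q}$ of (ii); since the identification of (i) is compatible with the right $\Df_\lambda(0)$-action on $\Df_\lambda$ coming from multiplication, it intertwines \eqref{eq:DlamGamma} with the algebra map $\Df_\lambda(0)\to\overline{E}_{R,\lambda}^{B_q}$. Running a general $M$ through the same chain shows $\Gamma\circ\Psi^{-1}\cong(\bullet)^{B_q}$ as functors with values in modules over $\Gamma(\varpi\Df_\lambda)\cong\overline{E}_{R,\lambda}^{B_q}$, and restricting scalars along $\Df_\lambda(0)$ gives the commutative square of (iii).

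The step I expect to be the main obstacle is making the second property of $\Psi$ above fully precise: one must unwind the construction of $\tGamma(\varpi\Df)\cong\EN$ in the proof of Proposition \ref{prop:main} (through $\zDf$, $\iDf$ and the maps $p$, \eqref{eq:delta}) carefully enough to be certain that $\sigma_h$ is carried to $\overline{h^\circ}$, and then track this through the two quotient-category equivalences \eqref{eq:equiv2catlam}, \eqref{eq:equiv1lam}. Once that is in hand, (i) is a one-line cokernel computation plus the elementary ideal identity above, and (ii)--(iii) are routine manipulations of representable functors and their module structures.
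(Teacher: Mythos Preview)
Your proposal is correct and follows essentially the same route as the paper: trace $\varpi\Df_\lambda$ through the chain of equivalences to land on $\EN/\sum_h\EN(\overline{h^\circ}-\chi_\lambda(h))$, then apply $\overline{E}_R\otimes_{\EN}(\bullet)$ and use the ideal identity $J_\lambda=J+\sum_hE_R(h^\circ-\chi_\lambda(h))$ to identify the result with $\overline{E}_{R,\lambda}$; parts (ii) and (iii) then follow formally from (i). You supply more detail than the paper (which dispatches the correspondence $\sigma_h\mapsto\overline{h^\circ}$ and the ideal computation with ``it is easily seen'' and declares (ii), (iii) ``clear from (i)''), but the underlying argument is the same.
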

\begin{proof}
Recall that the  category equivalence in (i) is
given by the composite of 
\begin{align*}
&\Mod(\DD^f_{\CB_q,\lambda})
\\
=&
\Mod_\Lambda(\Df,\lambda)
/(\Mod_\Lambda(\Df,\lambda)\cap\Tor_{\Lambda^+}({A_q}))
\\
\cong&
\Mod_\Lambda(\tGamma(\varpi \Df),\lambda)
/(\Mod_\Lambda(\tGamma(\varpi \Df),\lambda)\cap\Tor_{\Lambda^+}({A_q}))
\\
\cong&
\Mod_\Lambda(\EN,\lambda)
/(\Mod_\Lambda(\EN,\lambda)\cap\Tor_{\Lambda^+}({A_q}))
\\
\cong&
\Mod(\DD_{G_q}, B_q,\lambda).
\end{align*}
It is easily seen that the object of 
$\Mod_\Lambda(\EN,\lambda)
/(\Mod_\Lambda(\EN,\lambda)\cap\Tor_{\Lambda^+}({A_q}))$
corresponding to $\varpi \Df_\lambda\in
\Mod(\DD^f_{\CB_q,\lambda})$ is given by 
\[
\EN/
\sum_{h\in U_q(\Gh)}\EN(\overline{h^\circ}-\chi_\lambda(h))
\in
\Mod_\Lambda(\EN,\lambda).
\]
Hence (i) follows from
\[
\overline{E}_R
\otimes_{\EN}
\left(
\EN/
\sum_{h\in U_q(\Gh)}\EN(\overline{h^\circ}-\chi_\lambda(h))
\right)
\cong
\overline{E}_R/
\sum_{h\in U_q(\Gh)}\overline{E}_R
(\overline{h^\circ}-\chi_\lambda(h))
\cong\overline{E}_{R,\lambda}.
\]
Then (ii) and (iii) are clear from (i).
\end{proof}

By Theorem \ref{thm:isom} and 
Theorem \ref{thm:BB} we recover the following results  in \cite{BK}.
\begin{theorem}
\label{thm:isom2}
For $\lambda\in\Lambda$
we have an isomorphism
\[
\Uf(\Gg)/I_\lambda
\cong
\overline{E}^{B_q}_{R,\lambda}
\]
of algebras.
\end{theorem}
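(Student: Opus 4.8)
The plan is to obtain the stated isomorphism simply by composing two algebra isomorphisms that have already been established. By Theorem~\ref{thm:isom} there is an algebra isomorphism $\Uf(\Gg)/I_\lambda\simto\Gamma(\varpi \Df_\lambda)$ for every $\lambda\in\Lambda$ (the composite of the two isomorphisms displayed there, the first induced by $u\mapsto\deru_u$ and the second being \eqref{eq:DlamGamma}). On the other hand, part~(ii) of the Proposition immediately preceding the statement gives a canonical algebra isomorphism $\Gamma(\varpi \Df_\lambda)\cong\overline{E}_{R,\lambda}^{B_q}$. Composing these two yields the asserted isomorphism $\Uf(\Gg)/I_\lambda\cong\overline{E}_{R,\lambda}^{B_q}$ of $\BF$-algebras.

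The only point that genuinely uses the main results of the paper is part~(ii) of that Proposition, so I would recall why it holds. By part~(i) of the same Proposition, the category equivalence $\Mod(\DD^f_{\CB_q,\lambda})\cong\Mod(\DD_{G_q},B_q,\lambda)$ of Theorem~\ref{thm:main}(ii) sends the distinguished object $\varpi \Df_\lambda$ to $\overline{E}_{R,\lambda}$; hence it identifies their endomorphism algebras, and after unwinding the definitions \eqref{eq:GammaDlam2}, \eqref{eq:GammaDlam3}, \eqref{eq:E-multB}, \eqref{eq:E-multB2} this is precisely the algebra isomorphism $\Gamma(\varpi \Df_\lambda)\cong\overline{E}_{R,\lambda}^{B_q}$. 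Part~(iii) of the Proposition then records that the resulting identification $\Uf(\Gg)/I_\lambda\cong\overline{E}_{R,\lambda}^{B_q}$ is compatible with the global-section functor $\Gamma$ and with $(\bullet)^{B_q}$, so that the statement is recovered in its module-theoretic form, i.e.\ exactly as in \cite{BK}.

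There is essentially no obstacle at this stage: the substantive inputs --- Theorem~\ref{thm:isom}, which rests on Joseph's description of Verma module annihilators sketched above, and Theorem~\ref{thm:main}, the comparison of the two categories of $\DD$-modules --- have already been proved, and the present statement is formal given them. If one moreover wants the Beilinson--Bernstein equivalence on the pull-back side, one can transport Theorem~\ref{thm:BB} through the equivalence of Theorem~\ref{thm:main}(ii): using part~(iii) of the Proposition to see that $\Gamma$ goes over to $(\bullet)^{B_q}$, one obtains for $\lambda\in\Lambda^+$ an equivalence $\Mod(\DD_{G_q},B_q,\lambda)\cong\Mod(\Uf(\Gg)/I_\lambda)$ with no additional work, recovering the corresponding result of \cite{BK}.
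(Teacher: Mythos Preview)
Your proposal is correct and follows exactly the paper's approach: the paper simply writes ``By Theorem~\ref{thm:isom} and Theorem~\ref{thm:BB} we recover the following results in \cite{BK}'' before stating Theorem~\ref{thm:isom2} (and Theorem~\ref{thm:BB2}), so the argument is precisely the composition of Theorem~\ref{thm:isom} with part~(ii) of the preceding Proposition, as you have written out.
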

\begin{theorem}
\label{thm:BB2}
For $\lambda\in\Lambda^+$ the functor
\[
(\bullet)^{B_q}:
\Mod(\DD_{G_q},B_q,\lambda)\to \Mod(\Uf(\Gg)/I_\lambda)
\]
gives an equivalence of categories.
\end{theorem}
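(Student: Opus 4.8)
The plan is to deduce Theorem~\ref{thm:BB2} by transporting the Beilinson--Bernstein equivalence of Theorem~\ref{thm:BB} along the category equivalence of Theorem~\ref{thm:main}~(ii), using the preceding Proposition to identify the functors and the algebras involved. No new input beyond what has already been assembled should be required; the argument is essentially a diagram chase.

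First I would record, from part~(ii) of the preceding Proposition, that the algebra homomorphism $\Df_\lambda(0)\to\overline{E}^{B_q}_{R,\lambda}$ occurring in part~(iii) of that Proposition is the composite of the canonical map \eqref{eq:DlamGamma}, namely $\Df_\lambda(0)\to\Gamma(\varpi\Df_\lambda)$, with the canonical isomorphism $\Gamma(\varpi\Df_\lambda)\cong\overline{E}^{B_q}_{R,\lambda}$. By Theorem~\ref{thm:isom} the map \eqref{eq:DlamGamma} is itself an isomorphism, fitting into the chain $\Uf(\Gg)/I_\lambda\simto\Df_\lambda(0)\simto\Gamma(\varpi\Df_\lambda)$. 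Hence $\Df_\lambda(0)\to\overline{E}^{B_q}_{R,\lambda}$ is an isomorphism of algebras --- which is exactly the content of Theorem~\ref{thm:isom2} --- and therefore the bottom horizontal functor $\Mod(\overline{E}^{B_q}_{R,\lambda})\to\Mod(\Df_\lambda(0))$ in the diagram of part~(iii) is an equivalence of categories, both sides being identified with $\Mod(\Uf(\Gg)/I_\lambda)$.

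Next I would invoke the commutative square in part~(iii) of the preceding Proposition. Its top horizontal functor $\Mod(\DD_{G_q},B_q,\lambda)\simto\Mod(\DD^f_{\CB_q,\lambda})$ is an equivalence by Theorem~\ref{thm:main}~(ii); its bottom horizontal functor is an equivalence by the previous paragraph; and its right vertical functor is $\Gamma\colon\Mod(\DD^f_{\CB_q,\lambda})\to\Mod(\Df_\lambda(0))\cong\Mod(\Uf(\Gg)/I_\lambda)$, which is an equivalence by Theorem~\ref{thm:BB} since $\lambda\in\Lambda^+$. It then follows formally that the left vertical functor, namely $(\bullet)^{B_q}\colon\Mod(\DD_{G_q},B_q,\lambda)\to\Mod(\overline{E}^{B_q}_{R,\lambda})\cong\Mod(\Uf(\Gg)/I_\lambda)$, is an equivalence. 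An explicit quasi-inverse is obtained by transporting $\Loc$ of \eqref{eq:Loclam} through the equivalences of Theorem~\ref{thm:main}~(ii) and Theorem~\ref{thm:isom2}.

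The only point needing care is bookkeeping: one must check that the square in part~(iii) commutes as a diagram of functors (not merely on objects), and that the identification $\overline{E}^{B_q}_{R,\lambda}\cong\Uf(\Gg)/I_\lambda$ used above agrees with the one furnished by Theorem~\ref{thm:isom2}, so that the category $\Mod(\Uf(\Gg)/I_\lambda)$ appearing in the statement is unambiguous. I do not anticipate a genuine obstacle here: all the substantive input --- in particular the identification of $\Df_\lambda(0)$ with $\Uf(\Gg)/I_\lambda$ resting on Joseph's theorem on Verma module annihilators, which underlies Theorem~\ref{thm:isom} --- has already been established, and what remains is to assemble Theorems~\ref{thm:main}, \ref{thm:isom}, \ref{thm:isom2}, \ref{thm:BB} together with the preceding Proposition.
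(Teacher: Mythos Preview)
Your proposal is correct and follows essentially the same route as the paper: the paper simply states that Theorems~\ref{thm:isom2} and~\ref{thm:BB2} are recovered from Theorems~\ref{thm:isom} and~\ref{thm:BB}, and your argument is precisely the unpacking of that one-line deduction via the commutative square in part~(iii) of the preceding Proposition together with the equivalence of Theorem~\ref{thm:main}~(ii).
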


\bibliographystyle{unsrt}

\begin{thebibliography}{99}

\bibitem{APW}
Andersen, H. H., 
Polo, P., 
Wen, K.: 
Representations of quantum algebras. 
Invent. Math. {\textbf{104}} (1991), no. 1, 1--59.

\bibitem{AZ}
Artin, M., 
Zhang, J.: 
Noncommutative projective schemes. 
Adv. Math. {\textbf{109}} (1994), 228--287.

\bibitem{BK}
Backelin, E., 
Kremnizer, K.:
Quantum flag varieties, equivariant quantum $\DD$-modules, and localization of quantum groups.
Adv. Math. {\textbf{203}} (2006), 408--429.
\bibitem{BB}
Beilinson, A., 
Bernstein, J.:
Localisation de $\Gg$-modules. 
C. R. Acad. Sci. Paris S\'{e}r. I Math. \textbf{292} (1981), no. 1, 15--18. 


\bibitem{BoB}
Borho, W., Brylinski, J.-L.:
Differential operators on homogeneous spaces. I. 
Invent. Math. \textbf{69} (1982), no. 3, 437--476.


\bibitem{BrK}
 Brylinski, J.-L., 
 Kashiwara, M.: 
 Kazhdan-Lusztig conjecture and holonomic systems. 
 Invent. Math. \textbf{64} (1981), no. 3, 387--410.

\bibitem{GZ}
Gabriel, P., Zisman, M.: 
Calculus of fractions and homotopy theory. 
Ergebnisse der Mathematik und ihrer Grenzgebiete, Band 35 Springer-Verlag New York, Inc., New York 1967 x+168 pp.

\bibitem{Jan}
Jantzen, J.: 
Lectures on quantum groups, 
Graduate Studies in Math., \textbf{6}, 
Amer. Math. Soc., Providence, RI, 1996.


\bibitem{Jo0}
Joseph, A.: 
Faithfully flat embeddings for minimal primitive quotients of quantized enveloping algebras. 
Quantum deformations of algebras and their representations (Ramat-Gan, 1991/1992; Rehovot, 1991/1992), 79--106, Israel Math. Conf. Proc., {\bf7}, Bar-Ilan Univ., Ramat Gan, 1993.


\bibitem{JoB}
 Joseph, A.:
 Quantum groups and their primitive ideals. 
 Ergebnisse der Mathematik und ihrer Grenzgebiete (3) {\bf29}. Springer-Verlag, Berlin, 1995. x+383 pp.


\bibitem{LRD}
Lunts, V., Rosenberg, A.: 
Differential operators on noncommutative rings
Selecta Math. (N.S.) \textbf{3} (1997), no. 3, 335–359.

\bibitem{LR}
Lunts, V., Rosenberg, A.: 
Localization for quantum groups. 
Selecta Math. (N.S.) \textbf{5} (1999), 123--159. 


\bibitem{M}
Manin, Yuri I.:
Topics in noncommutative geometry. 
M. B. Porter Lectures. 
Princeton University Press, Princeton, NJ, 1991. viii+164 pp. 

\bibitem{P}
Popescu, N.: 
Abelian categories with applications to rings and modules. 
London Mathematical Society Monographs, No. 3. 
Academic Press, London-New York, 1973. xii+467 pp. 

\bibitem{R}
 Rosenberg, Alexander L.: 
 Noncommutative algebraic geometry and representations of quantized algebras. 
Mathematics and its Applications, \textbf{330}. 
Kluwer Academic Publishers Group, Dordrecht, 1995. xii+315 pp. 

\bibitem{T0}
Tanisaki, T.: 
The Beilinson-Bernstein correspondence for quantized enveloping algebras. 
Math. Z. \textbf{250} (2005), 299--361.


\bibitem{T1}
Tanisaki, T.:
Differential operators on quantized flag manifolds at roots of unity. 
Adv. Math. \textbf{230} (2012), 2235--2294.


\bibitem{T2}
Tanisaki, T.:
Differential operators on quantized flag manifolds at roots of unity, II. Nagoya Math. J. \textbf{214} (2014), 1--52. 



\bibitem{T3}
Tanisaki, T.:
Differential operators on quantized flag manifolds at roots of unity, III. 
Adv. Math. \textbf{392} (2021), , Paper No. 107990, 51 pp.

\bibitem{T4}
Tanisaki, T.:
Quantized flag manifolds and 
non-restricted modules over quantum groups at roots of unity, 
arXiv 2109.03319.


\bibitem{V}
Ver\"{e}vkin, A.: 
On a noncommutative analogue of the category of coherent sheaves on a projective scheme. 
Algebra and analysis (Tomsk, 1989), 41--53, Amer. Math. Soc. Transl. Ser. 2, \textbf{151}, Amer. Math. Soc., Providence, RI, 1992.

\end{thebibliography}

\end{document}